\definecolor{green}{rgb}{0,0.8,0} % Redefines the color green.
\newtheorem{theorem}{Theorem}[section]
\newtheorem{lemma}[theorem]{Lemma}
\newtheorem{proposition}[theorem]{Proposition}
\theoremstyle{definition}
\theoremstyle{remark}
\newtheorem{remark}[theorem]{Remark}
\numberwithin{equation}{section}
\newcommand{\abs}[1]{\vert#1\vert^2}
\newcommand{\pa}{\partial}
\newcommand{\na}{\nabla}
\newcommand{\del}{\delta}
\newcommand{\Del}{\Delta}
\newcommand{\ep}{\epsilon}
\newcommand{\la}{\lambda}
\newcommand{\om}{\omega}
\newcommand{\Om}{\Omega}
\newcommand{\ps}{\psi}
\newcommand{\bbR}{\mathbb R}
\newcommand{\bbr}{\mathbb R}
\newcommand{\calP}{\mathcal P}
\newcommand{\calU}{\mathcal U}
\newcommand{\ddt}{\frac{d}{dt}}
\newcommand{\vp}{\varphi}
\newcommand{\dvg}{\na \cdot}
\newcommand{\dv}{\na \cdot}
\newcommand{\fn}[1]{\frac{ #1}{N}}
\newcommand{\lr}[1]{\left( #1 \right)}
\newcommand{\ipa}{\pa_y^{i}}
\newcommand{\jpa}{\pa_z^{j}}
\newcommand{\Fn}{\left(\fn{1} \right)}
\newcommand{\llangle}{\langle\!\langle}	% double left angled bracket
\newcommand{\rrangle}{\rangle\!\rangle}	% double right angled bracket
\newcommand{\cp}{\calP}
\newcommand{\cpp}{\calP'}
\newcommand{\cpq}{\calP''}
\newcommand{\np}{N'}
\newcommand{\infz}[1]{\int_{-\infty}^z{#1} dz}
\begin{document}
\title[Stability of traveling waves in Keller-Segel]{Stability of planar traveling waves \\ in a Keller-Segel equation \\
on an infinite   strip domain}%: Title of the article
%\title[]{Transversal stability of Traveling waves \\ in a Keller-Segel equation \\
%{on the surface of a thin cylinder}}%: Title of the article
%\author{Myeongju Chae}%
%
%\author{Kudong Choi}%
%\address{Department of Mathematics, UW Madison, WI}%
%\email{kchoi@math.wisc.edu}%

%\thanks{}%
\subjclass[2010]{92B05, 	35K45}%
\keywords{tumor, angiogenesis, Keller-Segel, stability, traveling wave, strip}

%\date{\today}%
%\dedicatory{}%
%\commby{}%
% ---------
\author{Myeongju Chae}%
\address{Department of Mathematics, Hankyung University, Anseong-si, Gyeonggi-do, Republic of Korea}%
\email{mchae@hknu.ac.kr}
\author{Kyudong Choi}%
\address{Department of Mathematical Sciences, Ulsan National Institute of Science and Technology, Ulsan, Republic of Korea}%
\email{kchoi@unist.ac.kr}
\author{Kyungkeun Kang}%
\address{Department of mathematics, Yonsei University, Seoul, Republic of Korea}%
\email{kkang@yonsei.ac.kr}
\author{Jihoon Lee}%
\address{Department of Mathematics,
Chung-Ang University,
Seoul, Republic of Korea
}%
\email{jhleepde@cau.ac.kr}
\begin{abstract}
A simplified model of the tumor angiogenesis can be described by  a Keller-Segel equation \cite{FrTe,Le,Pe}. The   stability of traveling waves for the one dimensional system has recently been known by \cite{JinLiWa,LiWa}.
In this paper
we consider the equation on the two dimensional 
 domain 
$ (x, y) \in  
 \bbr \times {\mathbf S^{\la}}$ 
for 
a  small parameter $\la>0$
where $ \mathbf S^{\la}$ is the circle of perimeter $\la$. Then the equation allows a planar traveling wave solution of invading types.
We establish the nonlinear stability of the traveling wave solution if the initial perturbation 
is  sufficiently small  in a weighted Sobolev space  without a chemical diffusion. 
When the diffusion is present, we show a linear stability. Lastly, we prove that any solution with our front conditions  eventually becomes planar under certain regularity conditions.
The key ideas are to use the Cole-Hopf transformation  and  to apply 
the Poincar\'{e} inequality   
 to handle with the two dimensional structure.

\end{abstract}
\maketitle
% ----------------------------------------------------------------
\section{Introduction and main theorems}
{The formation of new blood vessels (angiogenesis) is the essential mechanism for tumour progression and metastasis.}
A simplified model of the tumor angiogenesis  can be described by  a Keller-Segel equation \cite{FrTe,Le,Pe}.
In this paper
we consider the equation on the two dimensional cylindrical domain $ (x, y) \in  \Om = \bbr \times [0, \la]$
with a front boundary condition in $x$  and the periodic condition in $y$, both specified later,
\begin{align}\label{KS} \begin{aligned}
\pa_t n - \Del n& = - \na \cdot (n \chi(c) \na c ), \\
\pa_t c - \ep\Del c & = - c^m n
\end{aligned} \end{align}
with $m \ge 1$, where $n(x, y, t) \ge 0 $ denote the density of endothelial cells, $c(x,y, t) \ge 0$ 
stands for 
the concentration of the chemical substance or the protein known as the vascular endothelial  growth factor (VEGF) and $\chi(\cdot): \bbr^+ \to  \bbr^+ $ is a decreasing chemosensitivity  function, 
reflecting that the chemosensitivity is lower for higher concentration of the chemical. 
The system \eqref{KS} includes the both zero chemical diffusion $(\ep=0)$ and non-zero chemical diffusion $(\ep>0)$ cases.\\
\indent 
Endothelial cells forming the linings of the blood vessels are responsible for extending and remodeling the network of 
blood vessels, tissue growth and repair. Tumors or cancerous cells are also dependent on bloods supply by newly  
generated capillaries formed toward them, which process is called the endothelial angiogenesis. 
In the modeling the  endothelial angiogenesis, the biological implication is that the  endothelial cells
behaves as a invasive species, responding to signals produced by the tissue. 
\footnote{{In  Bruce Alberts et al \cite{Ce}, it is summarized as follows: ``Cells that are short of oxygen increase their concentration of certain protein (HIF-1), which stimulates the production of vascular endothelial growth factor (VEGF). VEGF acts on endothelial cells, causing them to proliferate and invade the hypoxic tissue to supply it with new blood vessels."}}
%Hence the propagation has a profile of  a wavefront \comment{reference}. 
Accordingly the system \eqref{KS} is given the front condition at left-right ends such that
\begin{align}\label{nfront}
\lim_{x\to -\infty} n (x, y, t) = n_- >0 ,  \quad  \lim_{x\to \infty} n(x, y, t) = 0,\\
\lim_{x\to -\infty} c (x, y, t) = 0,  \quad  \lim_{x\to \infty} c(x, y, t) = c_+ > 0. \label{cfront}
\end{align}
We choose the $x$-axis by the propagating direction.

\indent
A \textit{planar}  traveling wave solution of \eqref{KS} is a traveling wave solution 
independent of the transversal direction $y$ such that
\begin{align}\label{twave}
 n(x,y,t) =N(x-st), \quad c(x, y, t) = C(x-st)
\end{align}
with  a wave speed $s >0$. From now on, we consider only planar
 traveling waves  
%always assume that 
 $(N, C)$ satisfying the above boundary conditions \eqref{nfront} and \eqref{cfront}, and 
moreover we assume 
\begin{equation}\label{derivative}
N'(\pm \infty) =  C' (\pm \infty) =0.
\end{equation}
(We denote $\displaystyle \lim_{x\to \infty} N(x)$ by  $N(\infty)$ in short.) 
%The profile of $N$ is of a wavefront type which describe the endothelial cell being invasive species.
% In two spatial dimension, 
%this kind of  traveling wave  is called a \textit{planar} type or a \textit{line soliton} in other literatures.
\\ \indent
In this paper we study the  stability of a planar traveling wave solution $(N, C)$ of \eqref{KS}
 for $\ep\geq0$ case  assuming $\chi(c)= c^{-1}$, $m =1$,
\begin{align}\label{eq:main}
 \begin{aligned}
\pa_t n - \Del n& = - \na \cdot \left(n \frac{ \na c}{c} \right), \\
\pa_t c-\ep\Del c  & = - cn, \quad (x, y) \in \bbr\times [0, \la].
\end{aligned} \end{align}
\indent  { We show that the planar traveling wave solution is asymptotically stable for certain small perturbations in a weighted Sobolev space  when $\ep=0$ (Theorem \ref{theoremnc}), and linearly stable when $\ep>0$(Theorem \ref{theoremli}).
 The restriction on $m =1$ is required for  treating the singularity of $ 1/c$ by the Cole-Hopf transformation, as is presented in Section \ref{hopf}. On the other hand, the nonexistence of traveling wave solutions was proved in \cite{Sc} when $m >1$.
 \\
\indent
%The presence  of traveling wave solution can be a \comment{Fill in} in mathematical model describing a biological 
%phenomena. [ Murray ...]. 
The traveling wave solution with an invading front 
%in \eqref{KS} 
can be an evidence of the tumor encapsulation \cite{AC, BT, Sh}.  
The existence of traveling wave solutions for a Keller-Segel model has been initiated by Keller and Segel \cite{KSb} and was followed by many works. For instance, see \cite{Ho1} and the references therein. It is known  that the chemosensitivity function $\chi(\cdot_c)$  needs to be singular as $c$ approaches to zero for a traveling wave solution to exist (\textit{ e.g.} see\cite{KSb, Sc}). The paper \cite{KSb} and many others assume $\chi(c) = c^{-1}$ when studying traveling waves, which choice of  $\chi(c)$  is also adopted when modeling the formation of the vascular network toward cancerous cells  \cite{FrTe,Le,Pe}.}
 {More general $\chi(c)$ other than $1/c$ together with the various production rates are studied in \cite{HoSt}.}

\begin{subsection}{Main Theorems}
  Let us introduce the weight function:
\[ w(x) = 1 +e^{sx}, \quad x\in \bbr\]
and define the Sobolev spaces $H^k$ and $H^k_w$ by their norms: 
\begin{align*}
&\| \varphi\|_{H^k}^2  = \sum_ { i+ j \le k}\int_{\bbr\times [0, \la]} | \pa_x^i \pa_y^j \varphi (x, y) |^2 dxdy, \quad\\
& \| \varphi\|_{H^k_w}^2  = \sum_ { i+ j \le k}\int_{\bbr\times [0, \la]} | \pa_x^i \pa_y^j \varphi (x, y) |^2 w(x) dxdy, \quad k = 0, 1, 2 \dots.
 \end{align*}
  \indent
% We will assume the Dirichlet boundary condition (in $y$-direction)  for initial perturbation when we consider non-zero chemical diffusion ($\ep>0$) case 
% and use the function spaces $H^k_{ 0}$ and $H^k_{w, 0}$:
% \[  H^k_{0} : =  \overline{ C_0^{\infty}(\Omega)}^{H^{k}},\quad  H^k_{w,0} : =  \overline{ C_0^{\infty}(\Omega)}^{H^{k}_w}, \quad \Omega= \bbr\times (0, \la), 
% \quad k
% \in \bbZ  \mbox{ with } k\ge 0\] where $C^\infty_0(\Om)$ is the space of all  smooth functions which are compactly supported in $\Om$.\\
% 
% On the other hands, 
We will assume the 
  periodic boundary condition in $y$-direction for initial perturbation 
%  when we consider zero chemical diffusion case ($\ep=0$) 
  and use 
   the Sobolev   spaces  $H^k_{p}$ and  $H^k_{w, p}$ :
\begin{align*}   H^k_{p}  & :=  {H^k} ( \bbr\times \mathbf S^{\la}) = \{ \vp\in {H^k} (\Omega)\, |\, 
 \quad \sum_{i+j\le k} 
\sum_{n\in \mathbb{Z}}
 \int_\bbR  n^{2j}|\pa_x^i \varphi_n(x)|^2  dx < \infty \},
\end{align*}
\begin{align*}   H^k_{w,p}  & :=  {H^k_w} ( \bbr\times \mathbf S^{\la}) = \{ \vp\in {H^k_w} (\Omega)\, |\, 
 \quad \sum_{i+j\le k} 
\sum_{n\in \mathbb{Z}}
%\sum_{n\in \mathbb{Z}/{\la}}
 \int_\bbR  n^{2j}|\pa_x^i \varphi_n(x)|^2 w(x) dx < \infty \}
\end{align*}
 where $ \mathbf S^{\la}$ is the circle of perimeter $\la$ %$\la/ 2\pi$
 and $ \varphi_n(x)$ is the $n$th Fourier coefficient of $\varphi(x, \cdot_y )$. \\

In other words, $\vp\in {H^k_{w,p}} $ is a function on $\mathbb{R}^2$ such that it is $\la$-periodic in $y$ and its (weighted in $z$-direction) $H^k$ norm on $\Om=\mathbb{R}\times[0,\la]$ is finite. 
The cylindrical domain with this periodic boundary condition fits well to the picture
that blood vessels are lined with a single layer of endothelial cells. What it follows we use the equivalence of $\| \cdot \|_{H^k_p}$ and  $\| \cdot \|_{H^k_{w,p}}$ norms
to  $\| \cdot \|_{H^k}$ and  $\| \cdot \|_{H^k_{w}}$ norms for functions periodic in $y$.\\
  \indent
We perturb the system \eqref{eq:main} around a planar traveling wave solution $(N, C)$ with the front conditions 
\eqref{nfront}, \eqref{cfront} and  \eqref{derivative}  in a way that 
\begin{align}\label{ncp}
n(x, y, t) &= N(x- st) + u (x-st, y, t) \quad \mbox{and } 
\quad 
&c(x, y, t) = C(x- st)e^{- \psi(x-st, y, t)}  
 \end{align} 
where the traveling wave speed 
$s= \sqrt{
{\frac{n_-}{1+\ep}}}>0
$.
What it follows,
we will use the moving frame \[z=x-st.\] 
%In $z$ variable, it is equivalent to
%\begin{align*}\label{ncp_}
%n(z+st, y, t)&= N(z) + u (z, y, t) \quad \mbox{and } 
%\quad c(z+st,y,t)= C(z)e^{- \psi(z, y, t)}. \nonumber
%\end{align*}
We consider the above perturbation $(u, \psi)$ in  the following function class.
\begin{enumerate}
\item  There exists a vector-valued function $\varphi\in (H^3_{w,p})^2$ % for $\ep=0$ (or $\varphi\in (H^3_{w,0})^2$ for $\ep>0$)
 such that $u = \dv \varphi=\pa_z\vp^1+\pa_y\vp^2$.
\item  $\psi$ lies on $H^3_{p}$ %for $\ep=0$ (or $\ps\in H^3_{0}$ for $\ep>0$) 
with $\na \psi \in H^2_{w,p}$.
\end{enumerate}
 \indent
% Throughout the paper,
%   we always assume that our perturbation $(\vp,\ps)$ in the form of
%$ N - n = \dv \varphi$ and $c /C = e^{-\psi} $ satisfies the periodic condition 
%in $y$-direction with period $\la$.
In the subsection \ref{hopf}, the nonlinear perturbation $(\vp,\ps)$ is found directly related to the  Cole-Hopf transformation.\\ %: periodic for $\ep=0$ (or Dirichlet for $\ep>0$). Together with Sobolev regularity, we consider 
% $(\vp,\ps)\in (H^3_{w,p})^2\times  H^3_{p}$ for $ \ep=0$  
% (or  $(\vp,\ps)\in (H^3_{w,0})^2\times  H^3_{0}$ for $ \ep>0$)    with $\na \psi \in H^2_w$. 
 % We denote  both spaces $H^k_{w, p}$ for $ \ep=0$  and  $H^{k}_{w, 0}$ for $\ep >0$  by $H^k_w$ for notational convenience. 
 % In fact, for  \textit { a priori } estimate  concerned, only the norm of $\| \cdot \|_{H^k_w}$ appears.}

We are ready to state the main results of the paper. The first result is  on the nonlinear stability of the planar traveling
wave solution $(N, C)$ of the system \eqref{eq:main} when $\ep=0$.
 \begin{theorem}\label{theoremnc}
For a given planar traveling wave solution $(N,C)$ of the  system \eqref{eq:main} for $\ep=0$
with \eqref{nfront}, \eqref{cfront} and  \eqref{derivative},
there exist constants % $\ep_0>0,\, 
$m_0>0,\, C_0>0,$ and $\la_0>0$ such that
%if $ 0\le \ep < \ep_0$ and
 if $0<\la\le \la_0$, then
for any
 initial data $(n_0, c_0)$ in the form of $ N - n_0 = \dv \varphi_0$ and $c_0 /C = e^{-\psi_0} 
 $ where $\vp_0$ and $\ps_0$ are $\la-$periodic in $y$ with % and where they satisfy %satisfying 
% $\varphi_0 \in H^3_w $ and $\psi_0 \in H^3$ with $\nabla\psi_0\in H^2_w$. Then  such that if
  $M_0:=\| \vp_0\|^2_{H^3_w} + \| \ps_0\|^2_{H^3}  + \|\nabla\psi_0\|^2_{H^2_w}  \color{black}\le m_0$,  
% the length of transversal direction $\la$ is sufficiently small,
 there exist a unique global classical solution $(n,c)$  of  \eqref{eq:main} for $\ep=0$ in the form of
\[
n(x, y, t) = N(x- st) + \dv\varphi (x-st, y, t), 
\quad 
c(x, y, t) = C(x- st)e^{- \psi(x-st, y, t)},\]
%The perturbation $\varphi$ and $\psi$ satisfy the system 
%\begin{align}\label{eq0:phipsi} \begin{aligned}
%\varphi_t - s \varphi_x - \Del \varphi
%& =   N \na \psi + P\na \cdot \varphi + \na\cdot \varphi \na \psi \\
%\ps_t - s \ps_x -\ep \Del \ps &=  -2\ep  P \cdot\na \ps -\ep  |\na \ps|^2 + \dv\varphi.
%\end{aligned}
%\end{align}
%with $ \varphi(\cdot, 0) = \varphi_0$ and $\psi (\cdot, 0) = \psi_0$.
where $\vp$ and $\ps$ are $\la-$periodic in $y$ for each $t>0$ and $ \varphi|_{t=0} = \varphi_0$ and $\psi|_{t=0} = \psi_0$.
Moreover, $ (\phi, \psi)$ satisfies the following inequality:
%  \marginpar{add \\$\| \na \psi \|_{H^2_w}$}
\begin{align}\label{energyineq} \sup_{t\in [0, \infty)}\Big(\| \varphi\|^2_{H^3_w} + \| \psi\|^2_{H^3} + \| \na \psi \|^2_{H^2_w}\Big) 
%+ \int_0^{\infty} \int  \sum_{1\le k \le 4} \frac{|\na^k \varphi|^2}{N} + \int_0^{\infty} \int \sum_{1\le k\le 3} N|\na^k \psi|^2 + \frac{|\na^k \psi|^2}{N}
+\int_0^{\infty}  \Big(  \| \na  \vp\|_{H^3_w}^2
+   \| \na  \ps\|_{H^2_w}^2
%+\ep     \| \na^{4} \ps\|_w^2 
\Big)dt
 \le C_0M_0.
 \end{align}
\end{theorem}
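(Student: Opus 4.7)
The plan is to reformulate the problem as a coupled system for the Cole--Hopf variables $(\varphi,\psi)$ (as in \S\ref{hopf}), prove local well-posedness in the stated class by standard parabolic theory, and then establish a uniform a-priori energy estimate that closes via a continuation argument. Substituting $n=N(z)+\dv\varphi$ and $c=C(z)e^{-\psi}$ into \eqref{eq:main} with $\ep=0$ and using the profile identities $sC'=CN$ and $sN'+N''=(NC'/C)'$ yields
\[ \pa_t\psi - s\pa_z\psi = \dv\varphi, \]
\[ \pa_t\varphi - s\pa_z\varphi - \Delta\varphi = N\nabla\psi - (N/s)(\dv\varphi)\,\bfe_1 + (\dv\varphi)\nabla\psi, \]
so that $\psi$ obeys a linear transport equation forced by $\dv\varphi$, while $\varphi$ satisfies a semilinear parabolic system whose only quadratic nonlinearity is $(\dv\varphi)\nabla\psi$.

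Define
\[ \calE(t) := \|\varphi\|^2_{H^3_w} + \|\psi\|^2_{H^3} + \|\nabla\psi\|^2_{H^2_w}, \qquad \calD(t) := \|\nabla\varphi\|^2_{H^3_w} + \|\nabla\psi\|^2_{H^2_w}, \]
and target the inequality $\tfrac{d}{dt}\calE + c_1\calD \le C(\sqrt{\calE}+\la)\calD$, which for small $\calE(0)$ and $\la\le\la_0$ forces $\tfrac{d}{dt}\calE + \tfrac{c_1}{2}\calD\le 0$ and yields \eqref{energyineq}. For each multi-index $|\al|\le 3$, multiplying the equation for $\pa^\al\varphi$ by $w\,\pa^\al\varphi$ and integrating by parts (using $w'(z)=se^{sz}$) produces, besides the parabolic dissipation $\int w|\nabla\pa^\al\varphi|^2$, the coercive contribution $\tfrac{s^2}{2}\int e^{sz}|\pa^\al\varphi|^2$ that is characteristic of the moving-frame weight. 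An unweighted $H^3$ estimate for $\psi$ captures $\|\psi\|_{H^3}^2$ since the skew transport term $-s\pa_z$ vanishes on integration. The weighted $H^2_w$ dissipation of $\nabla\psi$, which is absent from the $\psi$ equation itself, is extracted by a Kawashima-type cross functional, for instance of the form $\eta\sum_{|\al|\le 2}\int w\,\pa^\al\varphi\cdot\pa^\al\nabla\psi$, whose time derivative (via both equations) contains a coercive piece $-\eta\|\nabla\psi\|^2_{H^2_w}$ modulo terms dominated by the primary parabolic dissipation and by $\sqrt{\calE}\calD$ from the single quadratic nonlinearity.

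To close the estimates in two dimensions, decompose each function as $f = \bar f(z,t) + \tilde f(z,y,t)$ with $\bar f := \tfrac{1}{\la}\int_0^\la f\,dy$ and $\int_0^\la\tilde f\,dy = 0$; Poincar\'e on $\mathbf S^\la$ then gives $\|\tilde f(z,\cdot)\|_{L^2_y}\le\tfrac{\la}{2\pi}\|\pa_y\tilde f(z,\cdot)\|_{L^2_y}$. The mean part $(\bar\varphi^1,\bar\psi)$ satisfies a one-dimensional system essentially identical to that analyzed in \cite{JinLiWa,LiWa}, and its stability is already known; the oscillatory system carries the genuinely two-dimensional nonlinearities (such as $\tilde\varphi\cdot\nabla\tilde\psi$), each of which contains at least one $y$-oscillating factor, so Poincar\'e supplies a factor of $\la$ that is absorbed into $c_1\calD$ once $\la$ is taken sufficiently small. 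The principal obstacle, in my view, is the construction of the cross functional delivering the $\|\nabla\psi\|_{H^2_w}^2$ dissipation with constants uniform in $\la$, since the $\psi$-equation is purely first order and the weight $w=1+e^{sz}$ interacts asymmetrically with the transport term; once this coercivity is in place, uniqueness and continuation of the local solution to a global one follow by standard arguments, and time-integration of $\tfrac{d}{dt}\calE+\tfrac{c_1}{2}\calD\le 0$ produces \eqref{energyineq}, completing the proof.
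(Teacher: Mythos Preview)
Your overall strategy—Cole--Hopf reformulation, weighted energy estimates, a cross functional, and Poincar\'e on thin strips—is the right framework, and you correctly isolate the central difficulty. However, the mechanism you propose for producing the weighted dissipation $\|\nabla\psi\|_{H^2_w}^2$ has a gap. Differentiating a Kawashima functional such as $\int w\,\pa^\al\varphi\cdot\pa^\al\nabla\psi$ and inserting the $\varphi$-equation, the term $N\nabla\psi$ contributes $\int wN\,|\pa^\al\nabla\psi|^2$. By Lemma~\ref{wave_prop}, $wN$ is bounded above and below by positive constants, so this yields only \emph{unweighted} dissipation $\int|\nabla\psi|^2$, not the $\int w|\nabla\psi|^2$ you need. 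No rescaling of the functional fixes this without destroying other terms, since the decay of $N$ and the growth of $w$ are exactly matched.

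The paper instead extracts $\|\nabla\psi\|_{H^2_w}^2$ directly from the transport structure of the $\psi$-equation combined with the one-sided exponential weight (Lemma~\ref{lemma1_2}). Multiplying $\nabla\psi_t - s\nabla\psi_z = \nabla(\dv\varphi)$ by $w\nabla\psi$ produces the nonnegative term $\tfrac{s}{2}\int w'|\nabla\psi|^2$. The domain is then split: for $z>0$ one has $w'\ge cw$, which gives the weighted dissipation outright; for $z<0$ one has $w\le C N$, so the unweighted estimate $\int_0^t\int N|\nabla\psi|^2$ suffices there—and \emph{that} quantity is obtained by a cross argument (Lemma~\ref{lemma1_1}, multiplying the $\varphi$-equation by $\nabla\psi$, not by $w\nabla\psi$). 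Thus the cross estimate covers the bounded-weight half and the transport--weight interaction covers the growing-weight half; neither alone closes.

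Two further remarks. First, your mean/oscillation decomposition with the mean part delegated to the one-dimensional results of \cite{JinLiWa,LiWa} is a plausible alternative route, but it is not what the paper does: the full two-dimensional system is treated directly, with Poincar\'e invoked only at isolated points (notably \eqref{cancelation}, after the algebraic identity $\calP/N=-1/s$ from \eqref{relation} reduces $\int\tfrac{\calP}{N}\varphi^1\dv\varphi$ to $-\tfrac{1}{s}\int(\varphi^1-\overline{\varphi}^1)\pa_y\varphi^2$). Second, the paper weights the $\varphi$-energy by $1/N$ rather than $w$, precisely so that the coupling terms cancel exactly: $\int N\nabla\psi\cdot\tfrac{\varphi}{N}+\int(\dv\varphi)\psi=0$. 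Using $w$ directly, as you propose, would leave an uncontrolled remainder here.
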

\begin{remark}
 \begin{enumerate}
% \item The existence of planar traveling waves with \eqref{nfront}, \eqref{cfront} and  \eqref{derivative} will be described in the next subsection
% \ref{existence_traveling}.
 \item
 The condition $ u= \dv \vp $  implies  the zero integral condition on $u$, $\int u dzdy = 0$.
Equivalently,  $n$ and $N$  satisfy 
\[ \int n(x, y, t) - N(x-st)  dxdy = 0,\]
which is preserved over time, provided  it holds at $t=0$ by the mass conservation property of $n$ equation.
\item
The weight function $w$ is of a same order as $1/N$ in $z>0$ direction. More precisely, for a fixed traveling wave, there exists a constant $M$ satisfying %there exist 
%$\ep_0>0$ and  $C$ independent of $\ep\in[0,\ep_0]$ such that
\[ \frac{w}{M}\leq\frac{1}{N}\leq Mw,\]
which is proved in Section $2.4$.

\item   {
%\fbox{Dec. 16} 
%In the zero diffusion case 
Theorem \ref{theoremnc} implies the asymptotic convergence of 
the primary perturbation of $(n, c)$ to $(N, C)$. From
 \[   
 \int_0^{\infty}  \Big(  \| \na  \vp\|_{H^3_w}^2
+   \| \na  \ps\|_{H^2_w}^2
%+\ep     \| \na^{4} \ps\|_w^2 
\Big)dt
 \le C_0M_0,
 \] 
 we get a weak asymptotical estimate
  \[   
\liminf_{t \rightarrow \infty}  \Big(  \| \na  \vp(t)\|_{H^3_w}^2
+   \| \na  \ps(t)\|_{H^2_w}^2
%+\ep     \| \na^{4} \ps\|_w^2 
\Big) 
= 0.\] In terms of $n(\cdot_z +st )-N=\dv\vp $ and $c(\cdot_z +st)/C=e^{-\psi}$, %$c-C=C(e^{-\psi}-1)$,
 we have
% \[   
%\liminf_{t \rightarrow \infty}  \Big(  \|n(\green{\cdot_z+st,\cdot_y,t)-N(\cdot_z)  \|_{H^3_w}^2
%+   \|  \nabla\Big( \frac{ c(\cdot_z+st,\cdot_y,t)}{C(\cdot_z) }\Big)   \|_{H^2_w}^2+
%\|  \nabla\Big( \frac{C(\cdot_z) }{c(\cdot_z+st,\cdot_y,t)  }\Big) }  \|_{H^2_w}^2
%%+\ep     \| \na^{4} \ps\|_w^2 
%\Big) 
%= 0.\]
 \[   
\liminf_{t \rightarrow \infty}  \Big(  \|n(\cdot_z +st,\cdot_y,t)-N(\cdot_z)  \|_{H^3_w}^2
%+   \|  \nabla\Big( \frac{c (\cdot_z +st,\cdot_y,t)}{C(\cdot_z) }\Big)   \|_{H^2_w}^2 
+   \|  \nabla\big( 
\log c (\cdot_z +st,\cdot_y,t) - \log C(\cdot_z)
\big)   \|_{H^2_w}^2
%+\|  \nabla\Big( \frac{C(\cdot_z) }{c(\cdot_z+st,\cdot_y,t)  }\Big)  \|_{H^2_w}^2 
\Big)
= 0.\]}
\item
The one-sided  decay appears naturally  with respect to the solvability of $\dv \vp = u$ in the infinite strip domain $\Omega$ \cite{Sol}.
\footnote{In the infinite strip domain $\Omega$, it is shown that for any $u$ satisfying $ \| u \|_{L^2(\Omega)}^2 
  + \int_0^{\infty} \mathcal U^2 (x) dx <\infty$ there exist a vector function $\varphi$ in $H_0^1(\Omega)$ such that $\dv \vp = u$ in \cite{Sol}.}
%we can see $u$ decay fast in $z>0$ direction if 
Indeed, if  there is a vector function $\vp \in L^2(\Om)$ satisfying the divergence equation,
assuming Dirichlet or the periodic boundary condition  on the lateral boundary,
%\[ \vp = 0  \mbox{ on }  (z, 0)\cup (z, \la) \mbox{ for } z \in \bbr.\] 
we have 
\[ \int_{z> x} \int  u(z, y)  dz dy=  \int_{z> x} \int_{[0, \la]}   \dv \vp  dz dy =  -\int_{[0, \la]}  \vp^1( x, y) dy.\]
Define $ \calU (x) = \int_{z> x}\int u dy dz  $, then $\calU(x)$ is found in $L^2([0, \infty))$ because 
\[ \int_0^{\infty} \calU^2(x) dx = \int_0^{\infty} \lr{ \int _{[0, \la]} \vp^1 (x, y) dy}^2 dx \le
\int _0 ^{\infty} \la \int _{[0, \la]} |\vp^1|^2(x, y) dy dx = \la\| \vp^1\|_{L^2}^2.\]
%Hence for $u $ to satisfy  $\dv \varphi = u$, 
%it must have a  spatial decay such that  $\int_0^{\infty} \calU^2  dx < \infty$ holds.
 In this paper, we do not pursuit solving $\dv \vp = u$ for a given perturbation $u$. Instead 
 we are given $\vp \in H^3_w$.
   Note that the condition $ \| \vp\|_{H^3_w} < \infty$ implies $ wu \in  L^{\infty}$,
    which is enough for $u$ to satisfy
  $\int_0^{\infty} \calU^2  dx < \infty$ and to be consistent with the necessary condtion for solvability.
  We refer to  \cite{Sol} for the solvability of the divergence equation on the more general noncompact domain.
  
%  \item
%  \comment{For the moment we only establish the zero diffusion case. Though the diffusion constant of the chemical may be negligible compared to growing speed of capillary networks, 
%  the technical reason not similarly treating $\ep>0$ case is in that
%  a cancelling property \eqref{cancelation} does not hold in this case, which is crucial to using the Poincare inequality and hence controlling
%  $L^2$ energy uniformly in time.}
  \end{enumerate}
\end{remark}

 When the chemical diffusion is present $(\ep>0)$, the above result of Theorem \ref{theoremnc} is hard to obtain for technical reasons. Instead, we obtain  a linear stability of the planar traveling wave solution $(N, C)$ of 
the system \eqref{eq:main} when $\ep>0$ for perturbations in a smaller class(mean-zero in $y$). Indeed, as will be derived in Section \ref{hopf}, the pertubation  $(\varphi, \psi)$ satisfies
\begin{align}\label{eq0:nonlinear}\begin{aligned}
\varphi_t - s \varphi_z - \Del \varphi
& =   N \na \psi + P\na \cdot \varphi + \na\cdot \varphi \na \psi, \\
\ps_t - s \ps_z -\ep \Del \ps &=  -2\ep  P \cdot\na \ps -\ep  |\na \ps|^2 + \dv\varphi
\end{aligned}\end{align} {where  $P :=-(C'/C, 0)$}.
Quadratic terms dropped out, the linearized system of $(\vp, \ps)$ is as follows:
\begin{align}\label{eq0:linear} \begin{aligned}
\varphi_t - s \varphi_z - \Del \varphi
& =   N \na \psi + P\na \cdot \varphi,  \\
\ps_t - s \ps_z -\ep \Del \ps &=  -2\ep  P \cdot\na \ps  + \dv\varphi.
\end{aligned}
\end{align}
What it follows we denote $\bar \phi (z) = \int_0^{\la} \phi(z, y) dy$.
\begin{theorem}\label{theoremli} %\textcolor{blue}{Nov.24}
%Any given planar traveling wave solution $(N,C)$ of the  system \eqref{eq:main} for sufficiently small $\ep>0$
%with \eqref{nfront}, \eqref{cfront} and  \eqref{derivative} is linearly stable in the following sense: \\
\indent
For  sufficiently small $\ep>0$ and for any given planar traveling wave solution $(N,C)$ of the  system \eqref{eq:main}  with \eqref{nfront}, \eqref{cfront} and  \eqref{derivative},
the wave $(N,C)$ is linearly stable in the following sense: \\
 % For $n_->0$ and for $c_+>0$}
 There exist constants 
 $ %\ep_0>0, %m_0>0,
% \, 
 C_0>0,$ and $\la_0>0$ such that
% if $ 0< \ep \leq \ep_0$ and 
 if  $0<\la\le \la_0$, then for any initial data $(\vp_0, \ps_0)$ which is $\la-$periodic in $y$ 
with 
  $$
  \bar\vp_0= \bar \ps_0 =0  \mbox{ and } M_0:=\| \vp_0\|^2_{H^3_w} + \| \ps_0\|^2_{H^3}  + \|\nabla\psi_0\|^2_{H^2_w}  \color{black}<\infty,  $$ there exists a unique global classical solution $(\vp, \ps)$ of \eqref{eq0:linear} which is  $\la-$periodic in $y$   and $\bar\vp= \bar \ps =0$ for each $t>0$ and which satisfies
%  satisfying
   the inequality:
  \begin{align}
  \label{energyineqwith}   \sup_{t\in [0, \infty)}\Big(\| \varphi\|^2_{H^3_w} + \| \psi\|^2_{H^3} + \| \na \psi \|^2_{H^2_w}\Big) 
%+ \int_0^{\infty} \int  \sum_{1\le k \le 4} \frac{|\na^k \varphi|^2}{N} + \int_0^{\infty} \int \sum_{1\le k\le 3} N|\na^k \psi|^2 + \frac{|\na^k \psi|^2}{N}
+\int_0^{\infty}  \Big(  \| \na  \vp\|_{H^3_w}^2
+   \| \na  \ps\|_{H^2_w}^2
%+\ep     \| \na^{4} \ps\|_w^2 
{+\ep     \| \na^{4} \ps\|_{H^0_w}^2} \Big)dt
 \le C_0M_0.\end{align}
\end{theorem}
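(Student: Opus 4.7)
The plan is to adapt the weighted energy argument underlying Theorem \ref{theoremnc} to the linear system \eqref{eq0:linear}, tracking the new contributions generated by the chemical diffusion $\epsilon\Del\ps$. Since \eqref{eq0:linear} is a linear parabolic system with smooth, $z$-dependent coefficients, standard theory supplies local-in-time classical solutions in the prescribed regularity, so the task reduces to establishing the a priori inequality \eqref{energyineqwith} uniformly in $t$. A short preliminary step is to verify that the mean-zero property $\bar\vp=\bar\ps\equiv 0$ propagates: averaging \eqref{eq0:linear} in $y$ yields a closed linear system for $(\bar\vp,\bar\ps)$ for which zero is the unique solution with vanishing data. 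With this in hand, the Poincaré inequality $\|f\|_{L^2_y}\le C\la\|\pa_y f\|_{L^2_y}$ for mean-zero $f$ is available at every derivative level, and the smallness hypothesis $\la\le\la_0$ furnishes the small prefactor needed to absorb low-order cross terms into the principal dissipation.

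The core of the proof consists of weighted energy estimates. I would test the $\vp$-equation against $w\,\pa^\al\vp$ for $|\al|\le 3$, and the $\ps$-equation against both $\pa^\al\ps$ (to control the unweighted $H^3$ norm of $\ps$) and $w\,\pa^\al\nabla\ps$ for $|\al|\le 2$ (to control the weighted norm of $\nabla\ps$). For the $\vp$-estimate, the transport term $-s\vp_z$ contributes $\tfrac{s^2}{2}\int|\vp|^2 e^{sz}$ after integration by parts, while the parabolic term $-\Del\vp$ produces the coercive $\int|\nabla\vp|^2 w$ together with $-\tfrac{s^2}{2}\int|\vp|^2 e^{sz}$ from $\Del w$; these boundary-type pieces cancel, leaving the clean dissipation $\|\nabla\vp\|_{L^2_w}^2$. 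The couplings $N\nabla\ps$ and $P\dv\vp$ on the right-hand side are controlled by exploiting that $Nw$ is uniformly bounded (a consequence of $1/N\le Mw$) and that $P=-(C'/C,0)$ is smooth and bounded; the resulting bilinear forms are absorbed into the dissipations of $\nabla\vp$ and $\nabla\ps$, with residual low-order terms tamed by Poincaré in $y$.

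For the $\ps$-estimate, the weighted $\nabla\ps$ argument proceeds in the spirit of the $\ep=0$ case: the transport structure produces a boundary-type dissipation $\int|\nabla\ps|^2 e^{sz}$ which, combined with Poincaré and the $\dv\vp$-coupling, yields the full weighted dissipation $\|\nabla\ps\|_{H^2_w}^2$ after iteration. The new diffusion term $\ep\Del\ps$ additionally supplies, at the highest derivative level, the coercive quantity $\ep\|\nabla^4\ps\|_{L^2_w}^2$ recorded in \eqref{energyineqwith}. The drift correction $-2\ep P\cdot\nabla\ps$ is manifestly of order $\ep$ and, together with the analogous piece $\ep\int(\dv P)\ps^2$ arising in the unweighted $\ps$-estimate, can be absorbed into the linear dissipation provided $\ep$ is sufficiently small.

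The main obstacle is closing the estimate at the top derivative level. Differentiating the system three times produces commutators of $\pa^\al$ with the weight $w$, with $N(z)$, and with the $z$-dependent drift $P$, all of which must be controlled by the lower-order energy and the weighted dissipation; this is where Remark 1.2(2) together with smoothness of the travelling wave profile is essential. The $\ep$-corrections deserve extra care: at the top level the chemical diffusion supplies the missing coercivity on $\nabla^4\ps$, but at intermediate levels the commutators $\ep[\pa^\al,\Del]\ps$ and $\ep\,\pa^\al(P\cdot\nabla\ps)$ must remain small relative to the principal dissipation, which is the reason $\ep$ is required small. Combining all levels gives a differential inequality of the form $\tfrac{d}{dt}E + c\,D\le 0$ with $E$ and $D$ as in \eqref{energyineqwith}; integrating in time yields the global bound, and global existence follows automatically from linearity.
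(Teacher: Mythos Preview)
Your overall strategy matches the paper's: adapt the weighted energy hierarchy from the $\ep=0$ case to the linearized system \eqref{eq0:linear}, use the mean-zero condition to invoke Poincar\'e in $y$, and absorb the new $\ep$-contributions using smallness of $\ep$. The local existence and propagation of $\bar\vp=\bar\ps=0$ are handled exactly as you say.

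Two points deserve sharpening. First, the paper tests the $\vp$-equation against $\vp/N$, not $w\vp$. The reason is structural: with weight $1/N$ the coupling $N\na\ps$ becomes $\int\vp\cdot\na\ps$, which cancels exactly against $\int\ps\,\dv\vp$ from the $\ps$-equation. With weight $w$ you retain your exact transport/diffusion cancellation (since $sw'=w''$), but you lose this cross-cancellation, and the residual $\int(Nw)\vp\cdot\na\ps$ must then be absorbed via Poincar\'e into $\ep\|\na\ps\|^2$; this forces $\la_0$ to depend on $\ep$. The paper's choice of $1/N$ avoids this, at the price of handling the mismatch $\tfrac12\int|\vp|^2(1/N)''-\tfrac{s}{2}\int|\vp|^2(1/N)'$ (no longer zero when $\ep>0$) by Poincar\'e on $\vp$ alone.

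Second, your description of how the weighted dissipation of $\na\ps$ is obtained is too vague. The transport weight $w'=se^{sz}$ gives damping only for $z>0$; for $z<0$ it degenerates. The paper closes this via a two-step mechanism you do not name: (i) multiply the $\vp$-equation by $\na\ps$ (not the $\ps$-equation!) to obtain a bound on $\int_0^t\int N|\na\ps|^2$ in terms of already-controlled quantities; (ii) split $\int w|\na\ps|^2$ into $z>0$ (where $w'\ge cw$ gives genuine damping) and $z<0$ (where $w\le C N$, so step (i) applies). The $\ep$-terms are then absorbed at each step using $\ep$ small. Invoking ``the spirit of the $\ep=0$ case'' is correct in intention, but these two devices are the actual content of that spirit and should be stated explicitly.
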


% {Theorem \ref{theoremnc} turns out  the immediate consequence of Proposition \ref{zeroglobal} stating the existence of the  small global solution of \eqref{eq0:nonlinear} when $\ep=0$. For Theorem \ref{theoremli} considering the linear system \eqref{eq0:linear},
%the only issue is to prove the asymptotic behavior \eqref{energyineqwith}.  \\

The next theorem gives a hint why studying 
planar traveling waves is natural instead of studying general traveling waves in $\Om$.
% explains why we consider not a general traveling wave in $\Om$ but a planar traveling wave in this paper. 
Roughly speaking, any smooth bounded solution  with our front boundary conditions \eqref{nfront}, \eqref{cfront} becomes eventually planar under some strong regularity conditions when $\ep>0$.
  This kind of nonlinear stability can be found in \cite{Const} on the stability of a reactive Boussinesq system in
an infinite vertical strip.  It also hints the asymptotic stability as in Theorem \ref{theoremnc} 
might hold as well 
%on those domains 
in a nonzero diffusion case. We state this theorem in terms of $(n, q)$ in \eqref{nq} which will be obtained  in the subsection \ref{hopf} by applying the Cole-Hopf transformation $q:=-\frac{\nabla c}{c}$ to % $(n, c)$
\eqref{eq:main}. 
\begin{theorem}\label{thm_eventual}
Let  $\ep>0$ and $(n, q)$ be a global smooth solution of \eqref{nq}  {  %for some $\epsilon>0$ 
which is periodic in $y$ 
%. Moreover there exists a constant $C$ such that 
 satisfying 
$$\sup_{t\in[0,\infty)} \Big(\| n(t)\|_{L^{\infty}}
% + \| \na n (t)  \|_{L^{\infty}}
  + \|q (t) \|_{L^{\infty}} + \| \na q (t) \|_{L^{\infty}} \Big)< C_1 $$
for some constant $C_1$. In addition, suppose that the derivatives of $(n,q)$ vanish sufficiently rapidly as $|x|\rightarrow \infty$.
Then there exists a constant $c=c(\ep,\la)>0$ such that %it holds that for all $t >0$ 
%\[ \int_0^\infty \| n_y\|_{L^2} +\ep \| q_y \|_{L^2} dt  < C_2\la ( \| n_y(0)\|_{L^2} + \|q_y(0)\|_{L^2})\]
\[ \| n_y\|^2_{L^2} + \| q_y \|^2_{L^2} \leq  
  ( \| n_y(0)\|^2_{L^2} + \|q_y(0)\|^2_{L^2})e^{-ct}\quad\mbox{ for } t>0\]
if the transversal length $\la>0$ is sufficiently small.}
\end{theorem}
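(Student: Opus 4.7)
The plan is to control the energy $E(t) := \|n_y\|_{L^2}^2 + \|q_y\|_{L^2}^2$, which measures the failure of $(n,q)$ to be planar. Because $n$ and $q$ are $\la$-periodic in $y$, on each slice $\{x\}\times\mathbf S^{\la}$ the function $y\mapsto n_y(x,y,t)$ has zero mean, so the Poincar\'e inequality gives $\|n_y\|_{L^2(\Om)}^2 \le (\la/2\pi)^2\|\na n_y\|_{L^2(\Om)}^2$ and similarly for $q_y$. Exponential decay of $E$ therefore follows once an estimate of the form $E'(t)+\|\na n_y\|^2+\ep\|\na q_y\|^2 \aleq E(t)$ is in hand with implicit constant independent of $\la$: the Poincar\'e-amplified dissipation then dominates the right-hand side as soon as $\la$ is chosen small enough in terms of $\ep$ and $C_1$.

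To produce such an estimate I will differentiate the Cole-Hopf system \eqref{nq}, which schematically reads $n_t-\Del n=\na\cdot(nq)$ and $q_t-\ep\Del q+2\ep(q\cdot\na)q=\na n$, once in $y$. Testing the equations satisfied by $n_y$ and $q_y$ against themselves and integrating over $\Om$, the boundary contributions at $|x|\to\infty$ vanish by the rapid decay hypothesis and those on $y\in\{0,\la\}$ vanish by periodicity. Using the identity $\int q_y\cdot(q\cdot\na)q_y=-\tfrac12\int|q_y|^2\,\na\cdot q$, this gives
\begin{equation*}
\tfrac12\tfrac{d}{dt}E+\|\na n_y\|^2+\ep\|\na q_y\|^2 = -\int\na n_y\cdot(n_y q+n q_y)-2\ep\int q_y\cdot(q_y\cdot\na)q+\ep\int|q_y|^2\,\na\cdot q+\int q_y\cdot\na n_y.
\end{equation*}
Invoking $\|n\|_{L^\infty},\|q\|_{L^\infty},\|\na q\|_{L^\infty}\le C_1$ and Young's inequality, each term involving $\na n_y$ on the right can be bounded by $\delta\|\na n_y\|^2+C_\delta(C_1)(\|n_y\|^2+\|q_y\|^2)$ while the remaining terms are bounded by $C(C_1,\ep)\|q_y\|^2$; choosing $\delta$ small enough yields $\tfrac{d}{dt}E+\tfrac12\|\na n_y\|^2+\ep\|\na q_y\|^2 \le K(C_1,\ep)\,E$. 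Combined with Poincar\'e, this delivers $E'(t)\le -c(\ep,\la)\,E(t)$ once $\la$ is sufficiently small, and Gr\"onwall closes the argument.

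The main obstacle I expect is the coupling term $\int q_y\cdot\na n_y$, which arises from the $\na n$ source in the $q$-equation and cannot be eliminated by any antisymmetry; it has to be absorbed into the single dissipation $\|\na n_y\|^2$ alongside the self-transport $\int(n_y q)\cdot\na n_y$ and the cross-transport $\int(n q_y)\cdot\na n_y$, while the $\ep\|\na q_y\|^2$ dissipation is preserved essentially intact. The bookkeeping must leave a strictly positive coefficient on $\|\na n_y\|^2$ after all three absorptions are performed, which is precisely the reason that the smallness threshold for $\la$ in the statement necessarily depends on $\ep$ (through the constant $K$) as well as on $C_1$.
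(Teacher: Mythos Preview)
Your proposal is correct and follows essentially the same approach as the paper's proof: differentiate \eqref{nq} in $y$, test against $(n_y,q_y)$, absorb the cross terms involving $\na n_y$ via Young's inequality, and then use the Poincar\'e inequality in $y$ (valid since $n_y,q_y$ have zero $y$-mean by periodicity) to turn the remaining dissipation into exponential decay once $\la$ is small. The only minor difference is that you handle the convective term $\int q_y\cdot(q\cdot\na)q_y$ by the divergence identity $-\tfrac12\int|q_y|^2\,\na\cdot q$, whereas the paper bounds it directly by $\ep\|q\|_{L^\infty}(\delta\|\na q_y\|^2+\delta^{-1}\|q_y\|^2)$ and absorbs the $\delta\|\na q_y\|^2$ piece into the $\ep$-dissipation; your treatment is slightly cleaner but the argument is otherwise identical.
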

\begin{remark} { It implies
%we get a weak asymptotical estimate
 $   
\lim_{t \rightarrow \infty}  \Big( \| n_y\|_{L^2} + \| q_y \|_{L^2}
\Big) 
= 0.$}
\end{remark}

\indent 
The remaining parts of the paper are organized as follows. In Section  \ref{backg}, we introduce the backgroud materials including the existence of traveling wave solutions and the Cole-Hopf trasformation.
 In Section \ref{sectionuniform}, we prove Proposition
\ref{zerolocal} and \ref{zeroglobal}, then establish Theorem \ref{theoremnc}. 
 In Section \ref{sectionuniformwith}, we prove the energy inequality \eqref{energyineqwith} in 
 Theorem \ref{theoremli}. With \eqref{energyineqwith} the existence part of Theorem \ref{theoremli}  follows   the same argument that works for the $\ep=0$ case, therefore we omitted. In the last section, %\ref{stabilitythin},
  we proves Theorem \ref{thm_eventual} which points
%present an \textit{a  priori} estimate pointing 
out a nonlinear stability on a thin cylindrical domain for   $\ep>0$ case would be expected as well.\\ 
\indent
 There are many prior results on  the existence of traveling wave solution and its stability. 
  Among earlier analytic works is \cite{NaI}, 
  where they proved the existence and linear instability of traveling wave solution of the one dimensional system when the bacterial consumption rate is constant and $\chi(c) = 1/c$.
 The existence of the traveling wave solution with the  front conditions \eqref{nfront} and \eqref{cfront} 
 was shown in \cite{WaHi} when $\ep=0$,  and \cite{LiWa, Wa} when $\ep>0$. 
 % In zero chemical diffusion cases, we refer to \cite{JinLiWa, WaHi} and in non-zero chemical diffusion cases, to \cite{LiWa, Wa, LiLiWa}. 
 In the one dimensional system, the nonlinear asymptotic stability results were shown 
 under the aforementioned restrictions on $\chi(c)$ and $m$
  in \cite{JinLiWa} when $\ep=0$, and \cite{LiWa} when $\ep>0$ is small.
   To our knowledge, our Theorem \ref{theoremnc} 
%   and Theorem \ref{theoremli} 
   is the first result on a nonlinear  stability of   traveling wave solutions in a higher dimension.\\
   \indent
  For the results on the Cauchy problem of \eqref{KS}, see \cite{CPZ1, CPZ2, FoFr, FrTe, LiLi}, where \cite{CPZ1, CPZ2} prove the existence of a global weak solution, and \cite{LiLi} proves that of a global classical solution. Both results consider the zero chemical diffusion case in a multi-dimension. \\
 \indent 
 %We conclude this section with a few remarks.\\
 
 \end{subsection} 
 
 \section{Background}\label{backg}
 
\begin{subsection}{Existence of traveling wave solutions}\label{existence_traveling}
%\comment{In this subsection and next our exposition includes both $\ep =0$ and $\ep>0$ cases for the 
%future reference.}
The traveling wave solution $(N, C)$ in  \eqref{twave} solves the following ODE system,
\begin{align} \begin{aligned} \label{NC}
-sN' -  N'' = -  ( \chi(C)  C' N)',\\
-s C' -  \ep C'' = - C^m N.
\end{aligned} \end{align}
It is easy to see the above system  integrable. We briefly discuss its solvability and properties of the solution  for reader's convenience. What it follows in this subsection has been already well known (for instance in \cite{LiLiWa, Wa}).

Let $H'(\cdot) = \chi(\cdot)$. We first solve $N$ by
%For $S = (s, 0)$ and $H'(\cdot) = \chi(\cdot)$,  the $N$ equation is written as 
%\[ \na \cdot ( S N+ \na N) = \na \cdot (N \na H(C)).\]
%We have 
%\begin{align*}
%&\na \ln N = - \na ( sz - H(C)), \, \ln N = - (s z -H(C))+ N_0,\\
%& N(z, y) = e^{-sz} e^{H(C)} e^{N_0}.
%\end{align*}
\[ N(z) = N_0 e^{-sz} e^{H(C)}\]
for some positive constant $N_0$, {which means a translation by  $z_0$ when $N_0=e^{s z_0}$.}
The system \eqref{NC} with the front conditions \eqref{nfront} and \eqref{cfront} is translation invariant.
It causes the undetermined constant $N_0$.

To satisfy the front condition \eqref{nfront},  we impose $e^{H(C)} \to 0$ as $z \to -\infty$, 
which implies
$H(\cdot)$ is singular at zero such as $\lim_{C \to 0} H(C) = -\infty$.  Substituting  $N$ to the C equation, we have 
\[ sC' + \ep C'' = C^m e^{-sz} e^{H(C)} {N_0}.\]
What it follows we let 
\[ H(C) = \ln C, \quad m=1.\]
 We result in 
\begin{align}\label{NC_copy}
N= e^{-sz} C {N_0}, \quad  sC' + \ep C'' = e^{-sz}C^2 {N_0}.
\end{align}  By introducing 
\[W = e^{-sz}C\]
we have a KPP-type equation for $W$:
\begin{equation}\label{W}
 \ep W'' + (s + 2s\ep) W' = - (\ep +1)s^2 W +{N_0}W^2:  = -f(W).
\end{equation}
Since $ N= WN_0$, front conditions  \eqref{nfront}, \eqref{cfront} force $W$ to have 
\[   \lim_{z \to -\infty} W (z) = \frac{1}{N_0}n_- \quad   \lim_{z \to \infty} W (z) = 0.\]
It is well known that the KPP- fisher equation \eqref{W} 
has a nonnegative solution if and only if $(s+2\ep s)  \ge 2\sqrt{\ep(\ep+1)}s$, which  automatically holds in this case.
More precisely we refer to the following lemma in \cite{Wa}, which is based on the standard argument
for the Fisher equation in \cite{LeNe, Mu} etc.
\begin{lemma}\label{KPPtheorem}[Wang, Lemma 3.2.]
A nonnegative traveling wave solution $W$ of \eqref{W} exists.
It satisfies $W' <0,$ 
 \[ W_- = \frac{(1+\ep)s^2}{N_0}, \mbox{ and } \quad  W_+ = 0.\]
  Moreover it is unique up to
a translation in $x$, or $t$ and has the following asymptotic behavior:
\[ W(x) -  \frac{(\ep+1)s^2}{N_0} \sim Ce^ {\la  x} \mbox{ as } x \to  -\infty \mbox{ and } \quad 
W(x) \sim (e^{-sx}) \mbox{ as } x \to  \infty\]
with   $ \la =\frac{-s(1+ 2\ep) + s \sqrt{ (1+2\ep)^2 + 4\ep(1+\ep)}}{2\ep}$ when $\ep >0$ and $\la = -s$ when $\ep=0$.
\end{lemma}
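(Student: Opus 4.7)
My plan is to analyze the second-order ODE \eqref{W} via phase-plane methods for the case $\ep>0$, and treat the degenerate $\ep=0$ case separately as a scalar first-order ODE. Throughout, write $f(W)=(\ep+1)s^2W-N_0W^2=W\bigl((\ep+1)s^2-N_0W\bigr)$, so that the equation becomes $\ep W''+(s+2s\ep)W'+f(W)=0$, and the rest-points of $f$ are precisely $W=0$ and $W=W_-:=(1+\ep)s^2/N_0$. I will view $W=W_-$ as the ``unstable'' state and $W=0$ as the ``stable'' state and look for a monotone heteroclinic connection from one to the other.

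\emph{Step 1: phase plane and linearization ($\ep>0$).} Set $V=W'$ and rewrite \eqref{W} as the planar system $W'=V$, $V'=-\tfrac{1}{\ep}\bigl((s+2s\ep)V+f(W)\bigr)$. The two equilibria are $(0,0)$ and $(W_-,0)$. A direct computation of the Jacobians gives characteristic polynomials
\begin{equation*}
\mu^2+\tfrac{s+2s\ep}{\ep}\mu+\tfrac{(\ep+1)s^2}{\ep}=0\quad\text{at }(0,0),\qquad \mu^2+\tfrac{s+2s\ep}{\ep}\mu-\tfrac{(\ep+1)s^2}{\ep}=0\quad\text{at }(W_-,0).
\end{equation*}
Using the algebraic identity $(1+2\ep)^2-4\ep(\ep+1)=1$, the first polynomial has the two negative real roots $-s$ and $-s(1+\ep)/\ep$, so $(0,0)$ is a stable node; the second polynomial has one positive root $\la$ (exactly the one stated in the lemma) and one negative root, so $(W_-,0)$ is a saddle. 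The unstable manifold at $(W_-,0)$ is one-dimensional, tangent to the eigenvector $(1,\la)$, and likewise the stable node at the origin is approached tangentially to the slow eigendirection with rate $e^{-sx}$. This already predicts the two asymptotic rates stated in the lemma.

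\emph{Step 2: existence and monotonicity via a trapping region.} The main obstacle is to show that the branch of the unstable manifold of $(W_-,0)$ leaving into the quadrant $\{0<W<W_-,\ V<0\}$ actually lands at the origin without leaving this strip (in particular without $V$ becoming positive or $W$ becoming negative). The standard KPP device is to build an invariant triangle: take a line $V=-\mu W$ in the $(W,V)$-plane for a suitable $\mu>0$. On this line,
\[
\frac{dV}{dW}=\frac{V'}{W'}=-\frac{1}{\ep}\left((s+2s\ep)-\frac{f(W)}{\mu W}\right)
\]
is at most $-\mu$ whenever $(s+2s\ep)-\frac{f(W)}{\mu W}\ge \ep\mu$, i.e.\ for $W$ in the whole interval $[0,W_-]$ provided $\mu$ is chosen as one of the roots of $\ep\mu^2-(s+2s\ep)\mu+(\ep+1)s^2=0$ (these roots are real since the discriminant is $s^2\ge 0$; this is exactly the KPP condition, which is automatic here). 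Any such $\mu$ gives a wedge-shaped invariant region bounded by $V=0$, $W=W_-$ and $V=-\mu W$, inside which the orbit of the unstable manifold is trapped and is forced to $(0,0)$. On this orbit $V=W'<0$, yielding $W'<0$, while the strict monotonicity $W'<0$ on the whole real line follows because $V=0$ with $0<W<W_-$ is incompatible with the system ($V'=-f(W)/\ep\ne0$ there).

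\emph{Step 3: sharpened asymptotics and uniqueness.} The asymptotic $W(x)-W_-\sim Ce^{\la x}$ as $x\to-\infty$ now follows from the stable-manifold theorem applied at the saddle $(W_-,0)$: the unstable manifold is a $C^1$ curve tangent to the eigenvector corresponding to $\la>0$, and since $\la$ is the unique positive eigenvalue the exponential rate of approach is exactly $\la$. The asymptotic $W(x)\sim e^{-sx}$ as $x\to+\infty$ follows similarly from the linearization at $(0,0)$: both eigendirections at the node decay, but generic orbits approach tangent to the slow direction with eigenvalue $-s$, and the decay rate $-s(1+\ep)/\ep$ is realized only by a one-dimensional submanifold of trajectories which, by a standard connection argument (see e.g.\ the classical Fisher--KPP analysis in \cite{LeNe, Mu}), is not the one selected by the unstable manifold of $(W_-,0)$. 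Finally, uniqueness up to translation is immediate: the unstable manifold of the saddle is one-dimensional, so any two traveling-wave profiles are orbits of the same trajectory and hence differ only by a shift in $x$.

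\emph{Step 4: the case $\ep=0$.} Here \eqref{W} reduces to the scalar ODE $sW'=-f(W)=-(\ep+1)s^2W+N_0W^2$ with $\ep=0$, i.e.\ $W'=-sW+(N_0/s)W^2$, which has the two equilibria $W=0$ and $W=W_-=s^2/N_0$ and is monotonically decreasing between them by inspection of the sign of $-f$. The ODE can be integrated explicitly (separation of variables), and a linearization at $W=W_-$ gives $(W-W_-)'\sim -s(W-W_-)$... wait, with the sign: $W'\approx (-s+2(N_0/s)W_-)(W-W_-)=s(W-W_-)$, so $W-W_-\sim Ce^{sx}$ which recovers $\la=-s$... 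I mean $\la=s$? Let me recheck: the stated value is $\la=-s$ at $\ep=0$, and the profile decays to $W_-$ as $x\to-\infty$, so the approach rate is indeed $e^{\la x}$ with $\la>0$; in the formula the sign as $\ep\downarrow 0$ is $\la\to s$, matching the linearization. (The ``$\la=-s$'' written in the statement for $\ep=0$ is for the decay at $+\infty$, which is $W\sim e^{-sx}$.) Uniqueness up to translation is automatic for an autonomous scalar ODE.
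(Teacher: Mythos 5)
The paper offers no proof of this lemma at all: it is quoted verbatim from Wang \cite{Wa} (Lemma 3.2), with a pointer to the standard Fisher--KPP analysis in \cite{LeNe,Mu}. So your phase-plane argument is by necessity a different, more self-contained route, and in outline it is the right one. Steps 1--2 check out: the discriminant identity $(1+2\ep)^2-4\ep(1+\ep)=1$ gives the eigenvalues $-s$ and $-s(1+\ep)/\ep$ at the origin (stable node) and the stated $\la>0$ at $(W_-,0)$ (saddle); the wedge bounded by $V=0$, $W=W_-$ and $V=-\mu W$ is invariant for $\mu\in\{s,\ s(1+\ep)/\ep\}$ precisely because $f(W)/W\le f'(0)$ on $[0,W_-]$ (the KPP property); and uniqueness up to translation from the one-dimensionality of the unstable manifold is fine (you should add the one-line remark that the opposite branch of the unstable manifold, entering $W>W_-$ with $V>0$, cannot return to the origin, since it would have to cross $V=0$ at some $W>W_-$ where $V'=-f(W)/\ep>0$).

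Two points need attention. First, the only substantive gap is the decay rate at $+\infty$: saying that \emph{generic} orbits enter a stable node along the slow eigendirection does not show that \emph{this} heteroclinic orbit does; you must exclude that it lies on the strong stable manifold, which would give the faster rate $e^{-s(1+\ep)x/\ep}$ instead of $e^{-sx}$. This is exactly the nontrivial part of the asymptotics, and in your sketch it is only asserted by appeal to ``a standard connection argument''; either quote a precise statement from the KPP literature (the present situation is the strictly-supercritical case, since the discriminant equals $s^2>0$) or supply the comparison argument. Second, your Step 4 observation is correct and should be stated cleanly rather than as a self-correcting aside: at $\ep=0$ the linearization at $W_-$ gives $W-W_-\sim Ce^{sx}$ as $x\to-\infty$, consistent both with the limit $\la\to s$ as $\ep\downarrow0$ of the displayed formula and with the explicit solution written in Section \ref{existence_traveling}; the value ``$\la=-s$'' in the statement as quoted is a sign slip (it is the decay exponent at $+\infty$, not the rate at $-\infty$). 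Please delete the ``wait''/``I mean'' narration before this goes into any write-up.
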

To be  consistent with  Lemma \ref{KPPtheorem}, the wave speed $s$ is uniquely determined by 
$n_-$,  
\begin{equation} s^2 = \frac {n_-}{1+\ep}.\end{equation}
%What it follows we let $n_- =1$ and $s = 1/\sqrt 2$ without loss of generality. Also we see that 
%\[ W'(-\infty) = 0, \quad W'(\infty)= 0.\] 
Also  from $(C'-sC) =  e^{sz} W' < 0$ and the boundary condition $C(-\infty)= C'(-\infty)=0$, 
we have 
\[ W' (-\infty) =0.\] 
We convert back $N$, $C$ satisfying \eqref{twave}  from $W$ with the required front condition 
\eqref{nfront}, \eqref{cfront} and \eqref{derivative}.  The monotonicity of $N, C$ follows from $W' < 0$;
the profile of $N$ is decreasing, and $C$ is increasing left to rightward.
\begin{figure}
\begin{center}
\includegraphics[scale=0.8]{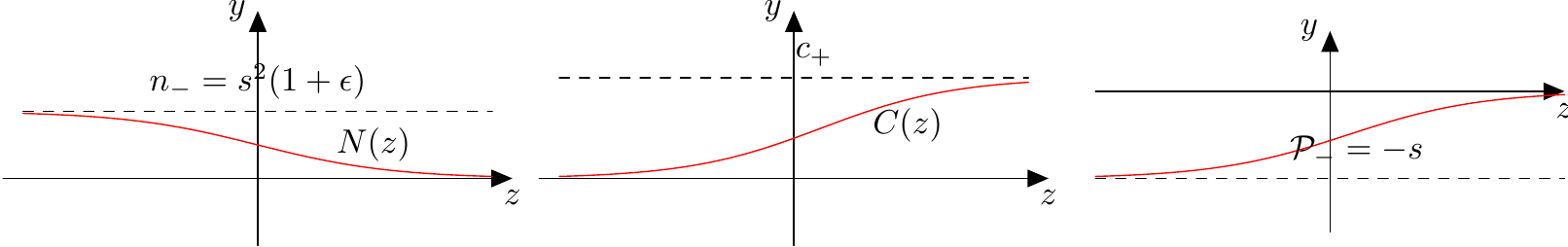}
\caption{ Monotonicity of $N$, $C$ and $\mathcal P$.}
\end{center}
\end{figure}
\indent
In the zero diffusion ($\ep =0$)
we can explicitly solve \eqref{NC} 
\[ sC' = e^{-sz} C^2 N_0\]
such that 
\[ C = \frac{c_+}{ c_+ \frac{N_0}{s^2} e^{-sz} +1}.\]
Then 
\[ N = \frac{c_+ N_0 e^{-sz}}{c_+ \frac{N_0}{s^2} e^{-sz} +1}.\]
We have the relation $ n_- = s^2$  and the monotone profile of $(N, C)$.
\end{subsection}
\begin{subsection}{Cole-Hopf transformation}\label{hopf}

%Using   \eqref{NC}, we find the equation $(u, w)$ satisfy to be 
%\begin{align}\label{per} \begin{aligned}
%&u_t - s u_x - \Del u
%= - \na \cdot \left( \frac{ (C', 0) + \na v }{C+ w} (N + u) - \frac{ (C', 0)}{C} N\right)\\
%&w_t - s w_x - \ep\Del w =  -N w- C u-uw
%\end{aligned}\end{align}
%at $(z = x-st, y, t)$.

By  the Cole-Hopf transformation
\[ q = - \na \ln c = -\frac{\na c}{c}\]
 We translate \eqref{eq:main} into the  divergence type equations of $(n, q)$  
 \begin{align}\label{nq} \begin{aligned}
\pa_t n - \Del n& = \na \cdot (nq), \\
\pa_t q - \ep \Del q & =  -2\ep (q \cdot \na ) q + \na n,
\end{aligned} \end{align}
with $ ((q\cdot \na) q )_i = \sum_{k=1,2} q_k \pa_k q_i$. 
This works only for $m=1$ in \eqref{KS}.
 The Cole-Hopf transformation is initiated in \cite{LiWa} for the stability problem of the  traveling wave equation in the one-dimensional angiogenesis. It  appears in \cite{LiLi} studying the  multi-dimensional  angiogenesis equation for $\ep=0$ with vanishing boundary conditions, too.
 \\
 \indent
 We shall add the condition
 \begin{equation}\label{curl} \na \times q = \pa_2q_1 - \pa_1 q_2 = 0.
 \end{equation}
 This curl free condition on $q$ is preserved in time  until  the classical local solution $(n, q)$ exists. 
It holds that
\begin{equation}\label{na}
  q \cdot \na q = \frac 12 \na |q|^2 
  \end{equation}
  with \eqref{curl}.
We denote   \begin{equation}\label{CP}  P=(\calP,0)= -(C'/C, 0)
\end{equation} %by $P$
 then the perturbation  \eqref{ncp} around $(N, C)$ is written by
\begin{align}\label{up} 
n(x, y, t) = N(x- st) + u (x-st, y, t), 
\quad 
q(x, y, t) = P(x- st) + \na \psi (x-st, y, t).
\end{align} 
%The second equation is converted back to 
%\[ \na \ln c = \na \ln C (x-st)  + \na \psi(x-st, y, t) \]
%which accounts for the unusual form of perturbation in \eqref{ncp}.\\
\indent
Note that  $N, P(:= (\calP, 0)) $ solves
\begin{align}\label{np} \begin{aligned}
- sN'- N''& =  (N\calP)', \\
-s\calP' -  \ep \calP ''& =  -2\ep \calP  \calP'+  N'
\end{aligned} 
\end{align}
%Taking  $\int_{-\infty}^x$ and   $ \int_x^{\infty}$ respectively on the first equation, and 
The boundary conditions of  $(N, P)$ are  inherited from those of $(N, C)$:
\begin{align}\label{np_copy_bd} \begin{aligned}
  & N(-\infty) = (1+\ep) s^2, N(\infty) =0, \quad  \calP(-\infty) = -s , \calP(\infty)=0,\\
  & N'( \pm \infty) = 0, \quad P'(\pm \infty) =0.
  \end{aligned}\end{align}
  For a while we denote
  \[ \na \psi = p.\] So the curl-free condition is endowed  to $p$. 
The perturbation $(u, p)$ satisfies
\begin{align}\label{perturb} \begin{aligned}
&u_t - s u_z - \Del u
=  \na \cdot ( Np + Pu+ up ),\\
&p_t - s p_z - \ep\Del p =  -2\ep \left(  ( (P+p )\cdot \na ) (P+p) - (P\cdot \na) P \right) + \na  u.
\end{aligned}\end{align}

%In the  zero diffusion case,
% we have
%\begin{align}\label{nqzero} \begin{aligned}
%\pa_t n - \Del n& = \na \cdot (nq) \\
%\pa_t q  & =   \na n
%\end{aligned} \end{align} 
%by $\ep =0$.
%The perturbation $(u, p)$ satisfies
%\begin{align}\label{perturbzero} \begin{aligned}
%&u_t - s u_z - \Del u
%=  \na \cdot ( Np + Pu+ up )\\
%&p_t - s p_z =   \na  u.
%\end{aligned}\end{align}
If we let 
 \begin{align*} 
 u = \na \cdot \varphi, \quad  p = \na \psi,
 \end{align*}
the equation \eqref{perturb} is put into the equation on $(\varphi, \psi)$ 
\begin{align}\label{eq0:phipsi_with_ep} \begin{aligned}
\varphi_t - s \varphi_z - \Del \varphi
& =   N \na \psi + P\na \cdot \varphi + \na\cdot \varphi \na \psi, \\
\ps_t - s \ps_z -\ep \Del \ps &=  -2\ep  P \cdot\na \ps -\ep  |\na \ps|^2 + \dv\varphi.
\end{aligned}
\end{align}

\subsection{Main proposition for Theorem \ref{theoremnc}}
%For the main stability theorem \ref{theoremnc},
From now on, we fix a planar traveling wave solution    $(N,C)$ of the  system \eqref{eq:main}
for $\ep=0$
with \eqref{nfront}, \eqref{cfront} and  \eqref{derivative}, obtained in Section \ref{existence_traveling}.
The pertubation $(\vp, \ps)$ satisfies the system 
\begin{align}\label{eq0:phipsi} \begin{aligned}
\varphi_t - s \varphi_z - \Del \varphi
& =   N \na \psi + P\na \cdot \varphi + \na\cdot \varphi \na \psi, \\
\ps_t - s \ps_z  &=     \dv\varphi
\end{aligned}
\end{align}
for $ (t, z, y) \in [0, \infty)\times \bbr \times [0, \la]$ where
$P=-(C'/C,0)$ as in \eqref{CP}.  In addition, recall
 the function spaces and the periodic condition (in $y$-direction) for $(\vp,\ps)$ which are discussed prior to the main Theorem \ref{theoremnc}. \\
 
First 
we state a result on the local existence of solutions for the system \eqref{eq0:phipsi} which can be proved in the standard manner. The proof is presented at the end of the section 3 (Subsection \ref{pf_local_exist}).
\begin{proposition}\label{zerolocal}
For %$\ep\ge 0$,
 $\la>0$ and $M>0$,  there exists $T_0>0$ such that
for any initial data $ (\varphi_0, \psi_0)$ which is $\la-$periodic in $y$ with $ \| \varphi_0\|^2_{H^3_w} + \| \psi_0\|^2_{H^3} + \|\na \psi_0\|^2_{H^2_w}  < M $,
 the system \eqref{eq0:phipsi} has
a unique solution $(\vp, \psi)$ on $[0, T_0]$ which is $\la-$periodic in $y$ 
for each $t\in[0,T_0]$
with $ \varphi|_{t=0} = \varphi_0$ and $\psi|_{t=0} = \psi_0$ and 
\[  \varphi \in L^{\infty}(0, T_0; H^3_w), \quad \psi \in L^{\infty}(0, T_0; H^3), \quad \nabla\psi \in L^{\infty}(0, T_0; H^2_w)  \color{black} .\]
Moreover it holds the following inequality:  %\marginpar{ $\psi \in H^3_w$}
\begin{equation}\label{double}
 \sup_{t\in [0, T_0]}\Big( \| \varphi\|^2_{H^3_w}+ \| \psi \|^2_{H^3 }+\| \na \psi\|^2_{H^2_w}\Big) \le 2 M.
 \end{equation}
\end{proposition}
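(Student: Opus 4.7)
I would prove local existence via a Picard iteration in the weighted Sobolev framework. Set $(\vp^{(0)},\ps^{(0)})=(\vp_0,\ps_0)$ and, given $(\vp^{(n)},\ps^{(n)})$, define $(\vp^{(n+1)},\ps^{(n+1)})$ by solving the decoupled linear system
\[
\pa_t\vp^{(n+1)} - s\pa_z\vp^{(n+1)} - \Del\vp^{(n+1)} = N\na\ps^{(n)} + P\,\dv\vp^{(n)} + \dv\vp^{(n)}\,\na\ps^{(n)},
\]
\[
\pa_t\ps^{(n+1)} - s\pa_z\ps^{(n+1)} = \dv\vp^{(n+1)},
\]
each with initial data $(\vp_0,\ps_0)$ and $\la$-periodicity in $y$. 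The first is a constant-coefficient linear parabolic system, solvable in $L^\infty_tH^3_w\cap L^2_tH^4_w$ by standard theory; the second, being pure transport, is integrated along the characteristics $z\mapsto z+st$, producing $\ps^{(n+1)}(z,y,t)=\ps_0(z+st,y)+\int_0^t(\dv\vp^{(n+1)})(z+s(t-\tau),y,\tau)\,d\tau$, which preserves all the relevant $H^k$ and $H^k_w$ regularities.

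\textbf{A priori estimates.} Differentiating the $\vp$-equation by $\pa^{\al}$ with $|\al|\le 3$, testing against $w\,\pa^{\al}\vp^{(n+1)}$, and integrating over $\bbr\times\mathbf{S}^{\la}$ yields the parabolic dissipation $\|\na\pa^{\al}\vp^{(n+1)}\|_{L^2_w}^2$. The commutator between the drift $-s\pa_z$ and the weight $w(z)=1+e^{sz}$ generates a term of size $\|\pa^{\al}\vp^{(n+1)}\|_{L^2_w}^2$ absorbable by Gr\"onwall, while the source terms are handled using the boundedness of $Nw$, $Pw$ (via the remark after Theorem \ref{theoremnc}), product estimates, and the embedding $H^2\hookrightarrow L^\infty$. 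An analogous weighted estimate applied to the equation for $\na\ps^{(n+1)}$ controls $\|\na\ps^{(n+1)}\|_{H^2_w}$.

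\textbf{Top-order $\ps$ estimate and closure.} The delicate point is the unweighted $\|\ps^{(n+1)}\|_{H^3}$ bound: testing $\pa^{\al}\ps^{(n+1)}$ (with $|\al|=3$) against the transport equation gives
\[\ddt\|\pa^{\al}\ps^{(n+1)}\|_{L^2}^2\ \aleq\ \|\pa^{\al}\ps^{(n+1)}\|_{L^2}\,\|\na^4\vp^{(n+1)}\|_{L^2},\]
which would seem to require $\vp^{(n+1)}\in L^\infty_tH^4$, a bound not furnished by the $H^3_w$ norm pointwise in time. The remedy is to integrate in $t$ and exploit the parabolic gain $\na^4\vp^{(n+1)}\in L^2_tL^2_w$ provided by the preceding $\vp$-estimate; Cauchy--Schwarz in time then yields
\[\|\ps^{(n+1)}(t)\|_{H^3}^2 \le \|\ps_0\|_{H^3}^2 + C\sqrt{t}\,\sup_{[0,t]}\|\ps^{(n+1)}\|_{H^3}\Big(\int_0^t\|\na\vp^{(n+1)}\|_{H^3_w}^2\,d\tau\Big)^{1/2},\]
which is absorbable for small $T_0$. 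Combining all estimates and choosing $T_0$ small in terms of $\la$ and $M$ propagates the bound $\|\vp^{(n+1)}\|_{H^3_w}^2+\|\ps^{(n+1)}\|_{H^3}^2+\|\na\ps^{(n+1)}\|_{H^2_w}^2\le 2M$ through the iteration. Convergence is obtained by running the same energy procedure on the linear system satisfied by the differences $(\vp^{(n+1)}-\vp^{(n)},\ps^{(n+1)}-\ps^{(n)})$ in a weaker norm such as $L^\infty_tL^2_w\times L^\infty_tL^2$ (with $\na(\ps^{(n+1)}-\ps^{(n)})\in L^\infty_tL^2_w$), producing a Cauchy sequence for $T_0$ small; interpolation against the uniform bound upgrades convergence to the regularity in the statement, and uniqueness follows analogously. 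The main obstacle throughout is the non-dissipative character of the $\ps$-equation, and the central device is routing the top-order $\ps$ estimate through the parabolic space-time regularity of $\vp$ together with the smallness of $T_0$.
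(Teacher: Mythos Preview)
Your scheme is correct and your identification of the main difficulty (closing the top-order $\psi$ estimate via the parabolic $L^2_tH^4_w$ gain on $\vp$) is exactly right; this is also implicit in the paper's argument. However, the route is genuinely different from the paper's.

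The paper does \emph{not} decouple the linear terms. It keeps $N\na\psi+P\dv\varphi$ inside the operator and sets up the fixed-point map $\mathcal F:(\tilde\vp,\tilde\ps)\mapsto(\vp,\ps)$ by solving the \emph{coupled} linear system
\[
\vp_t-s\vp_z-\Del\vp=N\na\ps+P\dv\vp+\dv\tilde\vp\,\na\tilde\ps,\qquad \ps_t-s\ps_z=\dv\vp,
\]
via a Galerkin approximation in the unweighted space $X_T=L^\infty_tH^3_p\cap L^2_t\dot H^4_p\times L^\infty_tH^3_p$, and then argues contraction for $\mathcal F$ (only the quadratic term $\dv\tilde\vp\,\na\tilde\ps$ is iterated). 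The weighted bounds are obtained \emph{a posteriori}, by rerunning the Section~3 energy estimates on the local solution already constructed. By contrast, you fully decouple (heat equation for $\vp^{(n+1)}$ with frozen sources, then explicit transport for $\ps^{(n+1)}$) and run the weighted estimates directly inside the iteration. The price you pay for putting $N\na\ps^{(n)}$ and $P\dv\vp^{(n)}$ on the right is a potential loss of one derivative at top order in both the uniform bound and the contraction step; this is harmless because one integration by parts together with $|Nw|+|(Nw)'|+|\calP w|+|(\calP w)'|\le C$ transfers that derivative onto $\vp^{(n+1)}$, where it is absorbed by the parabolic dissipation $\|\na\vp^{(n+1)}\|_{H^3_w}^2$. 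Your sketch already signals this with the ``boundedness of $Nw,\,Pw$'' remark, so the argument closes. The paper's approach avoids this bookkeeping (the contraction touches only the genuinely nonlinear term) at the cost of invoking Galerkin for the variable-coefficient coupled linear problem; yours is more elementary on the linear side but requires the extra integration-by-parts care.
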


%\comment{state link to ($n, c)$ inverting Cole-Hopf translformation}
Let us state the result on the existence of global classical solution to the system \eqref{eq0:phipsi}.
Due to our derivation of \eqref{eq0:phipsi} from \eqref{eq:main}, the main Theorem \ref{theoremnc} is the direct consequence of Proposition \ref{zeroglobal}, which will be
proved in Section $3$. 
\begin{proposition}\label{zeroglobal} 
There exist constants %$\ep_0>0,\, 
$m_0>0,\, C_0>0$, and $\la_0>0$ such that
%for $ 0\le \ep < \ep_0$,
  for $0<\la\le \la_0$ and
for any
 initial data $(\vp_0, \ps_0)$ which is $\la-$periodic in $y$ with %  satisfying 
% $\varphi_0 \in H^3_w $ and $\psi_0 \in H^3$ with $\nabla\psi_0\in H^2_w$. Then  such that if
  $M_0:=\| \vp_0\|^2_{H^3_w} + \| \ps_0\|^2_{H^3}  + \|\nabla\psi_0\|^2_{H^2_w}  \color{black}\le m_0$,  
% the length of transversal direction $\la$ is sufficiently small,
 there exists a unique global classical solution $(\vp,\ps)$  of \eqref{eq0:phipsi}
  which is $\la-$periodic in $y$ for each $t>0$ 
with $ \varphi|_{t=0} = \varphi_0$ and $\psi|_{t=0} = \psi_0$.
Moreover, $ (\phi, \psi)$ satisfies the inequality
%  \marginpar{add \\$\| \na \psi \|_{H^2_w}$}
\[   \sup_{t\in [0, \infty)}\Big(\| \varphi\|^2_{H^3_w} + \| \psi\|^2_{H^3} + \| \na \psi \|^2_{H^2_w}\Big) 
%+ \int_0^{\infty} \int  \sum_{1\le k \le 4} \frac{|\na^k \varphi|^2}{N} + \int_0^{\infty} \int \sum_{1\le k\le 3} N|\na^k \psi|^2 + \frac{|\na^k \psi|^2}{N}
+\int_0^{\infty}  \Big(  \| \na  \vp\|_{H^3_w}^2
+   \| \na  \ps\|_{H^2_w}^2
%+\ep     \| \na^{4} \ps\|_w^2
 \Big)dt
 \le C_0M_0.\]

\end{proposition}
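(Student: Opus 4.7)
The proof is by a standard bootstrap/continuation argument, closed by a weighted energy estimate. Local existence from Proposition \ref{zerolocal} gives a solution on an initial interval with $E(t):=\|\varphi\|_{H^3_w}^2+\|\psi\|_{H^3}^2+\|\nabla\psi\|_{H^2_w}^2$ of size at most $2M_0$. My plan is to show that, whenever the solution satisfies $E(t)\leq 2m_0$ on $[0,T]$ with $\lambda\leq\lambda_0$ and $m_0$ sufficiently small, one has the stronger bound
$$E(t)+\int_0^t \bigl(\|\nabla\varphi\|_{H^3_w}^2+\|\nabla\psi\|_{H^2_w}^2\bigr)\,d\tau\leq C_0 M_0\quad\text{for all }t\in[0,T].$$
Once this is in hand, a standard continuation extends the solution for all $t\geq0$, and uniqueness follows from the same energy method applied at the $L^2_w$ level to the difference of two solutions.

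The central mechanism of the a priori estimate is that the weight $w(z)=1+e^{sz}$ is compatible with the transport $-s\partial_z$. Multiplying the $\varphi$ equation by $w\varphi$ and integrating by parts, the drift produces a coercive contribution $\frac{s^2}{2}\int e^{sz}|\varphi|^2$, while $-\Delta\varphi$ produces the dissipation $\int w|\nabla\varphi|^2$. The coupling $N\nabla\psi$ on the right-hand side is tamed by the uniform bound $Nw\leq M$ from the remark following Theorem \ref{theoremnc}, which keeps the weighted pairing $\int (Nw)\,\varphi\cdot\nabla\psi$ under control by $\|\varphi\|_{L^2_w}\|\nabla\psi\|_{L^2_w}$. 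To get dissipation for $\nabla\psi$ in $H^2_w$, I differentiate the transport equation $\psi_t-s\psi_z=\nabla\cdot\varphi$, run the analogous weighted energy estimate, and pair the source term $\nabla^k(\nabla\cdot\varphi)$ against the $\varphi$-dissipation via Cauchy--Schwarz and Young. Higher-order estimates up to $H^3_w$ are obtained by commuting $\partial_z^i\partial_y^j$ through both equations, and the cubic terms $\nabla\cdot\varphi\cdot\nabla\psi$ and their derivatives are absorbed using Sobolev embedding on the strip together with the a priori smallness $\sqrt{E(t)}\leq\sqrt{2m_0}$.

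The main obstacle is the absence of diffusion in the $\psi$ equation: $\psi$ itself carries no dissipation at any order, and the cross term $\int(Nw)\varphi\cdot\nabla\psi$ cannot be cancelled exactly against the $\psi$ estimate because of the $z$-dependent factor $Nw$. Here I would use the key idea advertised in the abstract, namely decomposing $(\varphi,\psi)$ via Fourier expansion in $y$. For nonzero $y$-modes, the Poincar\'e inequality $\|f\|_{L^2_y}\leq (\lambda/2\pi)\|\partial_y f\|_{L^2_y}$ supplies an extra factor of $\lambda$, so that for $\lambda_0$ small the cross term can be absorbed into the dissipation $\|\nabla\varphi\|_{H^3_w}^2+\|\nabla\psi\|_{H^2_w}^2$. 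For the zero $y$-mode the system reduces to the one-dimensional perturbation equations around the same planar wave $(N,C)$, whose nonlinear stability in a weighted Sobolev framework is precisely the subject of \cite{JinLiWa}; I would adapt those 1D estimates into our weighted setting to close the mean-in-$y$ part. Finally the unweighted bound on $\|\psi\|_{H^3}$ is obtained by integrating $\tfrac{d}{dt}\|\psi\|^2=-2\int\varphi\cdot\nabla\psi$ (and its higher-derivative analogues) in time against the already-controlled dissipations, which is admissible because the time-integrated dissipation is summable.
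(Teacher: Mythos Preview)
Your high-level strategy --- local existence from Proposition \ref{zerolocal}, then a bootstrap closed by a weighted a priori estimate, then continuation --- is exactly what the paper does. The gaps are in the energy estimate itself.

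First, the paper does \emph{not} multiply the $\varphi$-equation by $w\varphi$. It multiplies by $\varphi/N$ (equivalent up to constants since $w\sim 1/N$), which has one decisive advantage: the coupling term becomes $N\nabla\psi\cdot\varphi/N=\nabla\psi\cdot\varphi$ with no $z$-dependent factor, and this cancels \emph{exactly} against $\int(\nabla\cdot\varphi)\psi=-\int\varphi\cdot\nabla\psi$ coming from the $\psi$-equation. There is no leftover ``$\int(Nw)\varphi\cdot\nabla\psi$'' to worry about, and hence no need to Fourier-decompose that term or to import the one-dimensional result of \cite{JinLiWa} as a black box for the zero mode. (The Poincar\'e inequality and the smallness of $\lambda$ \emph{are} used in the paper, but for a different term, $\int\frac{\mathcal P}{N}\varphi^1\nabla\cdot\varphi$; there $\mathcal P/N=-1/s$ is constant, the $\varphi^1\varphi^1_z$ piece integrates to zero, and one subtracts the $y$-mean of $\varphi^1$ before applying Poincar\'e to what remains.)

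Second, and more seriously, your mechanism for producing the time-integrated dissipation $\int_0^t\|\nabla\psi\|_{H^2_w}^2$ is incomplete. Multiplying the differentiated transport equation for $\psi$ by $w\nabla^k\psi$ yields $\frac{s}{2}\int w'|\nabla^k\psi|^2$ on the left, but $w'=se^{sz}$ is comparable to $w$ only on $\{z>0\}$; on $\{z<0\}$ you get nothing, and the source $\int w\,\nabla^k(\nabla\cdot\varphi)\cdot\nabla^k\psi$ cannot then be absorbed by Cauchy--Schwarz. The paper closes this with a separate device (Lemma \ref{lemma1_1}): multiply the $\varphi$-equation by $\nabla\psi$ and use the $\psi$-equation to rewrite $\varphi_t\cdot\nabla\psi$ and $\Delta\varphi\cdot\nabla\psi$, which extracts $\int N|\nabla\psi|^2$ as a good left-hand term controlled by already-estimated quantities. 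Since $N\geq c$ on $\{z<0\}$, this furnishes exactly the missing dissipation there; Lemma \ref{lemma1_2} then splits $\{z>0\}$ and $\{z<0\}$ and combines the two estimates. This step (and its higher-order analogues \eqref{claim1_lemma23_}, \eqref{claim3_lemma23_}) is the heart of the argument and is absent from your outline. Your proposed fallback --- reduce the zero $y$-mode to \cite{JinLiWa} --- would in effect require re-proving these same lemmas in one dimension, together with a treatment of the nonlinear mode-mixing that couples the zero mode to the rest; that is not a black-box citation.
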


In the below we summarize the notations used in the paper.
\[ w(z) : = 1 + e^{sz},\]
\[M(t):=  \sup_{ s\in [0, t]}( \| \varphi(s)\|^2_{H^3_w} 
+ \|\psi(s)\|^2_{H^3} +\|\na \psi(s)\|^2_{H^2_w}),\]
\[ M_0:=  ( \| \varphi_0\|^2_{H^3_w} %+ \| \psi_0\|
+ \|\psi_0\|^2_{H^3}+\|\na \psi_0\|^2_{H^2_w}),  \]
\[  \|  f\|:=\|  f\|_{L^2(\Om)},\]
\[  \|  f\|^2_{k}:=\|  f\|^2_{H^k}=\sum_{|\alpha|=0}^k\int_\Omega|D^\alpha f|^2\,dzdy,\]
\[  \|  f\|^2_{k,w}:=\|  f\|^2_{H^k_w}=\sum_{|\alpha|=0}^k\int_\Omega|D^\alpha f(z,y)|^2\,w(z)dzdy,
\]
\[\int f:=\int_\Omega f(z,y) dzdy,\]
\[\int_0^t g:=\int_0^t g(s) ds.\]

\end{subsection}

\section{Uniform in time estimate when $\ep=0$}\label{sectionuniform}

In this section we introduce the main proposition \ref{uniform_} below which implies
Proposition \ref{zeroglobal}.

\begin{proposition}\label{uniform_}
There exist constants 
%$\ep_0>0,\,
 $\delta_0>0,\, C_0$, and $\la_0>0$ such that
%for $ 0\le \ep < \ep_0$,
% for $0<\la\le \la_0$ and 
 for
  $\la\in(0,\la_0]$, we have the following:\\
 If  $(\varphi, \psi)$ be a local solution of \eqref{eq0:phipsi} on $[0,T]$ for some $T>0$ with $$M(T)\leq\delta\quad \mbox{(the bootstrap assumption)}$$ {for some initial data $(\varphi_0, \psi_0)$ which is $\la$-periodic in $y$}, % from an initial data $(\varphi_0, \psi_0)$. % from the local-existence statement in Proposition \ref{local}.
then   we have %for any $t \in [0, T_0]$
\begin{equation}\label{improve}
  M(T) +  \int_0^{T}  \sum_{l = 1}^4 \| \na^{l} \vp\|_w^2
+\int_0^{T}   \sum_{l = 1}^3\| \na^l \ps\|_w^2 \le C_0M_0.\end{equation}
\end{proposition}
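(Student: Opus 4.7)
The strategy is a standard bootstrap at order three in the weighted Sobolev scale. Under the running assumption $M(T)\le\delta$, I would show a differential inequality of the form
\[
\frac{d}{dt}\mathcal{E}(t) + c_{0}\,\mathcal{D}(t) \;\le\; C\delta^{1/2}\,\mathcal{D}(t),
\]
where $\mathcal{E}(t)$ is equivalent to $M(t)$ and $\mathcal{D}(t)$ is equivalent to the dissipation $\sum_{l=1}^{4}\|\nabla^{l}\varphi\|_{w}^{2}+\sum_{l=1}^{3}\|\nabla^{l}\psi\|_{w}^{2}$ that appears on the left-hand side of \eqref{improve}. Choosing $\delta_{0}$ so that $C\delta_{0}^{1/2}<c_{0}/2$ and integrating in $t$ from $0$ to $T$ would then give \eqref{improve}. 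All estimates are obtained by applying $\partial_{z}^{i}\partial_{y}^{j}$ with $i+j\le 3$ to \eqref{eq0:phipsi}; commutators with the smooth coefficients $N(z),P(z)$ (whose decay is given by Section~\ref{existence_traveling}) and with the weight $w$ produce only lower-order terms that can be absorbed.

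For the $\varphi$-equation I would multiply by $2w\varphi$ componentwise: the transport $-2s\int w\varphi\cdot\varphi_{z}$ yields $s^{2}\int e^{sz}|\varphi|^{2}$ after one integration by parts since $w'(z)=se^{sz}$, while the Laplacian produces the dissipation $2\|\nabla\varphi\|_{w}^{2}$ together with a weight correction absorbable by Young's inequality. The $\psi$-equation is tested in two ways: first by $\psi$ itself, to control the unweighted $\|\psi\|_{H^{3}}$ piece of $\mathcal{E}$ (the transport part vanishes by integration by parts), and second by $w\nabla\psi$ after applying $\nabla$, which produces the key term $s^{2}\|\nabla\psi\|_{e^{sz}}^{2}$ from the transport combined with $w'$. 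The linear coupling $2\int(N\nabla\psi)\cdot w\varphi$ in the $\varphi$-energy is integrated by parts into $-2\int Nw\psi\,\nabla\cdot\varphi - 2\int(Nw)_{z}\psi\varphi^{1}$; with a matched multiplier for the $\psi$-equation (constructed using the pointwise bound $Nw\sim 1$ from the remark following Theorem~\ref{theoremnc}) the $Nw\psi\,\nabla\cdot\varphi$ terms cancel algebraically, and the remainder involving $(Nw)_{z}$ is controlled by Cauchy--Schwarz. Higher-order versions follow after differentiating.

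The quadratic nonlinearities $(\nabla\cdot\varphi)\nabla\psi$ in the $\varphi$-equation and $|\nabla\psi|^{2}$ in the $\psi$-equation are handled by an $L^{\infty}$--$L^{2}$--$L^{2}$ split: the Sobolev embedding $H^{3}(\Omega)\hookrightarrow L^{\infty}(\Omega)$, combined with the equivalence of $\|\cdot\|_{H^{3}}$ and $\|\cdot\|_{H^{3}_{w}}$ norms for functions periodic in $y$ (discussed after the definition of $H^{k}_{w,p}$), gives contributions bounded by $C\,M(T)^{1/2}\,\mathcal{D}(t)\le C\delta^{1/2}\mathcal{D}(t)$, which is absorbable into $c_{0}\mathcal{D}(t)$ once $\delta$ is sufficiently small.

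The main obstacle is that the energy identity for $\nabla\psi$ produces only $\|\nabla\psi\|_{e^{sz}}^{2}$, not the full $\|\nabla\psi\|_{w}^{2}=\|\nabla\psi\|^{2}+\|\nabla\psi\|_{e^{sz}}^{2}$; the missing unweighted $L^{2}$-piece has no dissipative source inside the equations when $\ep=0$. To recover it, I would decompose each unknown into its $y$-mean and its fluctuation, $\psi=\bar\psi(z,t)+\psi^{\ast}$ with $\overline{\psi^{\ast}}=0$, and similarly for $\varphi$. For the fluctuation the Poincar\'e inequality on $\mathbf S^{\la}$ gives $\|\nabla\psi^{\ast}\|^{2}\le(\la/2\pi)^{2}\|\partial_{y}\nabla\psi\|^{2}$, so the unweighted contribution is absorbed into $\|\partial_{y}\nabla\psi\|_{w}^{2}\subset\mathcal{D}(t)$ at the price of a factor $\la^{2}$; this is precisely why the smallness $\la\le\la_{0}$ is imposed. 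For the $y$-mean part $(\bar\varphi,\bar\psi)$, the system reduces to a one-dimensional coupled problem of the type treated in \cite{JinLiWa}, whose 1D weighted energy method, together with the $Nw\sim 1$ identity, supplies the missing coercivity on the unweighted $L^{2}$-norm of $\partial_{z}\bar\psi$ as well. Combining the 1D mean estimate with the 2D zero-mean estimate completes the control of $\mathcal{D}(t)$ and yields \eqref{improve}.
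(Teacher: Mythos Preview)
Your overall bootstrap scheme is correct in spirit, but the mechanism you propose for the central difficulty --- recovering the full weighted dissipation $\int_0^T\|\nabla^l\psi\|_w^2$ when $\ep=0$ --- does not match the paper and, more importantly, does not close.

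You propose to split $\psi=\bar\psi+\psi^*$ and absorb the unweighted part of $\|\nabla\psi^*\|^2$ into $\|\partial_y\nabla\psi\|_w^2$ via Poincar\'e. But $\|\partial_y\nabla\psi\|_w^2$ is itself a $\psi$-dissipation term at one higher order, suffering from exactly the same defect; iterating, at the top level you would need $\|\nabla^4\psi\|_w^2$, which is not available. For the mean part $(\bar\vp,\bar\psi)$ the reduction to the 1D system of \cite{JinLiWa} is not clean either: averaging the nonlinearity $\dv\vp\,\nabla\psi$ produces $\overline{\dv\vp^*\,\nabla\psi^*}$, so the mean system is not decoupled. (A small aside: when $\ep=0$ the $\psi$-equation is simply $\psi_t-s\psi_z=\dv\vp$; there is no $|\nabla\psi|^2$ nonlinearity to estimate.)

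The paper obtains the missing $\psi$-dissipation by a different, structural trick. First (Lemma~\ref{lemma1_1}), it tests the $\vp$-equation against $\nabla\psi$: the right-hand side contains $N\nabla\psi$, so $\int N|\nabla\psi|^2$ appears directly and is bounded by $\vp$-dissipation plus $\sqrt{M(t)}\int\frac{|\nabla\psi|^2}{N}$. Second (Lemma~\ref{lemma1_2}), it tests $\nabla\psi_t-s\nabla\psi_z=\nabla\dv\vp$ against $w\nabla\psi$ and splits the strip into $\{z>0\}$ and $\{z<0\}$: on $\{z>0\}$ the transport term gives $\frac{s}{2}\int w'|\nabla\psi|^2\geq c\int w|\nabla\psi|^2$, while on $\{z<0\}$ one uses $w\le 2\le N/c$ so that $\int_{z<0}w|\nabla\psi|^2\le C\int N|\nabla\psi|^2$, which was just controlled. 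This closes without losing a derivative. The role of the $\la$-smallness in the paper is \emph{not} to recover $\psi$-dissipation but to handle the linear term $\int\frac{\calP}{N}\vp^1\dv\vp$: using the exact identity $\calP/N=-1/s$ (valid when $\ep=0$), this term equals $-\frac{1}{s}\int(\vp^1-\bar\vp^1)\partial_y\vp^2$, and Poincar\'e in $y$ gives the factor $\la$ that makes it absorbable into $\|\nabla\vp\|_w^2$. Your proposal does not address this term at all.

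Finally, the paper's choice of multiplier $\vp/N$ rather than $w\vp$ makes the linear coupling cancel exactly: $\int N\nabla\psi\cdot\frac{\vp}{N}+\int\psi\,\dv\vp=0$ after one integration by parts, avoiding the $(Nw)_z$ remainder you introduce.
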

 This proposition will be proved in   
  Subsection \ref{mainproof}.
  \begin{remark}
   
Note that the constant $C_0$ in the above proposition does not depend on the size of $T>0$. This fact immediately   improves the bootstrap assumption 
 if we take a sufficiently small initial data, for instance, satisfying $C_0M_0<\delta_0/2$.
 Once we prove the proposition, then the global-existence statement in Proposition \ref{zeroglobal} follows the standard continuation argument as shown below: % Proposition \ref{uniform_}:
\end{remark}
\begin{proof}[Proof of Proposition \ref{zeroglobal} from Proposition \ref{uniform_}]
Let $M:=\delta_0/2, \, m_0:=M/C_0$ where $\delta_0$ and $C_0$ are the constants in  Proposition \ref{uniform_}.
We may assume $C_0\ge 1$.  Consider the initial data $(\vp_0, \ps_0)$ with $M(0)=M_0\leq m_0\leq M$. By using this constant $M$ to the local-existence result (Proposition \ref{zerolocal}),
% From the constant $M$ with $M_0\leq M$,
  there exist  $T_0>0$
and the  unique local solution $(\vp, \ps)$ on $[0,T_0]$ with $M(T_0)\leq 2M$. Due to
$M(T_0)\leq2M\leq \delta_0$, we can use the result of Proposition \ref{uniform_} to obtain $M(T_0)\leq C_0 M_0$, which implies $M(T_0)\leq C_0 m_0\leq M$. Hence we can extend the solution up to the time $2T_0$ by Proposition \ref{zerolocal} and we obtain
  $M(2T_0) \leq 2M\leq \delta_0$. Due to Proposition \ref{uniform_}, it implies $M(2T_0)\leq C_0M_0\leq M$. Thus we can repeat this process of extension to get
  $M(kT_0)\leq C_0M_0$ for any $k\in\mathbb{N}$. 
\end{proof}
First we summarize some properties of traveling waves $(N, \mathcal P)$ for later analysis. 
%%%%%%%%%%%%%%%%%%%%%%%
\begin{subsection}{Properties on $(N, \mathcal P)$}
%In the zero diffusion case
From the equations:
\begin{align}\label{np_copy} \begin{aligned}
- sN'- N''& =  (N\calP)', \\
-s\calP' & =   N',
\end{aligned} 
\end{align}
  we observe the relations
 \begin{equation}\label{relation}
 \fn{\calP} = -\frac{1}{s} \quad \mbox{and }  \left( \fn{1} \right)^{'' }= s\left( \fn{1} \right)^{'}
 \end{equation}
%where $P = (\calP, 0)$.
from \eqref{formula:NP} where $s>0$, which is fixed throughout this paper, is the speed of our planar traveling wave $(N,C)$.
 In fact, $(N, P)$ has the explicit formula: % $P = - ( \frac {e^{-sz}N_0}{s} C, 0)$,
\begin{equation}\label{formula:NP}  N = \frac{c_+ N_0 e^{-sz}}{c_+ \frac{N_0}{s^2} e^{-sz} +1}, \quad P = - \left (  \frac{c_+ N_0e^{-sz}}{ c_+ \frac{N_0}{s} e^{-sz} +s}, 0 \right).
\end{equation}
Note that 
%\begin{comment} s\end{comment}
%$s>0$ is a fixed constant,  and 
$N$ and $\calP$ are monotone with
the boundary condition \eqref{np_copy_bd}. %$\calP(-\infty) = -s$, $\calP(\infty) = 0$. 
%\comment{Zero diffusion case. Fill in.} \\
%For non-zero chemical diffusion case, 
%recall the equation \eqref{np_copy} (or \eqref{np}) and boundary conditions \eqref{np_copy_bd}:
%\begin{align}\label{np_copy} \begin{aligned}
%- sN'- N''& =  (N\calP)' \\
%-s\calP' -  \ep \calP ''& =  -2\ep \calP  \calP'+  N'
%\end{aligned} 
%\end{align}
%Moreover we have 
%\begin{align}\label{np_copy_bd} \begin{aligned} & N(-\infty) = %(1+\ep)
% s^2, \quad N(\infty) =0,   \quad  \calP(-\infty) = -s , \quad  \calP(\infty)=0, \\
%  &
%  N'( \pm \infty) = 0, \quad P'(\pm \infty) =0.\end{aligned} 
%\end{align}
%Let me denote $N_{\ep=0}, \calP_{\ep=0}$ be the solution with $\ep=0$ and   $N_\ep, \calP_\ep$ be the solution with $\ep>0$.\\
%Note that $(N_{\ep=0})'=-sN_{\ep=0}(N_{\ep=0}+1)$, $\calP_{\ep=0}=-(1/s)N_{\ep=0}$, $
%   \left( 1/N_{\ep=0}\right)^{'' }= s\left( 1/N_{\ep=0} \right)^{'}$ and
    
%$$N_{\ep=0} = \frac{c_+ N_0 e^{-sz}}{c_+ \frac{N_0}{s^2} e^{-sz} +1}, \quad P_{\ep=0}=(\calP_{\ep=0},0) = - \left (  \frac{c_+ N_0e^{-sz}}{ c_+ \frac{N_0}{s} e^{-sz} +s}, 0 \right).$$
 
%Moreover, we have the following lemma.
In the following lemma, we summarize some properties of $(N,\calP)$.
% which can be proved directly from the explicit formula \eqref{formula:NP}. 
%As a side remark, this lemma holds for the solution $(N, C)$ of the system \eqref{NC} even for $\ep>0$ while the relations \eqref{relation} work only for the case $\ep=0$. . %When $\ep=0$, \eqref{relation} holds from the explicit formula of \eqref{formula:NP}.
\begin{lemma}\label{wave_prop}
%For $s>0$, t
%For the f $(N,\calP)$ of \eqref{np_copy} with $s>0$,
There exists a constant  %$\ep_0>0$, %$0<c_1<c_2$,
%$0<\delta<1$,
%and 
$M>0$ such that 
%for any $\ep\in[0,\ep_0]$, 
we have:\\
 $$ \frac{w}{M}\leq\frac{1}{N }\leq Mw,$$
% $$ 
% %(1-\delta)\frac{1}{s}\leq
% \Big|\frac{\calP_\ep}{N_\ep}\Big|\leq (1+\delta)\frac{1}{s}$$
% $$ 0<N_\ep\leq(1+\delta)(1+\ep_0)s^2$$
 %$$-(1+\delta)s\leq \calP_\ep<0$$
 %$$ (\frac{1}{N_\ep})'\leq s\Big(2(1-\delta_2)\frac{1}{N_\ep}\Big)$$
% $$ \Big(\frac{1}{N_\ep}\Big)'\geq0$$
  %$$|\calP'_\ep|<(1+\delta)(1+\epsilon_0)s^2$$
$$ |{N^{ (k)}}| \leq M, \quad   |{\calP^{(k)}}|  \leq M,\quad \mbox{ for } 0\leq k\le 2,\quad \mbox{ and } $$
$$
\frac{N'}{N}=-(\calP+s),   \quad 
\Big|(\frac{1}{N})'\Big|+\Big|(\frac{1}{N})''
\Big|\leq \frac{M}{N},\quad 
\Big|(\frac{1}{\sqrt {N}})'\Big|\leq \frac{M}{\sqrt {N}}.$$
($N^{(k)}$ is any $k-$th derivative of $N$)
\end{lemma}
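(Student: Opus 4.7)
The proof of this lemma is essentially a direct computation using the closed-form expressions for $(N,\calP)$ already derived in Subsection \ref{existence_traveling}, so my plan is to reduce every claim to an algebraic identity in the explicit formula \eqref{formula:NP} and then read off the stated bounds.

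First I would rewrite $1/N$ in a form that exhibits its growth. From \eqref{formula:NP},
\[ \frac{1}{N(z)} = \frac{c_+ N_0 e^{-sz}/s^2 + 1}{c_+ N_0 e^{-sz}} = \frac{1}{s^2} + \frac{1}{c_+ N_0}\,e^{sz}, \]
which is a positive affine combination of $1$ and $e^{sz}$. Comparing with $w(z)=1+e^{sz}$ gives the two-sided bound $w/M\le 1/N\le Mw$ at once, with $M$ depending only on $s$ and $c_+N_0$. This same representation makes the remaining derivative claims trivial: $(1/N)' = s\,e^{sz}/(c_+N_0)$ and $(1/N)''=s^2\,e^{sz}/(c_+N_0)$, and since $e^{sz}/(c_+N_0)\le 1/N$, we immediately read off $|(1/N)'|+|(1/N)''|\le M/N$.

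Next I would handle the pointwise bounds on $N,\calP$ and their first two derivatives. From \eqref{np_copy} we have $-s\calP'=N'$; integrating and using the decay $N(\infty)=\calP(\infty)=0$ from \eqref{np_copy_bd} yields the clean identity $\calP = -N/s$, which matches the explicit formula for $\calP$ in \eqref{formula:NP} and already proves \eqref{relation}. Hence it suffices to bound the derivatives of $N$. Differentiating $N = Ae^{-sz}/(Be^{-sz}+1)$ (with $A=c_+N_0$, $B=A/s^2$) shows every derivative is a rational expression in $e^{-sz}$ of the form $e^{-sz}\,R_k(e^{-sz})/(Be^{-sz}+1)^{k+1}$, hence uniformly bounded on $\bbr$. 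The bounds on $\calP^{(k)}$ follow from $\calP=-N/s$.

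For the identity $N'/N=-(\calP+s)$, I would compute directly: using $N/s^2 = Be^{-sz}/(Be^{-sz}+1)$, one finds
\[ \frac{N'}{N} = -\frac{s}{Be^{-sz}+1} = -s\Big(1-\frac{N}{s^2}\Big) = -s+\frac{N}{s} = -(\calP+s), \]
where in the last step $\calP=-N/s$ was used. Finally, for $(1/\sqrt{N})'=-\tfrac{1}{2}(N'/N)\cdot(1/\sqrt{N})$, the identity just proved bounds $|N'/N|=|\calP+s|$ uniformly by $2s$, giving $|(1/\sqrt{N})'|\le M/\sqrt{N}$ as required.

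There is no real obstacle here: everything reduces to manipulating the explicit formula \eqref{formula:NP} together with the first-order relation $\calP=-N/s$ coming from integrating the second line of \eqref{np_copy}. The only mild point to be careful about is choosing a single constant $M$ that simultaneously dominates all the expressions above, which is harmless since each is continuous on $\bbr$ with finite limits at $\pm\infty$.
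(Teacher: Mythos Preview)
Your proof is correct and follows essentially the same route as the paper, which simply says the first inequality comes from the asymptotics of the wave (Lemma~\ref{KPPtheorem}) and the rest from \eqref{np_copy} and \eqref{relation}. Your observation that $1/N = 1/s^2 + e^{sz}/(c_+N_0)$ is exactly affine in $e^{sz}$ is in fact a cleaner way to get the two-sided comparison with $w$ than invoking the asymptotics, and the remaining identities you derive (notably $\calP=-N/s$ and $N'/N=-(\calP+s)$) are precisely the content of \eqref{relation} and the first line of \eqref{np_copy}.
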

\begin{proof}
The first inequality follows from Lemma \ref{KPPtheorem}. The others are easily obtained from the equation \eqref{np_copy} and \eqref{relation}.
\end{proof}
\end{subsection}
%\newpage
\begin{subsection}{Proof of Proposition \ref{uniform_}}\label{mainproof}
 
%We present a series of lemmas by doing energy estimates.

{
First assume $0<\delta_0\leq
%\delta_0\leq
 1$. During the proof, we will take $\delta_0$ as small as needed to have the weighted energy estimate \eqref{improve} (e.g. Lemma \ref{lemma1_2}).}
Let us remind the system \eqref{eq0:phipsi}
\begin{align}\label{eq:phipsi} \begin{aligned}
\varphi_t - s \varphi_z - \Del \varphi
 &=   N
 %_{\ep}
  \na \psi + P
  %_{\ep}
  \na \cdot \varphi + \na\cdot \varphi \na \psi, \\
\ps_t - s \ps_z  &=   \dv\varphi \\ 
\end{aligned}
\end{align} for $ (t, z, y) \in [0,T]\times \bbr \times [0, \la]$. %From now on, we drop the $\ep$ index in $N_{\ep}$ and $P_{\ep}$ for simple presentation. 
We will collect a few lemmas on energy estimates before proving Proposition \ref{uniform_}. Here the constants $C$ which will appear in the following lemmas are independent of $T>0$. %From now on, the time variable $t$ lies on the interval $[0,T]$. 
\begin{lemma}\label{lemma0_}
If  $\la>0$ %and $\ep\ge 0$
 is sufficiently small, we have the following: %for any $t\in[0,T]$,
\begin{equation}\label{eq_lemma0_}
\| \psi \|^2 + \| \varphi\|_w^2 + \int_0^t \| \na \varphi\|_w^2
%+{\ep}\int_0^t \|\na \psi\|^2
\le C ( \| \psi_0\|^2 + \| \varphi_0\|_w^2 )+ C {{M(t)}} \int_0^t 
 \|\na \psi\|_w^2.
%\int_\mathbb{R} \fn{|\na \psi|^2}.
\end{equation}
\end{lemma}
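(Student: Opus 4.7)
The plan is to derive a weighted energy identity in which the critical linear cross-term cancels algebraically between the two equations. First I multiply the $\varphi$-equation in \eqref{eq:phipsi} by $w\varphi$ and integrate over $\Omega$. Using the identity $w''=sw'$ (which holds for $w=1+e^{sz}$), the weight contributions from the drift $-s\varphi_z$ and from integration by parts against $-\Delta\varphi$ cancel, yielding
\[
\tfrac12\tfrac{d}{dt}\|\varphi\|_w^2+\int w|\nabla\varphi|^2 \;=\; \int Nw\,\nabla\psi\cdot\varphi \;+\; \int \calP w\,(\nabla\cdot\varphi)\,\varphi^1 \;+\; \int w\,(\nabla\cdot\varphi)(\nabla\psi\cdot\varphi).
\]
In parallel, I multiply the $\psi$-equation by $Nw\,\psi$ and integrate by parts (using that $N,w$ depend only on $z$), obtaining
\[
\tfrac12\tfrac{d}{dt}\!\int Nw\,\psi^2 + \tfrac{s}{2}\!\int (Nw)'\psi^2 \;=\; -\int Nw\,\nabla\psi\cdot\varphi \;-\; \int (Nw)'\,\varphi^1\,\psi.
\]
Summing the two identities produces an \emph{exact cancellation} of the linear cross-term $\int Nw\,\nabla\psi\cdot\varphi$, which is the heart of the argument.

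Exploiting the translation invariance of the wave, I normalize $N_0$ so that $A:=c_+N_0/s^2=1$ in the explicit formulas of Subsection~\ref{existence_traveling}. A direct computation then collapses the profiles to
\[
Nw\equiv s^2,\qquad \calP w\equiv -s,\qquad (Nw)'\equiv 0,
\]
which wipes out the residual $(Nw)'$ terms. The only surviving linear piece is therefore $-s\!\int(\nabla\cdot\varphi)\varphi^1$. An IBP in $z$ kills the $\varphi^1\partial_z\varphi^1$ contribution, and the $\varphi^1\partial_y\varphi^2$ contribution becomes $s\!\int\varphi^2\,\partial_y\varphi^1$ after an IBP in $y$. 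Decomposing $\varphi^j=\bar\varphi^j(z)+\tilde\varphi^j(z,y)$ into its $y$-mean and $y$-mean-zero parts, the mean-mean term vanishes (since $\partial_y\bar\varphi^1=0$), and the transversal Poincar\'e inequality $\|\tilde\varphi^2\|_{L^2_y}\le \tfrac{\la}{2\pi}\|\partial_y\varphi^2\|_{L^2_y}$ bounds the remainder by $Cs\la\,\|\nabla\varphi\|_w^2$, which is absorbed into $\int w|\nabla\varphi|^2$ once $\la$ is small.

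The trilinear term is handled by 2D Sobolev embedding and the bootstrap: $\|\varphi\|_{L^\infty}\le C\|\varphi\|_{H^2_w}\le C\sqrt{M(t)}$, so Young's inequality gives $\bigl|\!\int w(\nabla\cdot\varphi)(\nabla\psi\cdot\varphi)\bigr|\le \tfrac14\|\nabla\varphi\|_w^2 + CM(t)\|\nabla\psi\|_w^2$. Collecting all contributions yields
\[
\tfrac{d}{dt}\bigl(\|\varphi\|_w^2+s^2\|\psi\|^2\bigr)+\tfrac12\|\nabla\varphi\|_w^2 \;\le\; CM(t)\,\|\nabla\psi\|_w^2,
\]
and integration in time produces \eqref{eq_lemma0_} with a constant $C$ independent of $t$. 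The main obstacle is exactly the first step: a naive Young's-inequality bound on $\int Nw\,\nabla\psi\cdot\varphi$ would leave a $\|\nabla\psi\|_w^2$ term on the right whose coefficient does \emph{not} vanish with $M(t)$, producing a Gronwall-type exponential growth incompatible with the $T$-independent constant demanded by Proposition~\ref{uniform_}. It is the specific algebraic structure $Nw\equiv s^2$, $\calP w\equiv -s$ coming from the Cole--Hopf derivation of $(N,\calP)$, together with the $y$-Poincar\'e estimate on the thin strip, that makes the uniform bound possible.
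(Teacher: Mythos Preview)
Your proof is correct and follows essentially the same strategy as the paper's: the paper multiplies the $\varphi$-equation by $\varphi/N$ and the $\psi$-equation by $\psi$, which (since $1/N$ and $w$ are proportional under your normalization $A=1$) is exactly your pairing up to the constant factor $s^2$. The algebraic cancellations you exploit --- the linear cross-term cancellation, the weight identity $w''=sw'$ (equivalently $(1/N)''=s(1/N)'$), the constancy $\calP w=-s$ (equivalently $\calP/N=-1/s$), and the Poincar\'e treatment of the residual $\varphi^1\partial_y\varphi^2$ piece --- are precisely those in the paper's proof, which however works directly with the weight $1/N$ and thereby avoids the translation-normalization step.
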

\begin{remark}
This estimate \eqref{eq_lemma0_} is not closed due to the second term in the right-hand side. This obstacle will be studied sequentially in  the next lemmas. 

\end{remark}
\begin{proof}

Multiply  $\fn{\varphi}$ to the $\varphi$ equation and $\psi$ to the $\psi$ equation. 
Integrating by parts, we have
\begin{align*}
&\frac 12 \ddt \left( \int \fn{ |\varphi|^2} +\int  |\psi|^2 \right) 
+ \int\fn{ \sum_{i} |\na \varphi^i|^2} %+ \ep\int |\na \psi|^2
+ 
\frac s 2 \int |\varphi|^2 \left( \fn{1}\right)'\\
&= \frac 12 \int |\varphi|^2 \left(  \fn{1}\right)''+
\int \fn{\calP} \varphi^1 \na \cdot \varphi  
+  \int \fn{\varphi \cdot \na\psi} \na\cdot \varphi.
%-%\underbrace{ 2\ep \int P\cdot \na\ps \psi }_{= 
% 2\ep \int \calP \ps_z \psi
% %}
%- \ep \int |\na \ps|^2 \ps.
\end{align*}
We estimate the cubic term first:
\begin{align*}
  \int \fn{\varphi \cdot \na\psi} \na\cdot \varphi & \le  { C\int \fn{\| \vp\|_{L^{\infty}}^2| \na \psi|^2} 
 + \frac{1}{4} \int \fn{|\na \vp|^2}  \le C  {M(t)} \int \fn{| \na \psi|^2}  + \frac{1}{4}\int \fn{|\na \vp|^2}
 }
\end{align*}
 where we used $\|\vp\|_{L^\infty}\leq C \|\vp\|_{H^2}\leq C \sqrt{M(t)}$ by the Sobolev embedding. % and $M(t)\leq M(T)\leq 1$.\\

%To cover   the quadratic term $\int \fn{\calP} \varphi^1\na \cdot \varphi$,
%We split into
%two cases: $\ep=0$ and $ \ep>0$.

% Without diffusion case($\ep=0$),
 
Thanks to the relations \eqref{relation}, the quadratic term becomes
\begin{align}\label{cancelation}
 \int \fn{\calP} \varphi^1\na \cdot \varphi  & =
-\frac 1s \int   \varphi^1(\pa_z\vp^1+\pa_y\vp^2)
%=-\frac 1s \int    \pa_z((\vp^1)^2)+(\vp^1- \overline{\vp}^1)\pa_y \vp^2  
=-\frac 1s \int     (\vp^1- \overline{\vp }^1)\pa_y \vp^2  
  \end{align} where  $\overline{\vp}(z)=(1/\la)\int_{[0,\la]}\vp(z,y)dy$ is the average in $y$
  of $\vp$ for each $z$. Note that we used the periodic condition for $\vp^2$. As a result, we get  
  \begin{align*}
 \Big|\int \fn{\calP} \varphi^1\na \cdot \varphi\Big|  & 
  \le C\frac {\la}{s} \| \pa_y\vp^1\|_2 \| \pa_y\vp^2\|_2 \le C \la   \| \na\vp\|^2_w\\
  \end{align*}    where we used the Poincar\'{e} inequality 
  \begin{equation}\label{ineq:poincare2_periodic} 
\| \vp(z,\cdot_y)-\overline{\vp}(z) \|_{L^2_y([0,\la])} \le C\la \| \pa_{y} \vp(z,\cdot_y)\|_{L^2_y([0,\la])} \quad\mbox{ for } z\in\mathbb{R}.
\end{equation} Also we have
$$ {\frac s 2 \int |\varphi|^2 \left( \fn{1}\right)'- \frac 12 \int |\varphi|^2 \left(  \fn{1}\right)''} { = 0}$$ thanks to the relations \eqref{relation}.
By assuming smallness of $\la$,   we get the following: 
\begin{align*}
&\frac 12 \ddt \left( \int \fn{ |\varphi|^2} +\int  |\psi|^2 \right) +\frac 18 \int\fn{   |\na \varphi|^2}   
 \leq   C {{M(t)}} \int \fn{| \na \psi|^2}.
\end{align*} It proves Lemma \ref{lemma0_}.\\ % for $\ep=0$.\\

\end{proof}

%%%%%%%%%%%%%%%%%%%%%%%%%%%%%%%%%%%%%%555
%first order
%%%%%%%%%%%%%%%%%%%%%%%%%%%%%%%55
Let's study the first order estimate:
\begin{lemma}\label{lemma1_0} If    $\la>0$  is small enough, then
we have 
\begin{equation}\begin{split}\label{ineq:lem1_}
&\| \na \vp\|_w^2  + \| \na \ps\|^2+ \int_0^t \| \na^2 \vp\|_w^2 \\
%+ \ep\int_0^t \|\na^2 \psi\|^2\\
&\le C ( \| \na \vp_0\|_w^2 + \| \na \ps_0\|^2+  \| \psi_0\|^2 + \| \varphi_0\|_w^2 ) 
% \phantom{XXXXX}
+  C \int_0^t \int {N} |\na \ps|^2 + C {{M(t)} }\int_0^t \int \fn{\abs{\na \ps}}.
\end{split}\end{equation}\end{lemma}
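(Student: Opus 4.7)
The plan is to imitate the proof of Lemma \ref{lemma0_} at the first-derivative level: for each $i\in\{z,y\}$, apply $\pa_i$ to the $\vp$-equation and test against $\pa_i\vp/N$, while applying $\pa_i$ to the $\ps$-equation and testing against $\pa_i\ps$ (no weight, so as to match the unweighted $\|\na\ps\|^2$ on the left-hand side of \eqref{ineq:lem1_}). Summing over $i$, the transport term $-s\pa_z$ combined with the $1/N$ weight contributes $\tfrac{s}{2}\int|\pa_i\vp|^2(1/N)'$, while the integration by parts against the weighted Laplacian contributes $-\tfrac{1}{2}\int|\pa_i\vp|^2(1/N)''$; these cancel exactly via the identity $(1/N)''=s(1/N)'$ from Lemma \ref{wave_prop}, as in Lemma \ref{lemma0_}. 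What remains on the left is $\tfrac{d}{dt}\tfrac{1}{2}(\int|\na\vp|^2/N+\|\na\ps\|^2)+\int|\na^2\vp|^2/N$, which by $1/N\sim w$ is equivalent to the left-hand side of \eqref{ineq:lem1_}.

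The essential cancellation is of the principal $\vp$--$\ps$ coupling. Expanding $\pa_i(N\na\ps)=\delta_{iz}N'\na\ps+N\pa_i\na\ps$ and pairing the second piece with $\pa_i\vp/N$ produces $\sum_i\int\pa_i\vp\cdot\pa_i\na\ps=\sum_{i,k}\int\pa_i\vp^k\pa_i\pa_k\ps$; integrating by parts in the index $k$ converts this into $-\int\na\dv\vp\cdot\na\ps$, which exactly cancels the $+\int\na\dv\vp\cdot\na\ps$ arising from testing the $\pa_i\ps$-equation against $\pa_i\ps$. Hence no $\ps$-dissipation is needed at this stage.

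The remaining contributions split into three types. (i) The residual quadratic term $\int(N'/N)\pa_z\vp\cdot\na\ps$: using $|N'|^2/N\leq CN$ (from the asymptotic profile of $N$), a weighted Cauchy--Schwarz and Young give $\leq\eta\int|\pa_z\vp|^2/N+C\int N|\na\ps|^2$, matching the first RHS term of \eqref{ineq:lem1_} after time integration. (ii) The $P\pa_i\dv\vp$-contribution: using $\calP/N=-1/s$ and integration by parts it collapses to $-(1/s)\int\Del\vp^1\pa_y\vp^2$, since $\int\Del\vp^1\pa_z\vp^1=0$ by direct integration by parts. Because $\pa_y\vp^2$ has zero mean in $y$ by periodicity, the Poincar\'{e} inequality \eqref{ineq:poincare2_periodic} yields $\|\pa_y\vp^2\|_{L^2}\leq C\la\|\na^2\vp\|_{L^2}\leq C\la\|\na^2\vp\|_w$, so this term is bounded by $C\la\|\na^2\vp\|_w^2$ and absorbed into the dissipation for $\la$ small. (iii) The cubic terms from $\pa_i(\dv\vp\,\na\ps)$: the two-dimensional Sobolev embedding $H^2\hookrightarrow L^\infty$ yields $\|\na\vp\|_{L^\infty},\|\na\ps\|_{L^\infty},\|\dv\vp\|_{L^\infty}\leq C\sqrt{M(t)}$, and Young's inequality produces bounds of the form $\eta\|\na^2\vp\|_w^2+CM(t)\int|\na\ps|^2/N$, the second piece matching the last RHS term of \eqref{ineq:lem1_}. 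Choosing $\eta$ and $\la$ small to absorb the $\eta\|\na^2\vp\|_w^2$ pieces into the dissipation, time-integrating, and adding a large multiple of Lemma \ref{lemma0_} to dispose of the leftover $\int_0^t\|\na\vp\|_w^2$ contributions completes the proof. The main obstacle is precisely the absence of $\ps$-dissipation when $\ep=0$; it is resolved by the structural cancellation of the $\int\na\dv\vp\cdot\na\ps$ cross terms and by the algebraic identity $\calP/N=-1/s$, which converts the potentially dangerous $P$-coupling into a harmless term amenable to the thin-strip Poincar\'{e} inequality.
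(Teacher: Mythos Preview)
Your argument is correct and follows essentially the same route as the paper: differentiate once, test $\pa_i\vp$ against $\pa_i\vp/N$ and $\pa_i\ps$ against $\pa_i\ps$ (unweighted), exploit the cancellation between the $N\,\pa_i\na\ps$ contribution and the $\pa_i\dv\vp$ contribution, and finally invoke Lemma~\ref{lemma0_} to absorb the leftover $\int_0^t\|\na\vp\|_w^2$ terms (which is precisely where the $CM(t)\int_0^t\|\na\ps\|_w^2$ on the right of \eqref{ineq:lem1_} appears). Two minor differences are worth noting. First, for the $P\,\pa_i\dv\vp$ term the paper does \emph{not} use $\calP/N=-1/s$ or Poincar\'e; it simply bounds $|\calP|\le C$ and applies Young to get $\tfrac14\|\na^2\vp\|_w^2+C\|\na\vp\|_w^2$, so smallness of $\la$ enters only through Lemma~\ref{lemma0_}. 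Your approach also works but is more elaborate than needed here. Second, you omit the commutator piece $\calP'\,\dv\vp$ from $\pa_z(P\dv\vp)$; this is harmless since it is bounded by $C\|\na\vp\|_w^2$ and absorbed by Lemma~\ref{lemma0_}, but it should be mentioned. The paper also does not use the exact cancellation $(1/N)''=s(1/N)'$ at this order, bounding those terms instead via $|(1/N)'|\le C/N$; your cleaner observation is valid for $\ep=0$ but the paper's cruder estimate is the one that survives when $\ep>0$.
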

\begin{remark}
%\begin{enumerate}
%\item 
This estimate \eqref{ineq:lem1_} is not closed yet due to the last two terms. Later in  Lemma \ref{lemma01_} (see the estimate \eqref{ineq:lem1_closed}), it will be closed by Lemma \ref{lemma1_1} and Lemma \ref{lemma1_2}. Indeed, the first one $\int_0^t \int {N} |\na \ps|^2 $ among these two  will be covered by  the second one $\sqrt{M(t)} \int_0^t \int \fn{\abs{\na \ps}}$ (see Lemma \ref{lemma1_1}) while the second term will be absorbed into the dissipation term 
 $\int_0^t \| \na^2 \vp\|_w^2 $ on the left-hand side of \eqref{ineq:lem1_} (see Lemma \ref{lemma1_2}).
%\end{enumerate}
\end{remark}
\begin{proof}
Differentiating in $z$, we have
\begin{align*}
\vp_{tz} - s\vp_{zz} - \Del \vp_z & = N' \na\psi + N \na \psi_z + P' \dvg\vp + P \dvg \vp_z + \na \psi_z \dvg \vp + \na\psi \dvg \vp_z,\\
\psi_{tz} -s\psi_{zz} 
%-\ep \Del \ps_z 
& =
% -2\ep  (P \cdot\na \ps)_z -\ep  (|\na \ps|^2)_z  +
  \dvg \vp_z.
%\ps_t - s \ps_x -\ep \Del \ps &=  -2\ep  P \na \ps -\ep  |\na \ps|^2 + \dv\varphi
\end{align*}
Multiply  $\fn{\varphi_z}$ and $\psi_z$ to the above equation. 
Integrating by parts, we have
\begin{align*}
 \frac 12 \ddt \lr{ \int \fn {\abs{\vp_z} } + \abs{\ps_z}} + \int \sum_{ij}\fn{ \abs{\pa_j \vp^i_{z}}} & = %\underbrace
 { \frac 12 \int \abs{\vp_z} \left( \fn{1} \right)''- \frac s2 \int \abs{\vp_z} \Fn' }
% _{\leq  \frac{1}{2s}\int|\vp_{zz}|^2(\frac{1}{N})'\leq \frac 12 \int\frac{|\vp_{zz}|^2}{N}}
 \\
 & +\int \fn{N'} \na \ps \vp_z  +\int \fn{P'} \dv \vp \vp_z + \int \fn{P} \dv \vp_z \vp_z\\
 &+  \int \na \ps_z \dv \vp \fn{\vp_z} + \int \na\ps \dv \vp_z \fn{\vp_z}.\\
%& -\underbrace{ 2\ep \int (P\cdot \na\ps)_z \psi_z }_
%{=  2\ep \int (\calP \ps_z)_z \psi_z}
%- \ep \int (|\na \ps|^2)_z \ps_z.
\end{align*}
Similarly,  %for
%\begin{align*}
%\vp_{ty} - s\vp_{zy} - \Del \vp_y & = N \na \psi_y + P \dvg \vp_y + \na \psi_y \dvg \vp + \na\psi \dvg \vp_y\\
%\psi_{ty} -s\psi_{zy} -\ep \Del \ps_y & = -2\ep  (P\cdot \na \ps)_y -\ep  (|\na \ps|^2)_y  + \dvg \vp_y\\
%\end{align*}
we get
\begin{align*}
\frac 12 \ddt \lr{ \int \fn {\abs{\vp_y} } + \abs{\ps_y}} +\int \sum_{ij}\fn{ \abs{\pa_j \vp^i_{y}}}  
   &=  
   %\underbrace
   { \frac 12 \int \abs{\vp_y} \left( \fn{1} \right)'' - \frac s2 \int \abs{\vp_y} \Fn' }
   %_{\leq  \frac{1}{2s}\int|\vp_{yz}|^2(\frac{1}{N})'\leq \frac 12 \int\frac{|\vp_{yz}|^2}{N}} 
 \\
   &+\int \fn{P} \dv \vp_y \vp_y \\
   &+ \int \na \ps_y \dv \vp \fn{\vp_y} + \int \na\ps \dv \vp_y \fn{\vp_y}.\\
%  & -\underbrace{ 2\ep \int (P\cdot \na\ps)_y \psi_y }_
%{=  2\ep \int \calP \ps_{zy} \psi_y}
%- \ep \int (|\na \ps|^2)_y \ps_y.
\end{align*}

First, we estimate by using Lemma \ref{wave_prop}
\begin{align*}
&\frac 12 \int |\na\vp|^2 \lr{ \fn{1}}''-\frac s 2 \int |\na\varphi|^2 \left( \fn{1}\right)'
\le -\int \na\vp\cdot\na\vp_z  \left( \fn{1}\right)'+\frac s 2 \int |\na\varphi|^2 \left|\left( \fn{1}\right)'\right|\\
&\le  C\int |\na\vp||\na\vp_z |  \left( \fn{1}\right) +\frac {Cs}{2} \int |\na\varphi|^2  \left( \fn{1}\right) 
\le  \frac{1}{4}\int  |\na\vp_z |^2\left( \fn{1}\right)+(C+\frac {Cs}{2} ) \int |\na\varphi|^2  \left( \fn{1}\right).
 \end{align*}
We estimate the quadratic terms as follows:
\begin{align*}
\int \fn{P} \dv \vp_z \vp_z+\int \fn{P} \dv \vp_y \vp_y  \le
 \| P\|_{L^{\infty}} (\|  \frac{\na\vp_z}{ \sqrt{N}} \|\| \frac{\vp_z}{\sqrt{N}}\| 
+\|    \frac{\na\vp_y}{ \sqrt{N}} \|\| \frac{\vp_y}{\sqrt{N}}\| )
 \le
\frac{1}{4}\|  \frac{\na^2\vp}{ \sqrt{N}} \|^2+C\| \frac{\na \vp}{\sqrt{N}}\|^2,
\end{align*}
\begin{align*}
\int \fn{N'} \na \ps \vp_z & \le \| \fn{N'}\|_{L^{\infty}}\| {\sqrt N}{\na\ps}\| \| \frac{\vp_z}{ \sqrt{N}} \|
\le C\| {\sqrt N}{\na\ps}\|^2+ C \| \frac{\na\vp}{ \sqrt{N}} \|^2,\\
\int \fn{P'} \dv \vp \vp_z  &\le 
\| P'\|_{L^{\infty}} \| \frac{\na \vp}{\sqrt N} \|^2
\le 
C\|\frac{\na \vp}{\sqrt N}  \|^2.
\end{align*}

The cubic terms are bounded by  
\begin{align*}
\int  | \na\ps|| \na\vp| \fn{|\na^2\vp|} 
+\int  | \na\ps|| \na\vp|^2 \underbrace{|({1}/{N})'|}_{\leq C/{N}}
%+ \int |\na \ps \dv \vp \fn{\vp_{zz}}| 
%+ \int | \na \ps \dv \vp \vp_z \frac{N'}{N^2} | \\
%& \le C \sqrt{M(t)} ( \| \na^2 \vp/\sqrt{N} \|^2  + \| \na \vp/\sqrt{N} \|^2)\\
& {\le \frac{1}{4}  \| \na^2 \vp/\sqrt{N} \|^2 + C(M(t)+\sqrt{M(t)})\| \na \vp /\sqrt{N}\|^2}\\
& { \le \frac{1}{4}  \| \na^2 \vp/\sqrt{N} \|^2 + C\| \na \vp /\sqrt{N}\|^2}
\end{align*}
by  $ \| \na\ps\|_{L^{\infty}} \le C\sqrt{M(t)}$ and {by the assumption $M(t)\leq M(T)\leq 1$.}\\

 \iffalse
%%%%%%%%%%%%%%%%%%
%\ep estimate begin
%%%%%%%%%%%55
For the $\ep$ terms, %\begin{align}
%& -2\ep \int (\calP \ps_z)_z \psi_z- \ep \int (|\na \ps|^2)_z \ps_z  - 2\ep \int \calP \ps_{zy} \psi_y- \ep \int (|\na \ps|^2)_y \ps_y,
%\end{align}
 we estimate
\begin{align*}
&-2\ep \int (\calP \ps_z)_z \psi_z  - 2\ep \int \calP \ps_{zy} \psi_y=
  -2\ep \Big(\int \calP' \ps_z \psi_z
-2\ep \int \calP \ps_{zz} \psi_z 
  - 2\ep \int \calP \ps_{zy} \psi_y \Big)\\
  &\leq C\ep \|P\|_{L^\infty}\|\na\ps\|\|\na^2\ps\|+C\ep \|P'\|_{L^\infty}\|\na\ps\|^2  \\
    &\leq C\ep  \|\na\ps\|^2+\frac{\ep}{4}\|\na^2\ps\|^2   
\end{align*}
and

\begin{align*}
&- \ep \int (|\na \ps|^2)_z \ps_z  
- \ep \int (|\na \ps|^2)_y \ps_y=
- \ep \int (|\na \ps|^2) \ps_{zz}  
- \ep \int (|\na \ps|^2) \ps_{yy}\\
&\leq  C\ep M(t) \int |\na \ps||\na^2 \ps|
\leq  C\ep \|\na \ps\|^2+\frac{\ep}{4}\|\na^2 \ps\|^2.
\end{align*}
\fi
Adding up all the estimates above, we have 
\begin{align*}
&\frac 12 \ddt \lr { \int \fn{\abs{\na \vp}} + \int \abs{\na \ps} } + \frac 14\int   \fn{\abs{ \na^2\vp } }
%+ \frac{\ep}{4}\int |\na^2 \psi|^2
\le  
C\|{\sqrt{N}} {\na \ps}\|^2  + C   \| \frac{\na \vp}{\sqrt{N}}\|^2.
% +C\ep \|\na \ps\|^2.
\end{align*}
After integration in time, thanks to Lemma \ref{lemma0_}, we can control the last   term  above so that we arrive at \eqref{ineq:lem1_}.
\end{proof}

\begin{lemma}\label{lemma1_1} If   $\la>0$
%, and $\ep\geq0$
 is small enough, then
we have 
 \begin{align} \label{claim1_lemma1_}
& \int_0^t \int N |\na \ps|^2 
%+\ep\int_0^t \int  |\na^2 \ps|^2 
\le C \lr{ \| \na \ps_0 \|^2 + \| \ps _0\|^2 + \| \vp_0\|_w^2} 
+ C\sqrt{M(t)} \int_0^t  \int \fn{ |\na \ps |^2}.
\end{align}
\end{lemma}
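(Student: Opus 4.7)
I would test the $\vp$-equation in \eqref{eq:phipsi} against $\na\ps$ (dot product) and integrate over $\Om$; since the right-hand side contains the factor $N\na\ps$, this test produces the target quantity $\int N|\na\ps|^2$ directly, together with a quadratic leftover $\int\calP\ps_z\,\na\cdot\vp$ and a cubic remainder $\int\na\cdot\vp\,|\na\ps|^2$. To convert the left-hand side into time-integrable quantities I would exploit the $\ps$-equation $\ps_t-s\ps_z=\na\cdot\vp$: after integration by parts in space,
\begin{equation*}
\int\vp_t\cdot\na\ps-s\int\vp_z\cdot\na\ps=\ddt\int\vp\cdot\na\ps+\int(\na\cdot\vp)^2,
\end{equation*}
and substituting the same identity into the diffusion piece,
\begin{equation*}
-\int\Del\vp\cdot\na\ps=-\int\na(\na\cdot\vp)\cdot\na\ps=-\tfrac12\ddt\|\na\ps\|^2,
\end{equation*}
where the remainder $s\int\na\ps_z\cdot\na\ps$ vanishes by periodicity in $y$ and decay in $z$. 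Integrating in $t$ and isolating the target term yields
\begin{align*}
\int_0^t\int N|\na\ps|^2
&=\int\vp(t)\cdot\na\ps(t)-\int\vp_0\cdot\na\ps_0-\tfrac12\|\na\ps(t)\|^2+\tfrac12\|\na\ps_0\|^2\\
&\quad +\int_0^t\int(\na\cdot\vp)^2-\int_0^t\int\calP\ps_z\,\na\cdot\vp-\int_0^t\int\na\cdot\vp\,|\na\ps|^2.
\end{align*}

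I would then estimate each contribution on the right. The negative term $-\tfrac12\|\na\ps(t)\|^2$ is essential: Young's inequality gives $\int\vp(t)\cdot\na\ps(t)\le\tfrac14\|\na\ps(t)\|^2+\|\vp(t)\|^2$, whose first piece is absorbed by the negative term, and whose second satisfies $\|\vp(t)\|^2\le\|\vp(t)\|_w^2\le CM_0+CM(t)\int_0^t\int|\na\ps|^2/N$ by Lemma~\ref{lemma0_}. The same lemma controls $\int_0^t\int(\na\cdot\vp)^2\le\int_0^t\|\na\vp\|_w^2$. For the quadratic interaction I would use the structural relation $\calP=-N/s$ from \eqref{relation}, so that $|\calP\ps_z|\le CN|\na\ps|$; weighted Cauchy--Schwarz then gives $|\int_0^t\int\calP\ps_z\,\na\cdot\vp|\le\ep\int_0^t\int N|\na\ps|^2+C_\ep\int_0^t\|\na\vp\|_w^2$, where the first piece absorbs and the second falls under Lemma~\ref{lemma0_}. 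The cubic term is the simplest: Sobolev embedding gives $\|\na\vp\|_{L^\infty}\lesssim\sqrt{M(t)}$ so $\int_0^t\int\na\cdot\vp\,|\na\ps|^2\le C\sqrt{M(t)}\int_0^t\|\na\ps\|_w^2\sim C\sqrt{M(t)}\int_0^t\int|\na\ps|^2/N$ by the equivalence $w\sim 1/N$ of Lemma~\ref{wave_prop}. Under the bootstrap $M(t)\le 1$ we have $CM(t)\le C\sqrt{M(t)}$, so every $M(t)$-factored term consolidates into a single $C\sqrt{M(t)}\int_0^t\int|\na\ps|^2/N$ contribution, yielding \eqref{claim1_lemma1_}.

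The main obstacle is the time-boundary contribution $\int\vp(t)\cdot\na\ps(t)$: a direct $L^2$ pairing introduces an $\|\na\ps(t)\|^2$ that is \emph{not} a dissipation, and attempting to bound it via Lemma~\ref{lemma1_0} would reintroduce the term $\int_0^t\int N|\na\ps|^2$ (with a constant $C$, not small) on the right-hand side, closing a vicious cycle. The resolution is to rewrite $\na\cdot\vp$ \emph{inside} the diffusion integral by means of the $\ps$-equation, producing the crucial negative contribution $-\tfrac12\|\na\ps(t)\|^2$ in the working identity that exactly absorbs the dangerous $\|\na\ps(t)\|^2$ coming from the boundary term.
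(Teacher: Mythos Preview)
Your proof is correct and follows essentially the same approach as the paper: you test the $\vp$-equation against $\na\ps$, use the $\ps$-equation to rewrite both the $\vp_t\cdot\na\ps - s\vp_z\cdot\na\ps$ combination and the diffusion term $\int\Del\vp\cdot\na\ps=\int\na(\dv\vp)\cdot\na\ps$ as total time derivatives plus $\int(\dv\vp)^2$, and then exploit the crucial negative contribution $-\tfrac12\|\na\ps(t)\|^2$ to absorb the time-boundary term. The treatment of the quadratic term via $\calP=-N/s$ (so that a small multiple of $\int N|\na\ps|^2$ can be absorbed) and of the cubic term via $\|\dv\vp\|_{L^\infty}\le C\sqrt{M(t)}$, together with the final appeal to Lemma~\ref{lemma0_} and the reduction $M(t)\le\sqrt{M(t)}$, all match the paper's argument.
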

\begin{remark}
\begin{enumerate}
\item  Roughly speaking, this estimate moves $N(z)$  to the denominator, while the  integral multiplied by
$\sqrt{M(t)}$ which can be assumed small enough.  
\item  This estimate for $ \int_0^t \int N |\na \ps|^2$ is due to the special structure 
\eqref{eq0:phipsi}. Note that  the $\vp$-equation has the term $N\na\ps$ on its right hand. %For this reason, we do not expect a similar estimate for its lower order part $ \int_0^t \int N | \ps|^2$.
\end{enumerate}
\end{remark}
\begin{proof}

Multiplying $\na\ps$ to the $\vp$-equation, %: $\varphi_t - s \varphi_z - \Del \vp =   N \na \psi + P\na \cdot \varphi + \na\cdot \varphi \na \psi$, 
we have 
\begin{equation}\label{eq_lemma0__first_claim_intermediate}
 N |\na \ps|^2 = \underbrace{\vp_t \cdot \na \ps}_{(A)} - s\vp_z \cdot\na \ps -\underbrace{\Del \vp \cdot\na \ps}_{(B)} 
%- \calP \dv \vp \na \ps 
-(\dv \vp)P\cdot\na\ps
- \dv \vp | \na \ps|^2.
\end{equation}
Let us rewrite the terms $(A)$ and $(B)$.  
\begin{align*}
(A)%=\vp_t \cdot\na \ps 
& = (\vp \cdot\na \ps)_t - \vp \cdot\na \ps_t \\&=
  (\vp \cdot\na \ps)_t - \vp\cdot(s \na \ps_z + \na (\dv \vp)
 % +\ep \Del \na\ps  -2\ep \na( P \cdot\na \ps) -\ep  \na(|\na \ps|^2) )
\end{align*} and 
\[(B) 
% \Del \vp \cdot \na \ps 
 = (\dv \vp)_z \ps_z + ( \vp^1_{yy} - \vp^2_{zy}) \ps_z + ( \dv \vp)_y \ps_y 
+ (\vp^2_{zz} - \vp^1_{zy}) \ps_y.\]
%Recall the $\ps$ equation,
%\[  \ps_t - s \ps_z -\ep \Del \ps =  -2\ep  P \cdot\na \ps -\ep  |\na \ps|^2 + \dv\varphi.\]
Integration by parts gives
\[ \int(B)
%=\int \Del \vp \cdot \na \ps
 =  \int  (\dv \vp)_z \ps_z + ( \dv \vp)_y \ps_y
  =  \int  \na(\dv \vp) \cdot \na\ps.\]
 By using the $\ps$-equation which has $\na\cdot\vp$ on its right side,
we get
\begin{align*} \int \na (\dv \vp)\cdot\na \ps &= \int \na (\ps_{t} - s \ps_{z}
% -\ep \Del \ps +2\ep  P \cdot\na \ps+\ep  |\na \ps|^2
)\cdot \na  \ps  = \frac 12 \ddt \int |\na\ps|^2.
%+\int \na( 
% -\ep \Del \ps +2\ep  P \cdot\na \ps+\ep  |\na \ps|^2
%)\cdot\na \ps.
\end{align*}
%\[ \int  (\dv \vp)_z \ps_z = \int (\ps_{t} - s \ps_{z}
% -\ep \Del \ps +2\ep  P \cdot\na \ps+\ep  |\na \ps|^2
%)_z \ps_z = \frac 12 \ddt \int \ps_z^2
%+\int ( 
% -\ep \Del \ps +2\ep  P \cdot\na \ps+\ep  |\na \ps|^2
%)_z \ps_z
%,\]
%\[ \int  (\dv \vp)_y \ps_y = \int (\ps_{t} - s \ps_{z}
%-\ep \Del \ps +2\ep  P \cdot\na \ps+\ep  |\na \ps|^2
%)_y \ps_y = \frac 12 \ddt \int \ps_y^2
%+\int ( 
% -\ep \Del \ps +2\ep  P \cdot\na \ps+\ep  |\na \ps|^2
%)_y \ps_y.\]
Integration on \eqref{eq_lemma0__first_claim_intermediate} gives us
\begin{align*}%\label{ineq_claim1_lemma1}
\int N |\na \ps|^2 =& 
\ddt \int \vp \cdot\na \ps - \int s \vp \cdot \na \ps_z -\int\vp\cdot (\na (\dv \vp)
%- \int\vp\cdot
%+ \ep \Del \na\ps  -2\ep \na( P \cdot\na \ps) -\ep  \na(|\na \ps|^2) 
)\\ \nonumber
&- \int s \vp_z\cdot \na \ps \\ \nonumber
&- \frac 12 \ddt\int( \ps_z^2 + \ps_y^2)\\ \nonumber
%+\int \na( 
% \ep \Del \ps -2\ep  P \cdot\na \ps-\ep  |\na \ps|^2
%)\cdot\na\ps
&- \int (\dv \vp)P\cdot\na\ps   - \int  \dv \vp | \na \ps|^2. 
  \end{align*}
 We rearrange the above to get
 \begin{align}\label{ineq_claim1_lemma1}
\int N |\na \ps|^2  
=& \ddt \int \vp \cdot\na \ps - \frac 12 \ddt \int  |\na\ps|^2 \\& \nonumber + \int | \dv \vp|^2 - \int (\dv \vp)P\cdot\na\ps - \int  \dv \vp | \na \ps|^2\\ \nonumber
%&+\ep\int \na( 
%  \Del \ps -2  P \cdot\na \ps-   |\na \ps|^2
%)\cdot\na\ps - \ep\int\vp\cdot(   \Del \na\ps  -2 \na( P \cdot\na \ps) -  \na(|\na \ps|%^2) )\\& \nonumber
=(I)+(II).%+(III).
 \end{align}
 Note that 
 \begin{align*}
 {\int_0^t(I)}=\int_0^t\lr{\ddt \int \vp \cdot\na \ps - \frac 12 \ddt \int  |\na\ps|^2}\leq {C(\|\vp(t)\|^2 +  \|\na\ps_0\|^2+\|\vp_0\|^2)}&
 \end{align*} by 
$ \int  \vp \na\ps \le C\| \vp \| ^2 + \frac 12   \| \na \ps\|^2$.

For the second term $(II)$, we estimate 
 \begin{align*}
  \int | \dv \vp|^2-\int (\dv \vp)P\cdot\na\ps 
 %-2s \int \vp_z \na \ps  
 & \le  C\int \fn{ | \na \vp|^2} + \frac 14 \int  N |\na \ps|^2, \\
  \int  |\dv \vp| | \na \ps|^2 & \le C\sqrt{M(t)}    \int {| \na \ps|^2} \le C\sqrt{M(t)}    \int \fn{| \na \ps|^2}
 \end{align*}
 by bounding $\| \dv\vp \|_{L^{\infty}}  \le C \| \dv\vp \|_{H^2}\le C \sqrt{M(t)}$. 
% $\| \na \ps \|_{L^{\infty}} \le C M(t)$.

\iffalse

%%%%%%%%%%%%%%%%%%
%\ep estimate begin
%%%%%%%%%%%55
For the $\ep$-term $(III)$,   we estimate \begin{align*}
 \ep \int \na(    \Del \ps -2   P \cdot\na \ps-   |\na \ps|^2)\cdot\na\ps &=
-\ep\int |\na^2\ps|^2
-\ep \int \na(     2   P \cdot\na \ps+  |\na \ps|^2)\cdot\na\ps\\
&\leq-\ep\|\na^2\ps\|^2 +C\ep  \|\na\ps\|^2+\frac{\ep}{4}\|\na^2\ps\|^2\\
%&\leq-\frac{3\ep}{4}\|\na^2\ps\|^2 +C\ep  \|\na\ps\|^2 
\end{align*} and 
\begin{align*}
 - \ep\int\vp\cdot(   \Del \na\ps  -2 \na( P \cdot\na \ps) -  \na(|\na \ps|^2) ) 
&\leq  C\ep\int|\na\vp||\na^2\ps|  + |\na\vp||\na \ps| +|\na\vp||\na \ps|^2\\
&\leq \frac\ep 4\|\na^2\ps\|^2 +C\ep  \|\na\vp\|^2+C\ep  \|\na\ps\|^2+C\ep M(t)  \|\na\ps\|^2\\
&\leq \frac\ep 4\|\na^2\ps\|^2 +C  \|\na\vp\|_w^2+C\ep  \|\na\ps\|^2.
\end{align*}
\fi

Now integrating \eqref{ineq_claim1_lemma1} in time, we get
 \begin{align*} 
\int_0^t\int& N |\na \ps|^2  
%=& \int_0^t (I)+\int_0^t(II)+\int_0^t(III)\\
\leq 
C(\|\vp(t)\|^2 + \|\na\ps_0\|^2+\|\vp_0\|^2)\\
& +\int_0^t\Big(C  \|\na\vp\|_w^2+ \frac 14 \int  N |\na \ps|^2  +C\sqrt{M(t)}    \int \fn{| \na \ps|^2} %- \frac\ep 2\|\na^2\ps\|^2 +C\ep  \|\na\ps\|^2
\Big).
 \end{align*}

%%%%%%%%%%%%%%%%%%
%\ep estimate end
%%%%%%%%%%%55

By Lemma \ref{lemma0_} and {by $M(t) \leq \sqrt{M(t)}$,} we have 
% \begin{align*} 
% \int_0^t \int N |\na \ps|^2 
% +\ep\int_0^t \int |\na^2 \ps|^2 
% \le& C \lr{ \| \na \ps_0 \|^2 + \| \ps _0\|^2 + \| \vp_0\|_w^2} 
%+ C\sqrt{M(t)} \int_0^t  \int \fn{ |\na \ps |^2}.
% \end{align*}
%This proves the claim 
\eqref{claim1_lemma1_}.  
\end{proof}
What remains to close \eqref{ineq:lem1_} is to control the term
 $\sqrt{M(t)}\int _0^ t \int \fn {|\na \ps|^2 }$. Note that $\psi$-equation lacks  diffusion of closing the estimates without weight. However, our choice of the one-sided exponential weight function $w(\cdot)$ gives a one-sided dissipation $\int_0^t\int_{z>0}\frac{|\nabla \psi|^2}{N}$ as shown below.
\begin{lemma}\label{lemma1_2} If $M(T)>0$ and $\la>0$
%, and $\ep\geq0$ 
are small enough, then
we have 
 \begin{align} 
& \int \fn {|\na \ps|^2} +   \int _0^ t \int \fn {|\na \ps|^2 }
%+ \ep \int_0^t\int    \fn{|\na^2\ps|^2}
\le  C(  \| \na \ps_0 \|_w^2 + \| \ps _0\|^2 + \| \vp_0\|_{w}^2) + C\int_0^t \int \fn{ |\na^2\vp|^2}. \label{claim2_lemma1_}
\end{align}
\end{lemma}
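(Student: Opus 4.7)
The plan is to set up a weighted energy identity for $\na\ps$ in the weight $\fn{1}$. I will differentiate the $\ps$-equation $\ps_t - s\ps_z = \dv\vp$ in $z$ and in $y$, test against $\fn{\ps_z}$ and $\fn{\ps_y}$ respectively, sum, and integrate by parts (the $y$-part uses $\la$-periodicity). The transport terms $-s\int\fn{\ps_z\ps_{zz}}-s\int\fn{\ps_y\ps_{zy}}$ produce the one-sided dissipation $\tfrac{s}{2}\int\Fn'|\na\ps|^2$, and estimating $\int\fn{\na\ps\cdot\na(\dv\vp)}$ by Cauchy--Schwarz yields, for any $\eta>0$,
$$\tfrac{1}{2}\ddt\int\fn{|\na\ps|^2} + \tfrac{s}{2}\int\Fn'\,|\na\ps|^2 \;\le\; \eta\int\fn{|\na\ps|^2} + C_\eta\int\fn{|\na^2\vp|^2}.$$

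The crucial algebraic input upgrading this partial dissipation to the full $\int\fn{|\na\ps|^2}$ is the identity
$$s\,\Fn' \;=\; s^2\fn{1} - 1,$$
which follows from the ODE $\Fn''=s\Fn'$ in \eqref{relation} together with the boundary value $\fn{1}(-\infty)=1/s^2$ (equivalently, the explicit formula \eqref{formula:NP} yields $\fn{1} = s^{-2} + (c_+N_0)^{-1}e^{sz}$ directly). Substituting and choosing $\eta$ sufficiently small, the identity becomes
$$\ddt\int\fn{|\na\ps|^2} + c\,s^2\int\fn{|\na\ps|^2} \;\le\; \int |\na\ps|^2 + C\int\fn{|\na^2\vp|^2}$$
for some $c>0$.

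The main obstacle is the unweighted residual $\int|\na\ps|^2$: since $\fn{1}\ge 1/s^2$ only, the crude bound $\int|\na\ps|^2 \le s^2\int\fn{|\na\ps|^2}$ would annihilate the dissipation. I will handle it by a spatial partition at a threshold $z_0$ to be chosen. On $\{z\ge z_0\}$, monotonicity of $\fn{1}$ gives $|\na\ps|^2 \le \fn{|\na\ps|^2}/\fn{1}(z_0)$, with $1/\fn{1}(z_0)$ as small as we like since $\fn{1}\nearrow\infty$. On $\{z<z_0\}$, monotonicity of $N$ gives $|\na\ps|^2 \le N\,|\na\ps|^2/N(z_0)$, after which Lemma \ref{lemma1_1} controls $\int_0^t\!\int N|\na\ps|^2$ by the initial data plus $C\sqrt{M(t)}\int_0^t\!\int\fn{|\na\ps|^2}$.

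Integrating the main inequality in time and inserting the decomposition, the coefficient of $\int_0^t\!\int\fn{|\na\ps|^2}$ produced by the residual takes the form $C_{z_0}\sqrt{M(t)} + 1/\fn{1}(z_0)$. I will first fix $z_0$ large so that $1/\fn{1}(z_0)$ is below $cs^2/4$, and then shrink the bootstrap size $\delta_0$ so that $C_{z_0}\sqrt{M(T)}$ is also below $cs^2/4$; both contributions then absorb into the $cs^2$ dissipation on the left, yielding \eqref{claim2_lemma1_}. The crux is this interplay: Lemma \ref{lemma1_1} provides dissipation on the far-left region $\{N\gtrsim 1\}$, the $\Fn'$ weight provides dissipation on the far-right region $\{e^{sz}\gtrsim 1\}$, and the spatial partition stitches the two halves together.
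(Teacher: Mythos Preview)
Your argument is correct. The key identity $s\,(1/N)'=s^2/N-1$ does follow from \eqref{relation} and the boundary value $N(-\infty)=s^2$, and your spatial partition at a large $z_0$ together with Lemma~\ref{lemma1_1} is a valid way to absorb the unweighted residual $\int|\na\ps|^2$ once $z_0$ is fixed and then $\delta_0$ is taken small.

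The paper's proof follows the same overall strategy but implements it differently. There the multiplier is $w\na\ps$ rather than $\fn{\na\ps}$, and the domain is split at $z=0$ \emph{before} integrating by parts: on $\{z>0\}$ the inequality $w'\ge c\,w$ furnishes the weighted dissipation directly, while on $\{z<0\}$ one simply uses $w\le 2\le C N$ to replace the (undissipated) $w$-weighted term by $\int N|\na\ps|^2$, which is then fed into Lemma~\ref{lemma1_1}. The boundary contributions at $z=0$ from the two half-strips carry opposite signs and cancel upon addition. Your route trades this domain split in the energy identity for a global identity with an explicit residual, postponing the split to the estimate of that residual; the price is that your threshold $z_0$ must be chosen large (rather than $z_0=0$), which in turn forces the constant $C_{z_0}=C/N(z_0)$ in front of $\sqrt{M(t)}$ to be large and hence makes the smallness requirement on $\delta_0$ depend on $z_0$. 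The paper's version avoids this coupling since the split at $z=0$ gives a fixed constant in front of $\sqrt{M(t)}$. Both approaches are equivalent in strength here; the paper's is slightly more robust (it only needs $w'\ge cw$ on one side, not the exact algebraic identity), while yours is algebraically cleaner and avoids carrying the boundary terms at the split.
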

 
\begin{proof}
First we take $\na$ to the $\ps$-equation then multiply by  $ w \na \ps$ to get
%Multiplying $ w \na \ps$ to the equation $$ \na \ps_t - s \na\ps_z -\ep \Del \na\ps = \na (\dv \vp) -2\ep \na( P \cdot\na \ps) -\ep  \na(|\na \ps|^2), $$ we have
\begin{align*} 
&\frac 12 (w |\na \ps|^2)_t - \frac s2 ( w |\na \ps|^2)_z + \frac s2 w' |\na \ps|^2  
 = w \na (\dv \vp) \cdot \na \ps. 
%+\underbrace{\ep w \na \ps\cdot\lr{\Del \na\ps -2  \na( P \cdot\na \ps) -   \na(|\na \ps|^2)}}_{\ep\mbox{-terms}}.
\end{align*}
 
Note that for some $c>0$
\begin{align}
w' = se^{sz} > c w  & \quad \mbox{for } z > 0,\label{positive_comp} \\
w = 1+e^{sz} \le 2
< \frac{1}{c}N 
 & \quad \mbox{for } z\le0.\label{negative_comp}
\end{align}
Integrating on each half strip (notation : $\int_{z>0}f:=\int_0^\infty\int_{[0,\la]} f (z,y,t)dy dz$)  and in time, we get 
\begin{align*}
\int _{z>0} w |\na \ps|^2 
 \le  & \int_{z>0} w |\na \ps_0|^2 + \int_0^t \int_{z>0} w \na (\dv \vp) \cdot \na \ps -
c\int_0^t \int _{z>0} w |\na \ps|^2 \\& -\frac s2\int_0^t  \int_{[0,\la]} w |\na \ps|^2(0, y) dy \quad (\mbox{by } \eqref{positive_comp}) \\%+\int_0^t \int_{z>0}{\ep\mbox{-terms}}\\
\le  &  \int _{z>0} w |\na \ps_0|^2 -\frac c2 \int_0^t \int_{z>0} w |\na \ps|^2 +
  C \int_0^t \int_{z>0} w |\na(\na \cdot \vp)|^2 \\& -\frac s2\int_0^t  \int_{[0,\la]} w |\na \ps|^2(0, y) dy
  %+\int_0^t \int_{z>0}{\ep\mbox{-terms}}
  \end{align*} and 
  \begin{align*}
 \int _{z<0} w |\na \ps|^2 \le & \int_{z<0} w |\na \ps_0|^2 + \int_0^t \int_{z<0} w \na (\dv \vp) \cdot \na \ps
\\& +\frac s2\int_0^t  \int_{[0,\la]} w |\na \ps|^2(0, y) dy\\% +\int_0^t \int_{z<0}{\ep\mbox{-terms}}\\
\le & \int_{z<0} w |\na \ps_0|^2 +  2\int_0^t \int_{z<0} |\na (\dv \vp)|| \na \ps |
 \\&  +\frac s2\int_0^t  \int_{[0,\la]} w |\na \ps|^2(0, y) dy \\% +\int_0^t \int_{z<0}{\ep\mbox{-terms}}\\
  \le &  \int_{z<0} w |\na \ps_0|^2 + C\int_0^t \int_{z<0} \fn{ |\na^2 \vp|^2}+ \int_0^t \int_{z<0} N |\na \ps|^2  
 \\&  +\frac s2\int_0^t  \int_{[0,\la]} w |\na \ps|^2(0, y) dy.  %+\int_0^t \int_{z<0}{\ep\mbox{-terms}} .
\end{align*}
Adding the above two then adding $\frac c2 \int_0^t \int_{z<0} w |\na \ps|^2 $ to the both sides, we get
\begin{align*}
\int w |\na \ps|^2 + \frac c2 \int_0^t \int w |\na \ps|^2 \le & \int w |\na \ps_0|^2 +  \int_0^t \int N|\na\ps|^2 \\
& + 
C\int_0^t \int w |\na^2 \vp|^2+\frac c2 \int_0^t \int_{z<0} w |\na \ps|^2 \\%  +\int_0^t \int {\ep\mbox{-terms}}\\
\le & \int w |\na \ps_0|^2 +  C\int_0^t \int N|\na\ps|^2  + 
C\int_0^t \int w |\na^2 \vp|^2 \quad (\mbox{by } \eqref{negative_comp}) \\
\le & C ( \| \na \ps_0\|_w^2 + \| \ps_0\|^2 + \| \vp_0\|_w^2)\\ & + C\sqrt{M(t)} \int_0^t \int \fn{ |\na \ps|^2}
+ C \int_0^t \int \fn{ |\na^2 \vp|^2} %+\int_0^t \int {\ep\mbox{-terms}}, 
\end{align*}
where we used the previous estimate \eqref{claim1_lemma1_} for the last inequality.   
\iffalse
%%%%%
%\ep begins
%%
For the $\ep ${-terms},  we estimate

\begin{align*}
&\int {\ep \mbox{-terms}}=\ep\int { w \na \ps\cdot\lr{\Del \na\ps -2  \na( P \cdot\na \ps) -   \na(|\na \ps|^2)}}\\
%&=\ep\int {w \na \ps\cdot \Del \na\ps + w \na \ps\cdot\lr{  -2  \na( P \cdot\na \ps) -   \na(|\na \ps|^2)}}\\
&=\ep\int \Big({ -w|\na^2\ps|^2 -w' \na \ps\cdot  \na\ps_z
- w \na \ps\cdot\lr{  2  \na( P \cdot\na \ps)} 
+  %w \na \ps\cdot
(w'\psi_z+w\Del\ps)
|\na \ps|^2}\Big)\\
&\leq-\ep\int {  w|\na^2\ps|^2 +C\ep\int \Big( w |\na \ps|| \na^2\ps| + w| \na \ps|( |\na^2\ps|+|\na\ps| )
+w |\na \ps|    |\na \ps|^2}+w|\na^2\ps||\na\ps|^2\Big)\\
&\leq-\frac \ep 4\int   w|\na^2\ps|^2 +C\ep\int w |\na \ps|^2 
 +C\ep\sqrt{M(t)}\int w |\na \ps|^2
\leq-\frac \ep 4\int   w|\na^2\ps|^2 +C\ep\int w |\na \ps|^2 
%  \\& \leq-\frac \ep 4\int   w|\na^2\ps|^2 +\frac{1}{2}\int w |\na \ps|^2 
\end{align*} where we used the estimate $|w'|\leq C|w|$ and for the last inequality, we assumed %$\ep$ and
 $M(t)$ small enough.

%%%%%
%\ep ends
%%%%
\fi
%Plugging this estimate,
Thus we have 
\begin{align*}
&\int w |\na \ps|^2 +  (\frac{c}{2}-C
%(\ep+
\sqrt{M(t)})
%) 
\int_0^t \int w |\na \ps|^2\\% +\frac{ \ep}{4} \int_0^t\int   w|\na^2\ps|^2 \\
&\le C ( \| \na \ps_0\|_w^2 + \| \ps_0\|^2 + \| \vp_0\|_w^2) %+ C\sqrt{M(t)} \int_0^t \int \fn{ |\na \ps|^2}
+ C \int_0^t \int \fn{ |\na^2 \vp|^2}.  
\end{align*} Then, by making
% $\ep$ and
  $M(t)$ small enough, it proves the estimate \eqref{claim2_lemma1_}.
%\begin{remark}
%This is the first place we need the smallness of $\ep$ essentially. %Indeed, we want to control $\ep\int { w \na \ps\cdot\lr{\Del \na\ps -2  \na( P \cdot\na \ps) }}$ but $C\ep\int w |\na \ps|^2$
%\end{remark}

\end{proof}
We are ready to obtain a closed energy estimate up to the first order derivatives:
\begin{lemma}\label{lemma01_} If $M(T)>0$ and $\la>0$
%, and $\ep\geq0$ 
are small enough, then
we have 
  \begin{equation}\begin{split}\label{eq_lemma01_}
\| \vp\|_{1,w}^2 + \| \ps\|^2    + \| \na \ps\|_w^2+  \int_0^t  \sum_{l = 1,2} \| \na^{l} \vp\|_w^2
+\int_0^t   \| \na \ps\|_w^2
%+\ep\int_0^t   \| \na^{2} \ps\|_w^2
%\\
%\quad 
 \le
 C  ( \|  \vp_0\|_{1,w}^2  +
 \| \na \ps_0\|_w^2+ \| \ps_0\|^2). 
\end{split}\end{equation}
\end{lemma}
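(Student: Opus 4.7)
The plan is to close the chain of estimates from Lemmas \ref{lemma0_}, \ref{lemma1_0}, \ref{lemma1_1}, and \ref{lemma1_2} by careful substitution and smallness-based absorption, exploiting the equivalence $w \sim 1/N$ from Lemma \ref{wave_prop}. Throughout, I will write $\mathrm{IC} := \|\vp_0\|_{1,w}^2 + \|\na\ps_0\|_w^2 + \|\ps_0\|^2$ for the initial data contribution.

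First I would plug the bound on $\int_0^t \int N|\na\ps|^2$ from Lemma \ref{lemma1_1} into the right-hand side of Lemma \ref{lemma1_0}, absorbing the $\int_0^t \int |\na\ps|^2/N$ term (which is comparable to $\int_0^t \|\na\ps\|_w^2$) with a coefficient $C\sqrt{M(t)}$. This gives
\begin{align*}
\|\na\vp\|_w^2 + \|\na\ps\|^2 + \int_0^t \|\na^2\vp\|_w^2 \le C\,\mathrm{IC} + C\sqrt{M(t)} \int_0^t \|\na\ps\|_w^2.
\end{align*}
Next I would substitute the bound on $\int_0^t \|\na\ps\|_w^2$ from Lemma \ref{lemma1_2}, which controls this quantity by $C\,\mathrm{IC} + C\int_0^t \|\na^2\vp\|_w^2$, to obtain
\begin{align*}
\|\na\vp\|_w^2 + \|\na\ps\|^2 + \int_0^t \|\na^2\vp\|_w^2 \le C\,\mathrm{IC} + C\sqrt{M(t)} \int_0^t \|\na^2\vp\|_w^2.
\end{align*}
Taking $M(T)$ sufficiently small then allows the dissipation term on the right to be absorbed into the left.

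Having established $\|\na\vp\|_w^2 + \|\na\ps\|^2 + \int_0^t \|\na^2\vp\|_w^2 \le C\,\mathrm{IC}$, I feed this back into Lemma \ref{lemma1_2} to obtain $\|\na\ps\|_w^2 + \int_0^t \|\na\ps\|_w^2 \le C\,\mathrm{IC}$, and finally into Lemma \ref{lemma0_} to conclude $\|\vp\|_w^2 + \|\ps\|^2 + \int_0^t \|\na\vp\|_w^2 \le C\,\mathrm{IC}$, where the remaining $CM(t)\int_0^t \|\na\ps\|_w^2$ term is harmless under the smallness assumption on $M(T)$. Adding all the pieces yields \eqref{eq_lemma01_}.

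The main obstacle is the circular coupling: the weighted $\psi$-dissipation obtained via the one-sided exponential weight (Lemma \ref{lemma1_2}) is driven by $\|\na^2\vp\|_w^2$, while the $\vp$-dissipation estimate (Lemma \ref{lemma1_0}) requires control of $\int N|\na\ps|^2$, and Lemma \ref{lemma1_1} bounds the latter only up to $\sqrt{M(t)}\int_0^t \|\na\ps\|_w^2$. Breaking this loop is possible precisely because the troublesome coefficients are proportional to $\sqrt{M(t)}$ rather than a fixed constant, so that after two substitutions the bad $\int_0^t \|\na^2\vp\|_w^2$ term reappears with a small prefactor and can be absorbed by making $M(T)$ small.
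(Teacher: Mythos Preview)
Your proposal is correct and follows essentially the same approach as the paper: substitute Lemma~\ref{lemma1_1} and then Lemma~\ref{lemma1_2} into Lemma~\ref{lemma1_0}, absorb the resulting $C\sqrt{M(t)}\int_0^t \|\na^2\vp\|_w^2$ by smallness of $M(T)$, then feed the closed estimate back into Lemma~\ref{lemma1_2} and Lemma~\ref{lemma0_} and sum. One small remark: in your last step the term $CM(t)\int_0^t \|\na\ps\|_w^2$ is not so much ``absorbed by smallness'' as simply bounded by $C\,\mathrm{IC}$, since you have already shown $\int_0^t \|\na\ps\|_w^2 \le C\,\mathrm{IC}$ and $M(t)\le 1$.
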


\begin{proof}
Plugging the estimates \eqref{claim1_lemma1_} and \eqref{claim2_lemma1_} into \eqref{ineq:lem1_}, 
we  have
\begin{align*}
 &\| \na\vp\|_w^2 + \| \na\ps\|^2 + \int_0^t \| \na^2 \vp\|_w^2 \\ %+ \ep \int_0^t\|\na^2\ps\|^2 \\
 &\le  C ( \| \na \ps_0\|_w^2 + \| \ps_0\|^2 + \| \vp_0\|_{1,w}^2) + C\sqrt{{M(t)}}   \int_0^t \int \fn{|\na^2 \vp|^2}
\end{align*} which gives us 
\begin{align}\label{ineq:lem1_closed}
 &\| \na\vp\|_w^2 + \| \na\ps\|^2 + \int_0^t \| \na^2 \vp\|_w^2 % + \ep \int_0^t\|\na^2\ps\|^2
\le  C ( \| \na \ps_0\|_w^2 + \| \ps_0\|^2 + \| \vp_0\|_{1,w}^2) 
\end{align}
if we assume $M(t)$ small enough.
%Thus the first estimate of Lemma \ref{lemma1_} holds for a sufficiently small  $M(t)$.  
%In turns, , we have
In addition, from the estimate \eqref{claim2_lemma1_} with the above estimate \eqref{ineq:lem1_closed}, we get
\begin{align}\label{claims_lemma1_}
\int \fn{ |\na \ps|^2} + \int_0^t \int \fn{ |\na \ps|^2} %+  \ep\int_0^t \int \fn{ |\na^2 \ps|^2} 
\le C ( \|  \vp_0\|_{1,w}^2   + \| \na \ps_0\|_w^2
+ \| \ps_0\|^2).
\end{align}
  Adding Lemma \ref{lemma0_} and the above \eqref{ineq:lem1_closed} and \eqref{claims_lemma1_} and applying the estimate \eqref{claim2_lemma1_} into the sum, we have \eqref{eq_lemma01_}.
   Note that we have used $M(t)$ for $ \| \vp\|^2_{L^{\infty}} \le M(t)$, $ \| \dv\vp\|_{L^{\infty}}^2 \le M(t)$, and  $ \|\na \ps\|^2_{L^{\infty}} <M(t)$.

\end{proof}
 From now on, we will repeat the 
preceding energy estimates up to the third  order derivatives to have the weighted energy estimate \eqref{improve}. {Since the key ideas were already presented in detail on the previous lemmas, we do not split the higher order estimates into several lemmas as in the first order case (Lemma \ref{lemma0_}, \ref{lemma1_0}, \ref{lemma1_1}, \ref{lemma1_2} and \ref{lemma01_}) but state them   in the following single lemma, and we do not write its proof in detail.}

\begin{lemma}\label{lemma23_}If $M(T)>0$ and $\la>0$
%, and $\ep\ge 0$ 
are small enough, then 
for $k=2, 3$ we get
\begin{align*}
&\| \na^k \vp\|^2_w + \| \na^k \ps\|^2_w +   \int_0^t  { \|\na^{k+1}\vp\|_w^2}
+ \int_0^t  { \|\na^{k}\vp\|_w^2}
%+  \ep\int_0^t  { \|\na^{k+1}\ps\|_w^2}
 \le  C (  \| \vp_0\|_{k,w}^2+
 \| \na \ps_0\|_{k-1,w}^2+ \| \ps_0\|^2).
\end{align*}
\end{lemma}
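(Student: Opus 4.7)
The plan is to argue by induction on $k$ for $k\in\{2,3\}$, at each level reproducing the four-step scheme from the $k=1$ case already carried out in Lemmas \ref{lemma0_}, \ref{lemma1_0}, \ref{lemma1_1}, and \ref{lemma1_2}. The base case $k=1$ is Lemma \ref{lemma01_}; assume the conclusion holds through order $k-1$.

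Fix a multi-index $\alpha$ with $|\alpha|=k$. Applying $\pa^\alpha$ to the system \eqref{eq:phipsi} and testing the $\vp$-equation against $\pa^\alpha\vp/N$ and the $\ps$-equation against $\pa^\alpha\ps$ produces the analog of Lemma \ref{lemma1_0}. Three ingredients handle the remainders: the linear piece of the form $\int \fn{\calP}\,\pa^\alpha\vp^1\,\dv\pa^\alpha\vp$ is absorbed as in \eqref{cancelation} by the $y$-periodic Poincar\'e inequality \eqref{ineq:poincare2_periodic} together with smallness of $\la$; the commutator terms arising from $[\pa^\alpha,N]\na\ps$, $[\pa^\alpha,P]\dv\vp$, and derivatives of $1/N$ are bounded pointwise using Lemma \ref{wave_prop} and absorbed by the $(k-1)$-order inductive hypothesis; the trilinear terms $\pa^\alpha\na\cdot(\vp\cdot\na\ps)$ are Leibniz-split, with the lowest-derivative factor placed in $L^\infty$ via $H^2\hookrightarrow L^\infty$ on $\Om$, producing an extra factor $\sqrt{M(t)}$ that will be absorbed. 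The outcome is a bound for $\|\na^k\vp\|_w^2+\|\na^k\ps\|^2+\int_0^t\|\na^{k+1}\vp\|_w^2$ in terms of data plus the two nonclosed terms $\int_0^t\!\int N|\na^k\ps|^2$ and $\sqrt{M(t)}\int_0^t\!\int \fn{|\na^k\ps|^2}$.

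The first of these is controlled as in Lemma \ref{lemma1_1}: apply $\pa^\beta$ with $|\beta|=k-1$ to the $\vp$-equation and pair with $\pa^\beta\na\ps$. The structural term $N\na\ps$ on the right isolates the desired $N|\na^k\ps|^2$, while the cross term $\na(\dv\pa^\beta\vp)\cdot\pa^\beta\na\ps$ recombines via $\ps_t-s\ps_z=\dv\vp$ into a pure time derivative of $|\na^k\ps|^2$, whose boundary-in-time contributions are dominated by the previous step. The second is handled as in Lemma \ref{lemma1_2}: applying $\pa^\beta\na$ with $|\beta|=k-1$ to the $\ps$-equation and testing with $w\,\na^k\ps$, the transport term together with $w'\ge cw$ on $\{z>0\}$ yields the dissipation $c\int_0^t\!\int w|\na^k\ps|^2$ on the positive half-strip, while on $\{z<0\}$ the pointwise bound $w\le CN^{-1}$ dominates the local contribution by the $\int N|\na^k\ps|^2$ quantity just controlled; the forcing $\na(\dv\vp)$ on the right is absorbed into the dissipation $\int\|\na^{k+1}\vp\|_w^2$ from the first step.

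Adding the three estimates and choosing $M(T)\le\delta_0$ and $\la\le\la_0$ small enough absorbs every $\sqrt{M(t)}$- and $\la$-factor into the left-hand dissipations, closing the inductive step at order $k$ and yielding the announced inequality. The main obstacle is the bookkeeping of the trilinear terms $\pa^\alpha\na\cdot(\vp\cdot\na\ps)$ at $k=3$, where distributions placing three derivatives on $\na\ps$ prevent a direct $L^\infty$ bound on $\na^3\ps$; the remedy is to put $\vp$ or $\na\vp$ in $L^\infty$ via $H^2\hookrightarrow L^\infty$ on $\Om$ and estimate $\na^3\ps$ in the weighted $L^2$ already controlled at order $k-1=2$ by the inductive hypothesis, so that every such trilinear piece still carries at least one factor $\sqrt{M(t)}$ and is absorbed.
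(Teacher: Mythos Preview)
Your overall architecture is right and mirrors the paper's: run the four-step scheme (analogs of Lemmas \ref{lemma1_0}, \ref{lemma1_1}, \ref{lemma1_2}, then close) at each order $k=2,3$, feeding on the lower-order estimate. The linear commutator terms, the weight-function manipulations, and the one-sided dissipation argument are all described correctly.

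There is, however, a genuine gap in your treatment of the cubic terms at $k=3$. You identify the ``main obstacle'' as the distribution placing three derivatives on $\na\ps$, i.e.\ the piece $\na\vp\cdot\na^3\ps\cdot\na^4\vp/N$, and you propose to put $\na\vp$ into $L^\infty$ via $H^2\hookrightarrow L^\infty$. That term is indeed handled this way, but it is not the hard one. After one integration by parts the cubic contribution reads schematically $\int \na^2(\na\vp\,\na\ps)\cdot\na^4\vp/N$, and the Leibniz expansion also produces the middle term
\[
\int \frac{\na^2\vp\,\na^2\ps}{N}\,\na^4\vp.
\]
Here neither $\na^2\vp$ nor $\na^2\ps$ lies in $L^\infty$: since $\vp,\ps\in H^3$, their second derivatives are only in $H^1$, and $H^1$ does not embed into $L^\infty$ in two dimensions. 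Your proposed remedy (``put $\vp$ or $\na\vp$ in $L^\infty$'') does not touch this term. The paper closes it by a different Sobolev embedding, namely $H^1\hookrightarrow L^4$ on $\Om$: one writes
\[
\Big|\int \frac{\na^2\vp\,\na^2\ps}{N}\,\na^4\vp\Big|
\le \|\na^2\ps\|_{L^4}\,\Big\|\frac{\na^2\vp}{\sqrt N}\Big\|_{L^4}\,\Big\|\frac{\na^4\vp}{\sqrt N}\Big\|_{L^2},
\]
and then uses $\|f\|_{L^4}\le C(\|f\|_{L^2}+\|\na f\|_{L^2})$ together with $|(1/\sqrt N)'|\le C/\sqrt N$ (Lemma~\ref{wave_prop}) to bound $\|\na^2\vp/\sqrt N\|_{L^4}^2\le C\,M(t)$. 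The same $L^4$ device is needed again in the $k=3$ analog of Lemma~\ref{lemma1_1}, where the remainder contains $\na^2\vp\,\na^2\ps$ tested against $\sqrt N\,\na\Delta\ps$. Without this step the $k=3$ estimate does not close.

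A smaller point: you say $\na^3\ps$ in weighted $L^2$ is ``already controlled at order $k-1=2$ by the inductive hypothesis.'' It is not; the $k=2$ conclusion gives $\int_0^t\|\na^2\ps\|_w^2$, not $\int_0^t\|\na^3\ps\|_w^2$. The $\na^3\ps$ contribution sits in your own nonclosed term $\sqrt{M(t)}\int_0^t\int|\na^3\ps|^2/N$ and is absorbed only at the end of the $k=3$ argument, after the analog of Lemma~\ref{lemma1_2} has been established at that level.
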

\begin{remark}
{
\begin{enumerate}
\item The proof for $k=2$ is similar with that of the first order estimate except \eqref{la_0} and \eqref{la_1}. 
\item For $k=3$, we use 
$$\|(\na^2 \vp)/\sqrt{N}\|_{L^4}\leq \|\vp\|_{3,w}\leq \sqrt{M(t)}$$ (see \eqref{k3e1} and \eqref{k3e2})
as well as $\|\na \vp\|_{L^\infty}\leq \sqrt{M(t)}$.
\end{enumerate}
}
\end{remark}
 
%It remains to prove the above Lemma \ref{lemma23_}. 
 % The length of the following proof is quite long since we stated as a single lemma but the main ideas are identical with those of Lemma \ref{lemma0_}, \ref{lemma1_0}, \ref{lemma1_1}, \ref{lemma1_2} and \ref{lemma01_}.
\begin{proof}[Proof of Lemma \ref{lemma23_}]
Differentiating the $\vp,\ps$ equations $i+j$ times  in $y$ or $z$, we have 
\begin{align*}
&\ipa \jpa \vp_t - s \ipa \jpa \vp_z - \Del \ipa \jpa \vp \\
&\phantom{\ipa \jpa \ps_t - s \ipa\jpa \ps_z} =
  [ \ipa\jpa, N ] \na \ps + N \na  \ipa\jpa \ps  + \ipa\jpa( P \dv \vp) + \ipa \jpa( \dv \vp \na \ps),\\
&\ipa \jpa \ps_t - s \ipa\jpa \ps_z
% -\ep \Del \ipa\jpa\ps 
  = %-2\ep  \ipa\jpa(P \cdot\na \ps) -\ep  \ipa\jpa(|\na \ps|^2)+ 
   \dv \ipa\jpa \vp.
\end{align*}
Thus we get
\begin{align} \label{higher_}
& \frac 12 \ddt \int  \lr{ \fn{ | \ipa \jpa \vp|^2} + | \ipa \jpa \ps|^2 } 
+  \int \fn { | \na \ipa\jpa \vp |^2}
%+  \ep\int  { | \na \ipa\jpa \ps|^2}
\\
 &=
  %\underbrace
  {  \frac 12 \int \abs{\ipa \jpa \vp} \left( \fn{1} \right)''- \frac s2 \int \abs{\ipa \jpa \vp} \Fn'}
  %_{\leq  \frac{1}{2s}\int|\ipa \jpa\vp_{z}|^2(\frac{1}{N})'\leq \frac 12 \int\frac{|\ipa \jpa\vp_{z}|^2}{N}} 
 \nonumber\\
 &\underbrace{+\int  (    \ipa\jpa (N \na \ps) - N \ipa\jpa \na\ps  )\cdot  \fn { \ipa \jpa \vp} + \ipa \jpa (P \dv \vp) \cdot\fn{ \ipa\jpa \vp} }_{\mbox{ Quadratic term }}\nonumber\\
& \underbrace{+ \ipa\jpa ( \dv \vp \na \ps) \fn{ \ipa \jpa \vp}}_{\mbox{ Cubic term }}. \nonumber\\
%& \underbrace{ -%\underbrace{ 2\ep \int \ipa\jpa(P\cdot \na\ps) \ipa\jpa \psi }_
%%{= \,+ 
%2\ep \int \ipa\jpa(\calP \ps_z) \ipa\jpa\psi
%%}
%- \ep \int \ipa\jpa(|\na \ps|^2) \ipa\jpa\ps}_{\ep-\mbox{term}}. \nonumber
\end{align}

%%%%% 
%wait
%%%
$\bullet$ Case $ k= i+j =2$\\

First, we estimate
\begin{align*}
&\frac 12 \int |\na^2\vp|^2 \lr{ \fn{1}}''-\frac s 2 \int |\na^2\varphi|^2 \left( \fn{1}\right)'
\le -\int \na^2\vp\cdot\na^2\vp_z  \left( \fn{1}\right)'+\frac s 2 \int |\na^2\varphi|^2 \left|\left( \fn{1}\right)'\right|\\
&
\le  C\int |\na^2\vp||\na^2\vp_z |  \left( \fn{1}\right) +\frac {Cs}{2} \int |\na^2\varphi|^2  \left( \fn{1}\right) 
\le  \frac{1}{4}\int  |\na^3\vp |^2\left( \fn{1}\right)+C \int |\na^2\varphi|^2  \left( \fn{1}\right).
%&\le  \frac{1}{4}\int  |\vp_z |^2\left( \fn{1}\right)+C\la^2(C+\frac {Cs}{2} ) \int |\varphi_y|^2  \left( \fn{1}\right)\\
 \end{align*}

%Quadratic terms are 
%\[ \int  \fn{N''} \vp_{zz} \na \ps,   \quad  \int \fn{N'} \vp_{zz} \na \ps_z, \quad \int \fn{N'} \vp_{yz}\na\ps_y,\]
%\[ \int \fn{P''} \dv \vp \vp_{zz}, \quad  \fn{P'} \dv \vp ( \vp_{zz} + \vp_{yz}), \quad  \fn{P} (  \vp_{zz} \dv\vp_{zz}
% +  \vp_{yz} \dv \vp_{yz}+ \vp_{yy} \dv\vp_{yy}).\]

%Also from $ \fn{N'} < C$, it holds that
%\[  \pa_l  \lr{\fn{ f}}  \le     \fn{ \pa_l f}  +  \left|  \fn{f}  \right |.\]
%In particular it holds that 
%%\begin{align}\label{inequality:ng}
%%\| \fn{ fg }\|_{L^2} &\le C \| \frac{  f}{\sqrt N}\|_{L^4}\| \frac{ g}{\sqrt N}\|_{L^4}\\
%%& \le C \| \frac{ f}{\sqrt N}\|^{\frac 12}_{L^2} \| \na \lr{\frac{ f}{\sqrt N}}\|^{\frac 12}_{L^2}
%%\| \frac{ g}{\sqrt N}\|^{\frac 12}_{L^2} \| \na \lr{\frac{ g}{\sqrt N}}\|^{\frac 12}_{L^2}\\
%%& \le C \int  \fn{|\na f|^2} ( \int \fn{ |\na f|^2} + \fn{f^2}) 
%%\end{align}
%\begin{align}\label{inequality:ng_}
% \| \frac{  f}{\sqrt N}\|^2_{L^4} & \le C \| \frac{ f}{\sqrt N}\|_{L^2} \| \na \lr{\frac{ f}{\sqrt N}}\|_{L^2}
%  \le C  \| \frac{ f}{\sqrt N}\|_{L^2} \lr{ \| \frac{\na f}{\sqrt N} \|_{L^2} + \| \frac {f}{\sqrt N}\|_{L^2}}.
%\end{align}
What it follows we do not distinguish $\pa_y$ and $ \pa_z$ derivatives.\\
The quadratic terms are symbolically
\begin{align*}
 \fn{N''} \na \ps \na^2 \vp, \quad  \fn{N'} \na^2 \ps \na^2 \vp, %\quad \\
 \fn{P''} \na \vp \na^2 \vp, \quad   \fn{P'} \na^2 \vp \na^2 \vp  \quad \mbox{ and } \quad \fn{P} \na^3 \vp \na^2 \vp.\\
 %\na^2 \vp \na^2 \ps, \quad  \fn{1} \na \vp \na^2 \vp, \quad  \fn{1} \na^2 \vp \na^3 \vp, 
\end{align*}
By Lemma \ref{wave_prop}, 
%\[ |{N^{ (k)}}| < C, \quad   |{P^{(k)}}| <C,\quad \mbox{ for } k\le 2,\quad 
% \Big|\fn{N'}\Big|=|P+s|\leq C, \mbox{ and }\Big|(\fn{N'})'\Big|=|P'|\leq C.\]
%($N^{(k)}$ is the $k$th derivative of $N$).
%So
 these   terms are estimated by 
\begin{align*}
& C\int \Big|\fn{N''}\Big|| \na \ps|| \na^2 \vp| +
 \Big|\fn{P''} \Big||\na \vp ||\na^2 \vp|+ \Big|\fn{P'} \Big||\na^2 \vp ||\na^2 \vp|\\
% &\leq  C \|  \frac{\na^2\vp }{ \sqrt{N}} \|\lr{\| \frac{\na\ps }{\sqrt{N}}\| 
% +\| \frac{\na\vp }{\sqrt{N}}\|+\| \frac{\na^2\vp }{\sqrt{N}}\|}\\
 &\leq  C \lr{\| \frac{\na\ps }{\sqrt{N}}\|^2
 +\| \frac{\na\vp }{\sqrt{N}}\|^2+\| \frac{\na^2\vp }{\sqrt{N}}\|^2},
 %\na^2 \vp \na^2 \ps, \quad  \fn{1} \na \vp \na^2 \vp, \quad  \fn{1} \na^2 \vp \na^3 \vp, 
\end{align*} 
\begin{align*}
C\int \Big|\frac{P}{N}\Big|  |\na^3 \vp| |\na^2\vp|   \le
 C \|  \frac{\na^3\vp }{ \sqrt{N}} \|\| \frac{\na^2\vp }{\sqrt{N}}\|  \le
 \frac{1}{8} \|  \frac{\na^3\vp }{ \sqrt{N}} \|^2 +C\| \frac{\na^2\vp }{\sqrt{N}}\|^2
\end{align*} and 
\begin{align*}
& C\Big|\int \fn{N'} \na^2 \ps \na^2 \vp\Big|\leq C \int  | \na \ps| |\na^2 \vp|
+C\int   |\na \ps ||\na^3 \vp|\\
 &\leq  C \|  {\na\ps } \|\lr{\|  {\na^2\vp } \|+\|  {\na^3\vp } \|}\leq 
 C \|  {\na\ps } \|^2+ C\|  {\na^2\vp } \|^2+\frac 1 8 \|  {\na^3\vp } \|^2
\end{align*} where we used integration by parts for the last estimate.

%So by Lemma \ref{lemma01_}, we have
%\begin{align*} &\int \fn{ |\na^2 \vp|^2} +\int |\na^2 \ps|^2 + \int_0^t \int \fn{ |\na^3 \vp|^2} +
% \ep\int_0^t \int { |\na^3 \ps|^2}\\
% &\le C( \|  \vp_0\|_{2,w}^2  +
% \| \na \ps_0\|_{1,w}^2+ \| \ps_0\|^2) \\% +C\int_0^t \int N |\na^2 \ps|^2  \\
% & +\mbox{cubic terms } +\ep\mbox{ terms }
% \end{align*}

The cubic terms 
%$\ipa\jpa ( \dv \vp \na \ps) \fn{ \ipa \jpa \vp}$
 are symbolically written as 
$ \na^2 (\na \vp \na \ps) \fn{\na^2\vp}$. By using integration by parts once, it can be written as 
\begin{align*}
& \na  (\na \vp \na \ps) \fn{\na^3\vp} \quad \mbox{and } \quad \na  (\na \vp \na \ps) {\na^2\vp}%\underbrace
{(\frac{1}{N})'} .
\end{align*} So by assuming smallness of $M(t)$, these terms are estimated by
\begin{align*}
&   C\int (|\na^2 \vp|| \na \ps|+|\na \vp ||\na^2 \ps|) \fn{|\na^3\vp|} \leq
C\sqrt{M(t)}\int (|\na^2 \vp| + |\na^2 \ps|) \fn{|\na^3\vp|}\\
&\leq  C   \|\frac{\na^2 \vp}{\sqrt{N}}\|^2+ C \sqrt{M(t)}\|\frac{\na^2 \ps}{\sqrt{N}}\|^2  +\frac{1}{8}\|\frac{\na^3\vp}{\sqrt{N}}\|^2
\end{align*} and

\begin{align*}
&   C\int(|\na^2 \vp ||\na \ps|
+|\na \vp|| \na^2 \ps|){\fn{|\na^2\vp|}}\leq C  \|\frac{\na^2 \vp}{\sqrt{N}}\|^2+ C \sqrt{M(t)}\|\frac{\na^2 \ps}{\sqrt{N}}\|^2.
\end{align*} Note that we  used $M(t)$ for   $ \| \na\vp\|^2_{L^{\infty}} \le M(t)$ and  $ \|\na \ps\|^2_{L^{\infty}} <M(t)$.

%So by Lemma \ref{lemma01_}, we have
%\begin{align*} &\int \fn{ |\na^2 \vp|^2} +\int |\na^2 \ps|^2 + \int_0^t \int \fn{ |\na^3 \vp|^2} +
% \ep\int_0^t \int{ |\na^3 \ps|^2}\\
% &\le C( \|  \vp_0\|_{2,w}^2  +
% \| \na \ps_0\|_{1,w}^2+ \| \ps_0\|^2) % +C\int_0^t \int N |\na^2 \ps|^2 
% +CM(t)\int_0^t \int  \fn{|\na^2 \ps|^2} \\
% & +\ep\mbox{ terms }
% \end{align*}
 
 %%%
 %ep begins
 %%%
 \iffalse
 For the $\ep$ terms, %$-\underbrace{ 2\ep \int \ipa\jpa(P\cdot \na\ps) \ipa\jpa \psi }_
%{= \,+ 2\ep \int \ipa\jpa(\calP \ps_z) \ipa\jpa\psi}
%- \ep \int \ipa\jpa(|\na \ps|^2) \ipa\jpa\ps$,
 we can write them  symbolically:
$$ 
    \ep \int \na^2(\calP \ps_z) \na^2\psi  \quad \mbox{and } \quad
 \ep \int\nabla^2(|\na \ps|^2) \nabla^2\ps.
$$
After intergration by parts, we can estimate them by
\begin{align*}
 & C\ep \int |\na(\calP \ps_z)| |\na^3\psi|\leq
 C\ep \int (|\na \ps| +|\na^2 \ps|) |\na^3\psi|
 \leq  C\ep ( \|\nabla\ps\|^2+ \|\nabla^2\ps\|^2) +\frac{\ep}{4}\|\nabla^3\ps\|^2\\
\end{align*}
 and
\begin{align*}&C\ep \int|\nabla(|\na \ps|^2)| |\nabla^3\ps|\leq C\ep\int|\nabla\ps||\nabla^2\ps| |\nabla^3\ps|\\
&\leq C\ep  \sqrt{M(t)}\int |\nabla^2\ps| |\nabla^3\ps|\leq  C\ep M(t) \|\nabla^2\ps\|^2 +\frac{\ep}{4}\|\nabla^3\ps\|^2.
\end{align*}
 %%%
 %ep ends
 %%%
 \fi
 Up to now, by Lemma \ref{lemma01_}, we estimate \eqref{higher_} by
\begin{align}\label{beforeclaims_lemma23_} &\int \fn{ |\na^2 \vp|^2} +\int |\na^2 \ps|^2 + \int_0^t \int \fn{ |\na^3 \vp|^2} % + \ep\int_0^t \int{ |\na^3 \ps|^2}
\\
 &\le C( \|  \vp_0\|_{2,w}^2  +
 \| \na \ps_0\|_{1,w}^2+ \| \ps_0\|^2) 
 %+C\int_0^t \int N |\na^2 \ps|^2 
 +C \sqrt{M(t)}\int_0^t \int  \fn{|\na^2 \ps|^2}.  \nonumber 
 \end{align} This estimate is the second order version of  Lemma \ref{lemma1_0}.\\
 
 Now we claim the following two estimates which are the second order versions of Lemma \ref{lemma1_1} and
 \ref{lemma1_2}:
 
 \begin{align} 
 \label{claim1_lemma23_}
& \int_0^t \int N |\na^2 \ps|^2 %+ \ep\int _0^ t \int   {|\na \Del \ps|^2 } 
% \\&\qquad\qquad\qquad \qquad 
\le C \lr{   \|  \vp_0\|_{1,w}^2  +
 \| \na \ps_0\|_w^2+ \| \ps_0\|^2+\|\Del\ps_0\|^2} + C\sqrt{M(t)} \int_0^t  \int \fn{ |\na^2 \ps |^2}, \\
% +\la \int_0^t \int \fn{ |\na^3 \vp|^2}},  \\
& \int \fn {|\na^2  \ps|^2} +   \int _0^ t \int \fn {|\na^2 \ps|^2 } 
%+ \ep\int _0^ t \int \fn {|\na^3 \ps|^2 } 
\le  C    ( \| \na \ps_0 \|_{1,w}^2 + \|  \ps _0\|^2 + \|  \vp_0\|_{1,w}^2) + C\int_0^t \int \fn{ |\na^3\vp|^2}. \label{claim2_lemma23_}
\end{align}
 As we did in Lemma \ref{lemma1_1} and
 \ref{lemma1_2}, our plan is to prove \eqref{claim1_lemma23_} first and to use the result in order to get \eqref{claim2_lemma23_}. Then we will close the  estimate \eqref{beforeclaims_lemma23_} by using them. \\
 
 $\bullet$ Proof of \eqref{claim1_lemma23_}\\
 
Taking $ \dv$ to $\vp$ equation, we have
\begin{align*}
&\dv \vp_t - s \dv \vp_z - \Del \dv \vp \\
& = \quad N \Del \ps + \underbrace{ \na N \cdot \na\ps  +  \calP' \dv \vp + \calP ( \dv \vp)_z  
 +  \dv (\dv \vp \na\ps)}_{R_1 }.
\end{align*}
We multiply  $\Del \ps$ on the both sides and plug it in the equation
\[ \Del  \ps_t - s\Del \ps_z  %-\ep \Del\Del \ps 
=  %\Del\lr{-2\ep  P \cdot\na \ps -\ep  |\na \ps|^2 }+ 
\Del  (\dv \vp)\]
in order to get 
\begin{align*}
N |\Del \ps|^2 &=  ( \dv \vp_t - s\dv \vp_z - \Del \dv \vp) \Del \ps - \mbox{ R}_1\Del\ps\\
&= (\dv \vp \Del \ps )_t - \dv \vp \Del \ps_t - s \dv \vp_z \Del \ps -\underbrace{ \Del \dv \vp \Del \ps}_{(C)} -\mbox{ R}_1\Del\ps\\
&= (\dv \vp \Del \ps )_t - \dv \vp( s \Del \ps_z + \Del \dv \vp
) - s \dv \vp_z \Del \ps -(C)
%\\&\quad
-\mbox{ R}_1\Del\ps.
%-\ep\dv \vp(  \Del\underbrace{\lr{\Del \ps -2   P \cdot\na \ps -   |\na \ps|^2 }}_{R_2}).
\end{align*}

 % Recall the $\ps$ equation,
%\[  \ps_t - s \ps_z -\ep \Del \ps =  -2\ep  P \cdot\na \ps -\ep  |\na \ps|^2 + \dv\varphi
%.\]

 % Plugging the $\ps$ equation in $\ps$ equation,
 For $(C)$, we get
\begin{align*} \int(C)=&\int  \Del (\dv \vp) \Del \ps =\int  \Del (\ps_t - s \ps_z ) \Del \ps  %- \int  \Del (\ep \Del \ps-2\ep  P \cdot\na \ps -\ep  |\na \ps|^2) \Del \ps
\\
 & =\frac 12 \ddt \int |\Del \ps|^2. 
 %- \ep\int  \Del ( \Del \ps-2   P \cdot\na \ps -   |\na \ps|^2) \Del \ps
% =\frac 12 \ddt \int |\Del \ps|^2  %- \ep\int  \Del R_2 \Del \ps.
 \end{align*}
 So, integrating on the strip, we have
\begin{align*}   \int N |\Del \ps|^2 &= \ddt \int \dv \vp \Del \ps + \int |\na  \dv \vp|^2 - \frac 12 \ddt \int |\Del \ps|^2     - \int \mbox{ R}_1\Del\ps. %+ \ep\int  \Del (  \mbox{R}_2 ) \Del \ps -\ep \int \dv \vp(  \Del\mbox{R}_2).
 \end{align*}
 
 Note that 
 \begin{align*}
 \int_0^t\lr{\ddt \int \dv\vp \Del \ps - \frac 12 \ddt \int  |\Del\ps|^2}\leq {C(\|\na\vp(t)\|^2 
 +\|\Del \ps_0\|^2+\|\na\vp_0\|^2)}&
 \end{align*} by 
$ \int  |\dv\vp \Del \ps| \le C\| \na\vp \| ^2 + \frac 12   \| \Del \ps\|^2$.
%and $\int |\na  \dv \vp|^2\leq C\int |\na^2   \vp|^2/N.$
 
 %%%%%%
 %after this point...
 %%%%
 The terms in $R_1$ are estimated as follows;
 \begin{align*}
& \int   ( \na N \cdot \na\ps) \Del \ps \le C\int \fn{ |\na\ps|^2 } + \frac 14 \int  N |\Del \ps|^2, \\
& \int \calP' \dv \vp \Del \ps + \calP ( \dv \vp_z) \Del \ps \le 
C \lr{ \int \fn{ |\na \vp|^2} + \int \fn{|\na^2 \vp|^2}} + \frac 18 \int N |\Del \ps|^2,\\
&\int \dv (\dv\vp \na \ps ) \Del \ps  \le C ( \| \na \ps\|_{L^{\infty}} + \| \na \vp \|_{L^{\infty}})
 \lr{ C \int \fn{ |\na^2 \vp|^2} + C\int \fn{ |\Del \ps|^2} + \frac 14 \int N |\Del \ps|^2}\\
&\quad \qquad\qquad\qquad\qquad\leq C  
  \int \fn{ |\na^2 \vp|^2} + C\sqrt{M(t)}\int \fn{ |\Del \ps|^2} + \frac 14 \int N |\Del \ps|^2
 \end{align*} by assuming smallness of $M(t)$.

Collecting the above estimates and using  Lemma \ref{lemma01_}, we have
 \begin{align*}&\int_0^t \int N |\Del \ps|^2
%  +\ep\int_0^t \int  |\na\Del\ps|^2 
  \le    C  ( \|  \vp_0\|_{1,w}^2  +
 \| \na \ps_0\|_w^2+ \| \ps_0\|^2+\|\Del\ps_0\|^2) 
 + C\sqrt{M(t)} \int_0^t \int \fn{|\Del \ps|^2}.\end{align*}

To get \eqref{claim1_lemma23_} from the above estimate, we have to control 
$ \int N |\na^2\ps|^2$ by $\int N |\Del \ps|^2$ with lower order estimates. 
Observe 
\begin{align*}&\int N |\Del \ps|^2 = \int N( (\pa_{zz}\ps)^2 +  (\pa_{yy}\ps)^2+  2\pa_{zz}\ps \pa_{yy}\ps)\\
&= \underbrace{\int N( (\pa_{zz}\ps)^2 +  (\pa_{yy}\ps)^2+  2(\pa_{zy}\ps)^2)}_{\int N |\na^2\ps|^2}
-2 \int N' \pa_z\ps \pa_{yy} \ps
\end{align*} 
 
and  
\begin{equation} \label{la_0}\Big| \int N' \pa_1 \ps \pa_{22} \ps \Big|= \Big|-\int(P+s) N \pa_1\ps \pa_{22}\ps\Big|
\le \frac 14 \int N( \pa_{22}\ps)^2
+ C\int N |\na\ps|^2  \end{equation} by $ N'=-(P+s)N$
from  \eqref{np_copy}.
Thus  we get
\begin{align} \label{la_1} \int N |\na^2\ps|^2
&\leq 2 \int N( (\pa_{11}\ps)^2 + \frac 12 (\pa_{22}\ps)^2+  2(\pa_{12}\ps)^2)\\
&= 2\int N |\Del \ps|^2+4\int N' \pa_1 \ps \pa_{22} \ps -  \int N( \pa_{22}\ps)^2 \nonumber\\
&\leq 2\int N |\Del \ps|^2  +  \int N( \pa_{22}\ps)^2
+ C\int N |\na\ps|^2 -  \int N( \pa_{22}\ps)^2\nonumber \\
&\leq 2\int N |\Del \ps|^2   
+ C\int N |\na\ps|^2.\nonumber   
  \end{align} 
 So we have
 \[ \int_0^t\int N |\na^2\ps|^2  \le 
   C ( \|  \vp_0\|_{1,w}^2  +
 \| \na \ps_0\|_w^2+ \| \ps_0\|^2)+ C\int_0^t\int N |\Del \ps|^2 \] 
by Lemma \ref{lemma01_}. Thus we proved \eqref{claim1_lemma23_}.\\

$\bullet$ Proof of \eqref{claim2_lemma23_}\\

Multiplying $ w \na^2 \ps$ to the equation $$ \na^2 \ps_t - s \na^2\ps_z
% -\ep \Del \na^2\ps 
 = \na^2 (\dv \vp),
 %-2\ep \na^2( P \cdot\na \ps) -\ep  \na^2(|\na \ps|^2), 
$$ we have
\begin{align*} 
&\frac 12 (w |\na^2 \ps|^2)_t - \frac s2 ( w |\na^2 \ps|^2)_z + \frac s2 w' |\na^2 \ps|^2 
= w \na^2 (\dv \vp) \cdot \na^2 \ps. 
%+\underbrace{\ep w \na^2 \ps\cdot\lr{\Del \na^2\ps -2  \na^2( P \cdot\na \ps) -   \na^2(|\na \ps|^2)}}_{\ep- \mbox{terms}}.
\end{align*}
 
Recall that for some $c>0$, we have
\begin{align*}
w' = se^{sz} > c w  & \quad \mbox{for } z > 0, \\
w = 1+e^{sz} \le 2
< \frac{1}{c}N 
 & \quad \mbox{for } z\le0.
\end{align*}
 
Integrating on each half strip % (notation : $\int_{z>0}f:=\int_0^\infty\int_{[0,\la]} f (z,y,t)dy dz$) 
and in time,  we get
\begin{align*}
\int _{z>0} w |\na^2 \ps|^2 &
 \le  \int_{z>0} w |\na^2 \ps_0|^2 + \int_0^t \int_{z>0} w \na^2 (\dv \vp) \cdot \na^2 \ps -
c\int_0^t \int _{z>0} w |\na^2 \ps|^2 \\&\quad -\frac s2\int_0^t  \int_{[0,\la]} w |\na^2 \ps|^2(0, y) dy %+\int_0^t \int_{z>0}{\ep\mbox{-terms}}
\\
& \le  \int _{z>0} w |\na^2 \ps_0|^2 -\frac c2 \int_0^t \int_{z>0} w |\na^2 \ps|^2 +
  C \int_0^t \int_{z>0} w |\na^2(\na \cdot \vp)|^2 \\&\quad -\frac s2\int_0^t  \int_{[0,\la]} w |\na^2 \ps|^2(0, y) dy,
 % +\int_0^t \int_{z>0}{\ep\mbox{-terms}},
  \end{align*}
  \begin{align*}
 \int _{z<0} w |\na^2 \ps|^2 \le & \int_{z<0} w |\na^2 \ps_0|^2 + \int_0^t \int_{z<0} w \na^2 (\dv \vp) \cdot \na^2 \ps \\&\quad
 +\frac s2\int_0^t  \int_{[0,\la]} w |\na^2 \ps|^2(0, y) dy
 %+\int_0^t \int_{z<0}{\ep\mbox{-terms}}
 \\
&\le \int_{z<0} w |\na^2 \ps_0|^2 + C\int_0^t \int_{z<0}  | \na^3 \vp|||\na^2 \ps|  \\&\quad
  +\frac s2\int_0^t  \int_{[0,\la]} w |\na^2 \ps|^2(0, y) dy
  % +\int_0^t \int_{z<0}{\ep\mbox{-terms}}
  \\
  &\le  \int_{z<0} w |\na^2 \ps_0|^2 + C\int_0^t \int_{z<0}  N|\na^2 \ps|^2  
  + C\int_0^t \int_{z<0}  { \fn{|\na^3 \vp|^2}} \\&\quad
  +\frac s2\int_0^t  \int_{[0,\la]} w |\na^2 \ps|^2(0, y) dy.
  % +\int_0^t \int_{z<0}{\ep\mbox{-terms}}.
\end{align*}
Adding those, we get
\begin{align*}
&\int w |\na^2 \ps|^2 + \frac c2 \int_0^t \int w |\na^2 \ps|^2 \le \int w |\na^2 \ps_0|^2
+ C\int_0^t \int  \Big(N  |\na^2\ps|^2 + 
  w |\na^3 \vp|^2 % +  {\ep\mbox{-terms}}
  \Big).
%&\le C  \| \na \ps_0\|_{1,w}^2   +  C \int_0^t \int  {N |\na^2 \ps|^2}+ C \int_0^t \int \fn{ |\na^3 \vp|^2}+\int_0^t \int {\ep\mbox{-terms}}. 
\end{align*}
  
\iffalse
%%%%%
%\ep begins
%%
For the $\ep${-terms}, as before, we estimate

\begin{align*}
&\int {\ep\mbox{-terms}}=\ep\int { w \na^2 \ps\cdot\lr{\Del \na^2\ps -2  \na^2( P \cdot\na \ps) -   \na^2(|\na \ps|^2)}}\\
%&=\ep\int {w \na \ps\cdot \Del \na\ps + w \na \ps\cdot\lr{  -2  \na( P \cdot\na \ps) -   \na(|\na \ps|^2)}}\\
&=\ep\int { \Big(-w|\na^3\ps|^2 -w' \na^2 \ps\cdot  \na^2\ps_z
- w \na^2 \ps\cdot\lr{  2  \na^2( P \cdot\na \ps)} 
+  %w \na \ps\cdot
(w'\na\psi_z+w\Del\na\ps)
\na(|\na \ps|^2)}\Big)\\
&\leq-\ep\int   w|\na^3\ps|^2 \\&\,\, +C\ep\int \Big(w |\na^2 \ps|| \na^3\ps| + w| \na^2 \ps|( |\na^3\ps|+|\na^2\ps|+|\na\ps| ) 
+w|\na \ps|( |\na^2 \ps|    |\na^2 \ps| +|\na^3\ps| |\na^2 \ps|)\Big)\\
&\leq-\frac \ep 4\int   w|\na^3\ps|^2 +C\ep\int w |\na^2 \ps|^2 +C\ep\int w |\na \ps|^2 
 %\leq-\frac \ep 4\int   w|\na^3\ps|^2 +\frac{1}{2}\int w (|\na^2 \ps|^2 +|\na \ps|^2 )
\end{align*} where we used the estimate $|w'|\leq C|w|$ and for the last inequality, we assumed 
%$\ep$ and 
$M(t)$ small enough.\\
\fi
%%%%%
%\ep ends
%%%%
Collecting the above estimates, and using Lemma \ref{lemma01_} and the previous claim \eqref{claim1_lemma23_}, we have 
\begin{align*}
&\int w |\na^2 \ps|^2 +   (\frac{c}{2}-C
%(\ep+
\sqrt{M(t)})
%)
 \int_0^t \int w |\na^2 \ps|^2 
 %+ \frac\ep4 \int_0^t\int   w|\na^3\ps|^2 
 \\
&\le C ( \| \na \ps_0\|_{1,w}^2 + \| \ps_0\|^2 + \| \vp_0\|_{1,w}^2) 
%+ C \int_0^t \int { N|\na^2 \ps|^2}
+ C \int_0^t \int \fn{ |\na^3 \vp|^2} . 
\end{align*}

  Then, by making 
  %$\ep$ and
   $M(t)$ small enough, it proves the claim \eqref{claim2_lemma23_}.\\

Now we are ready to finish this proof for Lemma \ref{lemma23_} for the second order.
Plugging \eqref{claim2_lemma23_}  into \eqref{beforeclaims_lemma23_} with $M(t)$ small, we  have
\begin{align*}
 &\| \na^2\vp\|_w^2 + \| \na^2\ps\|^2 + \int_0^t \| \na^3 \vp\|_w^2 
 % + \ep \int_0^t\|\na^3\ps\|^2 
% \\
% &
 \le  C ( \| \na \ps_0\|_{1,w}^2 + \| \ps_0\|^2 + \| \vp_0\|_{2,w}^2). %+ C{M(t)}   \int_0^t \int \fn{|\na^2 \vp|^2}
\end{align*}
%Thus the first estimate of Lemma \ref{lemma1_} holds for a sufficiently small  $M(t)$.  
In turns,  we have
\begin{align}%\label{claims_lemma1_}
\int \fn{ |\na^2 \ps|^2} + \int_0^t \int \fn{ |\na^2 \ps|^2}
% + \ep\int_0^t \int \fn{ |\na^3 \ps|^2} 
 \le C ( \|  \vp_0\|_{2,w}^2   + \| \na \ps_0\|_{1,w}^2
+ \| \ps_0\|^2).
\end{align}   This proves Lemma \ref{lemma23_} for case $k=i+j=2$.

 \begin{remark}

Together with \ref{lemma01_}, we have proved 
\begin{align}\label{lemma2_} &
\|  \vp\|_{2,w}^2  +
 \| \na \ps\|_{1,w}^2+ \| \ps\|^2
%\int \fn{ |\na^2\vp|^2} +\int \fn {|\na^2  \ps|^2} 
+  \int_0^t  \sum_{l = 1,2,3} \| \na^{l} \vp\|_w^2
+\int_0^t   \sum_{l = 1,2}\| \na^l \ps\|_w^2
%+\ep\int_0^t  \sum_{l = 1,2,3} \| \na^{l} \ps\|_w^2
\\
 &\le C( \|  \vp_0\|_{2,w}^2  +
 \| \na \ps_0\|_{1,w}^2+ \| \ps_0\|^2). \nonumber
\end{align}
  \end{remark}
 \ \\

$\bullet$ Case $ k=i + j =3$\\
\ \\
We repeat the previous argument for $k=3$. 

{When we see either $N'''$ or $P'''$, we use the integration by parts to reduce the order of $N$ or $P$ by 1 (Recall Lemma \ref{wave_prop}).} Then by a similar argument, 
%So by \eqref{lemma2_}, 
we have
\begin{align*} &\int \fn{ |\na^3 \vp|^2} +\int |\na^3 \ps|^2 + \int_0^t \int \fn{ |\na^4 \vp|^2}  
% +\ep\int_0^t \int { |\na^4 \ps|^2}
 \\
 &\le C( \|  \vp_0\|_{3,w}^2  + \| \na^3 \ps_0\|^2+
 \| \na \ps_0\|_{1,w}^2+ \| \ps_0\|^2)  % +C\int_0^t \int N |\na^2 \ps|^2  \\
  +\mbox{the cubic terms }.% +\ep\mbox{-terms }.
 \end{align*}
All cubic terms can be estimated very similarly except
$ \int \na^2\vp \na^2 \ps \fn{\na^4 \vp}$.
 Note that by the Sobolev embedding, 
$$\|f\|_{L^4}\leq C(\|f\|_{L^2}+\|\na f\|_{L^2}).$$
So we  estimate
\begin{align}\label{k3e1}
&C\Big|\int \na^2\vp \na^2 \ps \fn{\na^4 \vp}\Big|  
%\le C \int \Big|  {\na^2 \ps  }   \frac{\na^2 \vp  }{   \sqrt {N}}\frac{\na^4 \vp}{\sqrt N} \Big|
\le C \|    {\na^2 \ps} \|_{L^4}
\|  \frac {\na^2 \vp}{\sqrt  N}\|_{L^4}\| \|  \frac {\na^4 \vp}{\sqrt N}\|_{L^2} \\
& \le \frac 18 \|  \frac {\na^4 \vp}{\sqrt N}\|^2_{L^2} + 
C    \lr{ \|  \na\left( \frac {\na^2 \vp}{\sqrt  N}\right)\|_{L^2} +\|  \frac {\na^2 \vp}{\sqrt  N}\|_{L^2}}^2 
\cdot \lr{ \|      {\na^3 \ps}  \|_{L^2} +\|    {\na^2 \ps} \|_{L^2}}^2 \nonumber\\
& \le \frac 18 \|  \frac {\na^4 \vp}{\sqrt N}\|^2_{L^2} + 
C    \lr{ \|    \frac {|\na^3 \vp|}{\sqrt N}+|\na^2\vp|
{|(\frac{1}{\sqrt{N}})'|}\|_{L^2} +C\|  \frac {\na^2 \vp}{\sqrt N}\|_{L^2}}^2 
\cdot \lr{ \|      {\na^3 \ps}  \|_{L^2} +{\|    {\na^2 \ps} \|_{L^2}}}^2. \nonumber
\end{align}
Using ${|(\frac{1}{\sqrt{N}})'|}{\leq \frac{C}{\sqrt{N}}}$ (Lemma \ref{wave_prop}), we get
\begin{align}\label{k3e2}& \lr{ \|    \frac {|\na^3 \vp|}{\sqrt N}+|\na^2\vp|
 {|(\frac{1}{\sqrt{N}})'|} \|_{L^2}
+C\|  \frac {\na^2 \vp}{\sqrt N}\|_{L^2}}^2 
\leq 
C \lr{\|    \frac {\na^3 \vp}{\sqrt N}  \|^2_{L^2} +\|  \frac {\na^2 \vp}{\sqrt N}\|_{L^2}^2}
\leq CM(t).
\end{align}
%So by \eqref{lemma2_}, we have
%\begin{align*} &\int \fn{ |\na^3 \vp|^2} +\int |\na^3 \ps|^2 + \int_0^t \int \fn{ |\na^4 \vp|^2} +
% \ep\int_0^t \int { |\na^4 \ps|^2}\\
% &\le C( \|  \vp_0\|_{3,w}^2  + \| \na^3 \ps_0\|^2+
% \| \na \ps_0\|_{1,w}^2+ \| \ps_0\|^2)+ C\sqrt{M(t)}\int_0^t \int  \fn{ |\na^3\ps|^2} \\% +C\int_0^t \int N |\na^2 \ps|^2  \\
% & +\ep\mbox{-terms }.
% \end{align*}

  \iffalse
 For the $\ep$-terms, we can write them  symbolically:
$$ 
    \ep \int \na^3(\calP \ps_z) \na^3\psi  \quad\mbox{ and }\quad
 \ep \int\nabla^3(|\na \ps|^2) \nabla^3\ps.
$$
After integration by parts, we can estimate these terms by
\begin{align*}
 & C\ep \int |\na^2(\calP \ps_z)| |\na^4\psi|\leq
 C\ep \int (|\na \ps| +|\na^2 \ps|+|\na^3 \ps|) |\na^4\psi|\\
 &\leq  C\ep ( \|\nabla\ps\|^2+ \|\nabla^2\ps\|^2+\|\nabla^3\ps\|^2) +\frac{\ep}{4}\|\nabla^4\ps\|^2\\
\end{align*}
 and
\begin{align*}&C\ep \int|\na^2(|\na \ps|^2)| |\na^4\ps|
\leq C\ep\int(|\nabla\ps||\nabla^3\ps|+|\nabla^2\ps||\nabla^2\ps|) |\nabla^4\ps|\\
&\leq C\ep \sqrt{M(t)}\int \Big(|\nabla^3\ps| |\nabla^4\ps|+\|\na^2\ps\|^2_{L^4}\cdot\|\na^4\ps\|_{L^2}\Big)\\
&\leq  C\ep \sqrt{M(t)} \|\nabla^3\ps\|^2 +\frac{\ep}{8}\|\nabla^4\ps\|^2+C\ep\underbrace{(\|\na^3\ps\|^2_{L^2}
+\|\na^2\ps\|^2_{L^2})}_{\leq C M(t)}\cdot\|\na^4\ps\|_{L^2}\\
&\leq  C\ep \sqrt{M(t)} \|\nabla^3\ps\|^2 +\frac{\ep}{4}\|\nabla^4\ps\|^2.
\end{align*}

 \fi
 
 Collecting the above estimates and using \eqref{lemma2_}, we get
 the third order version of \eqref{beforeclaims_lemma23_}:
\begin{align}\label{beforeclaims34_lemma23_} &\int \fn{ |\na^3 \vp|^2} +\int |\na^3 \ps|^2 + \int_0^t \int \fn{ |\na^4 \vp|^2} %+ \ep\int_0^t \int{ |\na^4 \ps|^2}
\\
 &\le C( \|  \vp_0\|_{3,w}^2  + \| \na^3 \ps_0\|^2+
 \| \na \ps_0\|_{1,w}^2+ \| \ps_0\|^2) 
 %+C\int_0^t \int N |\na^3 \ps|^2 
 +C\sqrt{M(t)}\int_0^t \int  \fn{|\na^3 \ps|^2}.   \nonumber
 \end{align}  
 
 As before, we claim the following two estimates for the third order derivatives:
 \begin{align} \label{claim3_lemma23_}
&\int_0^t \int N |  \na^3 \ps|^2 
% +\ep\int_0^t \int  |\na^2\Del\ps|^2 
  \le   C  ( \|  \vp_0\|_{2,w}^2  +
 \| \na \ps_0\|_{1,w}^2+ \| \ps_0\|^2+\|\na\Del\ps_0\|^2) 
+ C\sqrt{M(t)} \int_0^t \int \fn{|\na^3 \ps|^2}, \\
& \int \fn {|\na^3  \ps|^2} +   \int _0^ t \int \fn {|\na^3 \ps|^2 } 
%+ \ep\int _0^ t \int \fn {|\na^4 \ps|^2 } 
\le  C    ( \| \na \ps_0 \|_{2,w}^2 + \|  \ps _0\|^2 + \|  \vp_0\|_{2,w}^2)
+ C\int_0^t \int \fn{ |\na^4\vp|^2}. \label{claim4_lemma23_}
\end{align}

{These two claims can be proved similarly so we skip the proofs.}

Now we can finish the proof of  Lemma \ref{lemma23_} fully. Indeed
plugging \eqref{claim4_lemma23_}  into \eqref{beforeclaims34_lemma23_} with $M(t)$ small, we  have
\begin{align*}
 &\| \na^3\vp\|_w^2 + \| \na^3\ps\|^2 + \int_0^t \| \na^4 \vp\|_w^2
%   + \ep \int_0^t\|\na^4\ps\|^2 
  \le  C ( \| \na \ps_0\|_{2,w}^2 + \| \ps_0\|^2 + \| \vp_0\|_{3,w}^2). %+ C{M(t)}   \int_0^t \int \fn{|\na^2 \vp|^2}
\end{align*}
%Thus the first estimate of Lemma \ref{lemma1_} holds for a sufficiently small  $M(t)$.  
So from \eqref{claim4_lemma23_},  we have
\begin{align*}%\label{claims_lemma1_}
\int \fn{ |\na^3 \ps|^2} + \int_0^t \int \fn{ |\na^3 \ps|^2} 
%+  \ep\int_0^t \int \fn{ |\na^4 \ps|^2}
 \le C ( \|  \vp_0\|_{3,w}^2   + \| \na \ps_0\|_{2,w}^2
+ \| \ps_0\|^2).
\end{align*}   This proves Lemma \ref{lemma23_} for case $k=i+j=3$.
 \end{proof}
 
Finally, we obtain Proposition  \ref{uniform_}:
\begin{proof}[Proof of Proposition \ref{uniform_}]

  Adding  Lemma \ref{lemma01_} and Lemma \ref{lemma23_} for $k=2,3$, we have\\
  \begin{equation}\begin{split}\label{lemma3_}
&  {\sup_{s\in[0,t]}\| \vp(s)\|_{3,w}^2     + \| \na \ps(s)\|_{2,w}^2 + \| \ps(s)\|^2} + \int_0^t  \sum_{l = 1}^4 \| \na^{l} \vp\|_w^2
+\int_0^t   \sum_{l = 1}^3\| \na^l \ps\|_w^2
%+\ep\int_0^t     \| \na^{4} \ps\|_w^2
\\
&   \le
 C  { ( \|  \vp_0\|_{3,w}^2  +
 \| \na \ps_0\|_{2,w}^2+ \| \ps_0\|^2)}.
\end{split}\end{equation}  This proves Proposition  \ref{uniform_}. 
\end{proof}
 \iffalse
\begin{remark} Note that the above estimate \eqref{lemma3_} does not contain any control of  $\|\ps\|_w$. This is not a problem when $\ep>0$. Indeed, for $\ep>0$, thanks to the Dirichlet condition (in $y$), we can use the Poincar\'{e} inequality (see \eqref{ineq:poincare3}) to get

$$\| \ps\|^2_w=\int \frac{|\ps|^2}{N}\leq C\la^2\int \frac{|\ps_y|^2}{N}\leq \| \na \ps\|_{w}^2\leq 
C  { ( \|  \vp_0\|_{3,w}^2  +
 \| \na \ps_0\|_{2,w}^2+ \| \ps_0\|^2)}.$$  However, when $\ep=0$, the periodic condition (in $y$) is not enough to get such an estimate.
\end{remark}
\fi
 \ \\
 
\subsection{Proof of Proposition \ref{zerolocal}}\label{pf_local_exist}

%{\it Proof of Proposition \ref{zerolocal}}\\
\indent
Let us remind the equation \eqref{eq0:phipsi}:
%\begin{align}\label{eq00:phipsi} \begin{aligned}
%\varphi_t - s \varphi_z - \Delta \varphi
%& =   N \na \psi + P\na \cdot \varphi + \na\cdot \varphi \na \psi \\
%\psi_t - s \psi_z -\ep \Delta \psi &=  -2\ep  P \cdot\na \psi -\ep  |\na \psi|^2 + \dv\varphi
%\end{aligned}
%\end{align}
\begin{align}\label{eq00:phipsi} \begin{aligned}
\varphi_t - s \varphi_z - \Delta \varphi
& =   N \na \psi + P\na \cdot \varphi + \na\cdot \varphi \na \psi, \\
\psi_t - s \psi_z 
 &=  
  \dv\varphi
\end{aligned}
\end{align}
with data $(\varphi_0, \psi_0) $ satisfying $ \| \varphi_0\|^2_{H^3_{w,p}} + \| \psi_0\|^2_{H^3_p} + \|\na \psi_0\|^2_{H^2_{w,p}}  < M $. 
\indent
First we shall find the local solution $\Phi = (\varphi, \ps)$ of \eqref{eq00:phipsi} 
in 
\[X_T = \{ \Phi \,|\, \vp \in L^{\infty}((0, T): H^3_p) \cap L^2((0, T): \dot{H}_p^4), \psi \in L^{\infty}((0, T): H^3_p)\}\]
%\[X_T= L^{\infty}([0, T): H^3_{p} \times H^3_{p} \times  H^3_{p}) 
%\bigcap L^2([0, T); H^4_p\times H^4_p \times H^4_p)\]
\[ \| \Phi\|_{X_T} = \sup_{ 0<t <T} \| \Phi\|_{H^3_p} + \int_0^T \| \na \vp\|_{H^3_p} ds\]
 for a sufficiently small $T$.
%We shall find the local solution in  $L^{\infty}([0, T]: H^3_{0})$ at first, then show the regularity to persist
%if initial data is such that $\varphi\in H^{3}_{w,0}$ and $\psi \in H^3_{0}$ and $ \nabla \psi \in H^2_{w, 0}$.

Step 1. We define the linear map
$\mathcal F:X_T \to X_T $ mapping  
${\tilde\Phi:=}\color{black}( \tilde \varphi, \tilde \ps)$ to ${{\Phi}:=}\color{black}(\varphi, \ps)$.
%\[Y_T= L^{\infty}((0, T): H^3_{0}\times H^3_0 \times H^3_0)
% \bigcap W^{1, \infty}((0, T); H^1_0 \times H^1_0\times H^1_0).\]
%\begin{align}\label{eq:linear}\begin{aligned}
%\varphi_t - s \varphi_z - \Delta \varphi
%& =   N \na \psi )+ P\na \cdot \varphi + \na\cdot \tilde \varphi \na \tilde \psi \\
%\psi_t - s \psi_z -\ep \Delta \psi &=  -2\ep  P \cdot\na \psi -\ep  | \na \tilde \ps|^2 + \dv\varphi\\
%(\varphi, \psi)(0)& = (\varphi_0, \psi_0).
%\end{aligned}\end{align}
\begin{align}\label{eq:linear}\begin{aligned}
\varphi_t - s \varphi_z - \Delta \varphi
& =   N \na \psi + P\na \cdot \varphi + \na\cdot \tilde \varphi \na \tilde \psi, \\
\psi_t - s \psi_z
  &=  \dv\varphi,\\
(\varphi, \psi)|_{t=0}& = (\varphi_0, \psi_0).
\end{aligned}\end{align}
We approximate the above  linear system by Galerkin method. Since $H^3_p (\Omega)$ is a separable 
Hilbert space, there exists an orthonormal basis
 $  \om_j, j=1, 2, 3 \dots$ of $H^3_p$. 
We define $\Phi^k = ( \varphi^{k1}, \varphi^{k2}, \psi^k)^t $ by
\[ ( \varphi^{k1},\varphi^{k2}, \psi^k )^t= \sum_{j=1}^k  \om_j g^{jk}(t),\] 
where   $g^{jk} = (g^{jk}_1, g^{jk}_2, g^{jk}_3)^t $ solves the following ode system
\begin{equation}\label{eq:ode}
 \ddt \llangle \Phi^k, \om_j\rrangle +  \llangle B\Phi^k, \om_j \rrangle = 
 \llangle f, \om_j \rrangle
\end{equation}
with
%\[B =   \begin{pmatrix}
%-\Del - P \dv -s\partial_z & -N\nabla \\ -\dv & -\ep \Del+2\ep P\dv -s\partial_z.
%\end{pmatrix}  \]
\[B =   \begin{pmatrix}
-\Del -s\partial_z +\calP  \pa_z & 0 & -N\pa_z \\ 0& -\Del -s\partial_z & -N\pa_y \\
-\pa_z &-\pa_y& -s\partial_z
\end{pmatrix}, \quad f = \begin{pmatrix}  \dv \tilde \vp \pa_z \tilde \ps\\
 \dv \tilde \vp \pa_y \tilde \ps\\ 0 \end{pmatrix}.  \]
 Here, $\llangle \Phi, w \rrangle$  is the vector valued function defined by 
 \[\llangle \Phi, w \rrangle =
  ( \langle \Phi_1, w\rangle,   \langle \Phi_2, w\rangle,  \langle \Phi_3, w\rangle)^t,\]
  where  $ \langle \cdot, \cdot \rangle$ is the $H^3$ inner product of scalar valued functions, that is 
  \[ \langle f, g \rangle = \sum_{i+j \le 3}\int \pa_y^i \pa_z^j f \overline{\pa_y^i \pa_z^j g} dydz.\]
  The initial data $g^{jk}(0)$ is determined by $\Phi_0 = \sum_{j=1}^{\infty} c^{j}w_j$ such that
 \[ g^{jk}(0) = c^j \quad  0\le j\le k \mbox{ for any } k\ge 0.\]
The \eqref{eq:ode} 
determines the $ 3k \times 3k $ system of ordinary differential equations with respect to $ g_l^{jk},  1\le j\le k, l= 1, 2, 3$,
%of the form of 
%\[\pa_t
%\begin{pmatrix} g_l^{jk} \end{pmatrix} + A \begin{pmatrix} g_l^{jk} \end{pmatrix} = F\]
%for a constant matrix $A$ and a given source $F$. ,
hence $g_l^{jk}$ exists globally.
By multiplying $(g^{jk})^t$ on the left to \eqref{eq:ode} and summing over $j$, we have also 
\begin{equation}\label{eq:kthinner}
\frac{1}{2}   \ddt \langle \Phi^k , \Phi^k \rangle  +  \langle B\Phi^k, \Phi^k \rangle  =
 \langle f, \Phi^k \rangle .
\end{equation}
(The inner product of vectors are defined componentwise.)\\

Step 2.  We apply the integration by parts to the term $\langle B\Phi^k, \Phi^k \rangle$, and by usual energy estimates we arrive at 
\[ \| \Phi^k\|^2_{H^3_p} + \int_0^ t \| \na \varphi^k \|^2_{H^3_p} ds \le C \| \Phi_0\|^2_{H^3_p} +
C \int_0^t (\|\Phi^k\|_{H^3_p} + \| \na \tilde \vp\|_{H^3_p}^2  + \| \tilde \Phi\|_{H^3_p}^2 )ds\]
 with $C$ depending on $\| (\calP, N)\|_{L^{\infty}}$.
   Finally by Gronwall's inequality, we have the uniform estimate of  $\Phi^k$ in $X_T$ for a small $T$ depending only on $\Phi_0$ and $\|\Phi\|_{X_T}$. Then
$\Phi^k$ converges weakly to $\Phi\in X_T$, which is  a distribution solution of \eqref{eq:linear}. 
Note that \eqref{eq:linear} is linear, so the weak convergence is sufficient for passing to the limit. Moreover
$\Phi$ satisfies 
\[ \ddt \Phi + B\Phi = f \,\, a.e \,\mbox{ on } (0, T), \quad \Phi(0)= \Phi_0.\]
%(\comment{see Temam Navier-Stokes equations p.191}). 
for which we refer to Theorem $1.1$ in Chapter $3$ of \cite{Te}.
 From the equation above, we have also $\Phi = (\vp, \ps) \in 
W^{1, \infty}((0, T): H^1_p\times H^1_p)$. In particular, $\Phi$ is in $C([0, T): H^3_p\times H^3_p)$.
%Note that \eqref{eq:linear} is linear, so the weak convergence is sufficient. 
%Then $\Phi^k$ is compact in $L^2((0, T): B_0)$ for $B_0$ compactly embedded in $H^3_0$. We may take $B = H^1$ and $B_0 = H^1_{0,loc}$. This $\phi$ solves
% By further energy estimate, we hopf to show 
% $\color{red} \Phi  =\color{black}(\varphi, \psi)\in L^{\infty}((0, T): H^3_{0}\times H^3_0)$, $\na \Phi \in   L^2((0, T): H^{3}_0 \times H^{3}_0))$ such that 
%\[ \ddt ( \color{red}  \Phi   \color{black}, \Psi) + ( B\color{red}  \Phi   \color{black}, \Psi) = (f, \Psi)  \mbox{ for all } \Psi\in H^3_0, \]
%and $\color{red} \Phi  \color{black}(0)= (\varphi_0, \psi_0)$ \\
%$\bullet$ Moreover, the distribution solution is unique. It is actually the fuction in
%$C([0, T]: H^3_{0}\times H^3_0) \bigcap L^2([0, T]; H^4_0 \times H^4_0)$.

Step 3.
We can write \eqref{eq00:phipsi} into the following Duhamel's formula.
\[ \Phi = S(t) \Phi_0 + \int_0^t S(t-s)( \mbox{ RHS of }\eqref{eq:linear}) (s) ds,\]
where 
$S(t)\Phi_0 = ( e^{-t\Del}\varphi_0, e^{-\ep t\Del} \ps_0)^t$.
The injectivity of the map $\mathcal F$ is clear from the formula above.
We proceed as usual. Let $\| \Phi_0\|_{H^3_p \times H^3_p} \le M/2$ and $\| \tilde \Phi\|_{X_T} \le M$.
We can show by heat kernel estimates
\[ \| \Phi\|_{X_T} \le M, \quad 
\| \Phi - \Psi\|_{X_T} \le \frac 12 \| \tilde \Phi - \tilde \Psi\|_{X_T} \]
if $T$ is sufficiently small. By the fixed point theorem we obtain the  solution of \eqref{eq00:phipsi} in $X_T$.
Then the similar energy estimates as presented in Section 3 show that the solution $(\varphi, \psi)$ satisfies 
$ \varphi \in H^3_{w,p}, \na \ps \in H^3_{w, p}, \psi \in H^3_p, \na \psi \in H^2_{w,p}$
 and \eqref{double} holds, for which we omit details.
  
%For the $\ep=0$ case, we let 
%\[ \varphi = \sum_{n\in \mathbb Z} \varphi_n (x,t) e^{2\pi i n/\la y}, \quad 
%\ps =  \sum_{n\in \mathbb Z} \ps_n (x,t) e^{2\pi i n/\la y}.\]
%Substituting the above into \eqref{eq:linear}, we obtain the infinite system of ode in terms of the $n$th fourier coefficient. First solve $(\varphi_n, \psi_n)$ and  define $\Phi= (\varphi, \psi)$ from $ \tilde \Phi=(\tilde \varphi, \tilde \psi)$. By energy estimates to \eqref{eq:linear}, show $\Phi \in X_T$. Then writing 
%\eqref{eq:linear} into the Duhamel's formula, we find the local solution by contraction.
\end{subsection}
%Note that we have used $M(t)$ for $ \| \vp\|_{L^{\infty}} \le M(t)$, $ \| \dv\vp\|_{L^{\infty}} \le M(t)$, and  $ \|\na \ps\|_{L^{\infty}} <M(t)$.

\ \\

\section{Proof of Theorem \ref{theoremli}}\label{sectionuniformwith}
In this section we introduce the proposition \ref{uniform_with} below which implies
Theorem \ref{theoremli}. 
\begin{proposition}\label{uniform_with}
There exist constants 
$\ep_0>0,\,
% \delta_0>0,\, 
 C_0$, and $\la_0>0$ such that
for $ 0< \ep \leq \ep_0$,
and 
  $\la\in(0,\la_0]$, we have the following:\\
 If  $(\varphi, \psi)$ be a local solution of \eqref{eq0:linear} on $[0,T]$ for some $T>0$ with $M(T)<\infty$  {for some initial data $(\varphi_0, \psi_0)$ which is $\la$-periodic  in $y$ and which has the zero average in $y$ ($\bar{\varphi_0}=\bar{\psi_0}=0$)}, 
then   we have 
\[   M(T) +  \int_0^{T}  \sum_{l = 1}^4 \| \na^{l} \vp\|_w^2
+\int_0^{T}   \sum_{l = 1}^3\| \na^l \ps\|_w^2 + \ep\int_0^{T}   \| \na^4 \ps\|_w^2 \le C_0M(0).\]
\end{proposition}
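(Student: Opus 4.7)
\smallskip
\noindent\textbf{Proof plan for Proposition \ref{uniform_with}.}
The strategy is to mimic the energy-estimate scheme already carried out for $\ep=0$ in Section \ref{sectionuniform} (Lemmas \ref{lemma0_}--\ref{lemma23_}), but to modify each step in two ways: (i) replace the algebraic cancellation $\calP/N=-1/s$ (which fails when $\ep>0$ because the wave equation \eqref{np} is no longer scalar) by the Poincar\'e inequality applied to full functions (not just the $y$-mean subtraction), and (ii) track the new $\ep$-terms, which on the one hand provide an additional dissipation $\ep\,\|\nabla^{k+1}\psi\|_w^2$, but on the other hand create boundary-type contributions $\ep w'\nabla^k\psi\cdot\nabla^k\psi_z$ when the weight is introduced. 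Since the linearized system \eqref{eq0:linear} is \emph{linear}, no bootstrap on $M(t)$ is needed, and the smallness of the undesired terms is supplied exclusively by the smallness of $\la$ and $\ep$.

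First I would verify that the zero-mean condition $\bar\vp=\bar\ps=0$ is preserved under \eqref{eq0:linear}. Indeed, averaging in $y$ (and using periodicity) kills every $y$-derivative and reduces \eqref{eq0:linear} to a one-dimensional linear parabolic system for $(\bar\vp,\bar\ps)$ driven only by $\bar\vp,\bar\ps$ themselves; by uniqueness, $\bar\vp\equiv\bar\ps\equiv 0$ for all $t\geq 0$. This is crucial because it upgrades the conditional Poincar\'e inequality used in \eqref{cancelation} to the unconditional one
\[
\|\vp(z,\cdot_y)\|_{L^2_y([0,\la])}\leq C\la\,\|\partial_y\vp(z,\cdot_y)\|_{L^2_y([0,\la])},\qquad \text{and similarly for } \ps,
\]
which in turn controls, for instance, the troublesome quadratic term $\int(P/N)\vp^1\,\dv\vp$ by $C\la\|\na\vp\|_w^2$ and the non-cancelling piece $\tfrac{s}{2}\int|\vp|^2(1/N)'-\tfrac12\int|\vp|^2(1/N)''$ by $C\la^2\|\na\vp\|_w^2$ via Lemma \ref{wave_prop}. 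Both can then be absorbed into the dissipation $\int|\nabla\vp|^2/N$ once $\la$ is small enough.

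Next I would carry out the $L^2$-level estimate exactly as in Lemma \ref{lemma0_}, but including on the $\ps$-equation the two new contributions $-2\ep\int P\cdot\na\ps\,\ps$ and $-\ep\int(-\Delta\ps)\ps=\ep\|\na\ps\|^2$. The first is handled by integrating by parts once and using $|P'|\leq M$, Poincar\'e, and Cauchy--Schwarz:
\begin{align*}
\Bigl|-2\ep\int P\cdot\na\ps\,\ps\Bigr|=\Bigl|\ep\int P'|\ps|^2\Bigr|\leq C\ep\la^2\|\na\ps\|^2,
\end{align*}
which is absorbed into $\ep\|\na\ps\|^2$ for small $\la$. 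Then one repeats Lemmas \ref{lemma1_0}--\ref{lemma01_} to obtain the first-order closed estimate. The auxiliary bound $\int_0^t\int N|\na\ps|^2$ of Lemma \ref{lemma1_1}, which uses the specific structure $N\na\ps$ on the RHS of the $\vp$-equation, goes through essentially unchanged because the new $\ep$-pieces that arise when testing against $\na\ps$ are $\ep\int\na(\Delta\ps-2P\cdot\na\ps)\cdot\na\ps$, which yields $-\ep\|\na^2\ps\|^2$ plus lower-order errors absorbable by $\ep$-smallness. The weighted Lemma \ref{lemma1_2} is where one meets the main new obstacle: multiplying $\na\ps_t-s\na\ps_z-\ep\Delta\na\ps=\ldots$ by $w\,\na\ps$ and integrating by parts on the diffusive term produces $-\ep\int w|\na^2\ps|^2-\ep\int w'\,\na\ps\cdot\na\ps_z$; using $|w'|\leq Cw$ and Young, this extra term is controlled by $\tfrac{\ep}{4}\int w|\na^2\ps|^2+C\ep\int w|\na\ps|^2$, the first being absorbed and the second dealt with by the $\ep$-smallness.

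I would then iterate the whole scheme for second- and third-order derivatives, following Lemma \ref{lemma23_} and its claims \eqref{claim1_lemma23_}--\eqref{claim4_lemma23_}, with the same pattern: each application of $\ep\Delta$ against a weighted test function produces a principal good term $-c\,\ep\int w|\na^{k+1}\ps|^2$ (which, summed, yields the extra $\int_0^T\ep\|\na^4\ps\|_w^2$ on the LHS of \eqref{energyineqwith}), plus a tail $\ep\int w'|\na^k\ps|^2$ controlled by $|w'|\leq Cw$ and smallness of $\ep$. The cubic terms that complicated the $\ep=0$ analysis (in particular the $L^4$--$L^4$--$L^2$ estimate \eqref{k3e1}) are absent here, so the higher-order steps are actually simpler. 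The main obstacle is thus essentially bookkeeping: carefully absorbing the many $\ep$-error terms against either the gained $\ep$-dissipation or the $\ep$-smallness, while confirming that the constants from Lemma \ref{wave_prop} remain uniform as $\ep\to 0^+$ (which they do, because the KPP theorem gives monotone profiles with bounds depending only on $s$ and $n_-$ for $\ep$ in a bounded range). Summing the first-, second-, and third-order estimates produces \eqref{energyineqwith}, exactly as \eqref{lemma3_} was obtained by adding Lemmas \ref{lemma01_} and \ref{lemma23_}.
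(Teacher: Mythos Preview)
Your proposal is correct and follows essentially the same approach as the paper: preserve the zero-mean condition, replace the $\calP/N=-1/s$ cancellation by the full Poincar\'e inequality, and rerun the Lemma \ref{lemma0_}--\ref{lemma23_} scheme while tracking the $\ep$-diffusion terms and absorbing the $\ep w'$-boundary pieces via $|w'|\le Cw$ and smallness of $\ep$. The one point you pass over too quickly is the uniform-in-$\ep$ bounds on $\calP',\calP''$: these do not follow directly from the KPP lemma but require a separate ODE energy argument (the paper's Lemma \ref{wave_propwith}), so you should budget real work there.
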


\begin{remark}
\begin{enumerate}
\item
 {For this linear case, we do not need any smallness of $M(T)$ since such a condition can pop up only from the nonlinear term estimate which is not present in this linear case \eqref{eq0:linear}.}
\item As in Proposition \ref{uniform_}, $C_0$ does not depend on the size of $T>0$.
 \end{enumerate}
\end{remark}
For the proof we shall proceed the similar line of arguments as for Propotision \ref{uniform_} with a minimal modification.
First we recall that we assumed $\bar \vp_0=0$ and $ \bar \ps_0=0$. 
Since the coefficient functions in the system \eqref{eq0:linear}   depend on $z$ only, the zero average conditions are preserved over time. It enables us to use the Poincar\'{e} inequality   to $(\vp, \ps)$ with respect to $y-$variable:
\begin{equation*}%\label{poincare} 
\| \vp(z,\cdot_y) \|_{L^2_y([0,\la])} \le C\la \| \pa_{y} \vp(z,\cdot_y)\|_{L^2_y([0,\la])}, \quad
\| \ps(z,\cdot_y) \|_{L^2_y([0,\la])} \le C\la \| \pa_{y} \ps(z,\cdot_y)\|_{L^2_y([0,\la])},
\, % \quad\mbox{ for } 
z\in\mathbb{R}.
\end{equation*}
Second we collect some properties of traveling waves $(N, \mathcal P)$.
Note that $\ep>0$ and $N, P= (\calP, 0) $ solves
\begin{align}\label{npp}\begin{aligned} 
- sN'- N''& =  (N\calP)', \\
-s\calP' -  \ep \calP ''& =  -2\ep \calP  \calP'+  N'
\end{aligned}
\end{align} with the boundary condition \eqref{np_copy_bd}. All the properties in Lemma \ref{wave_prop} hold still true when $\ep>0$.
\begin{lemma}\label{wave_propwith}
%For $s>0$, t
%For the f $(N,\calP)$ of \eqref{np_copy} with $s>0$,
For $\ep_0>0$, %Let $\ep < \frac 14$.
there exists a constant %a constant  $\ep_0>0$
%$0<\delta<1$,
%and 
$M>0$ %independent of $\ep$
 such that for any $0<\ep\leq\ep_0$,
%for any $\ep\in[0,\ep_0]$, 
we have:\\
 $$ \frac{w}{M}\leq\frac{1}{N }\leq Mw,$$
% $$ 
% %(1-\delta)\frac{1}{s}\leq
% \Big|\frac{\calP_\ep}{N_\ep}\Big|\leq (1+\delta)\frac{1}{s}$$
% $$ 0<N_\ep\leq(1+\delta)(1+\ep_0)s^2$$
 %$$-(1+\delta)s\leq \calP_\ep<0$$
 %$$ (\frac{1}{N_\ep})'\leq s\Big(2(1-\delta_2)\frac{1}{N_\ep}\Big)$$
% $$ \Big(\frac{1}{N_\ep}\Big)'\geq0$$
  %$$|\calP'_\ep|<(1+\delta)(1+\epsilon_0)s^2$$
$$ |{N^{ (k)}}| \leq M, \quad   |{\calP^{(k)}}|  \leq M,\quad \mbox{ for } 0\leq k\le 2,\quad \mbox{ and } $$
$$ \frac{N'}{N}=-(\calP+s),   \quad 
\Big|(\frac{1}{N})'\Big|+\Big|(\frac{1}{N})''
\Big|\leq \frac{M}{N},\quad 
\Big|(\frac{1}{\sqrt {N}})'\Big|\leq \frac{M}{\sqrt {N}}.$$
($N^{(k)}$ is any $k-$th derivative of $N$)
\end{lemma}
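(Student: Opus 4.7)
The plan is to parallel the proof of Lemma \ref{wave_prop} but verify that every constant can be chosen uniformly for $\ep \in (0,\ep_0]$. The key point is that the construction of $(N,\calP)$ in Section \ref{existence_traveling} via the auxiliary function $W=e^{-sz}C$ and the KPP-type equation \eqref{W} works for all $\ep\geq 0$, and that the relevant parameters depend continuously (and non-degenerately) on $\ep$.

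First I would record the uniformity of the basic parameters. The wave speed $s=s(\ep)=\sqrt{n_-/(1+\ep)}$ lies in the compact interval $[\sqrt{n_-/(1+\ep_0)},\sqrt{n_-}]$, and the decay rate $\la(\ep)$ from Lemma \ref{KPPtheorem}, using $(1+2\ep)^2+4\ep(1+\ep)=1+8\ep+8\ep^2$, expands to $\la(\ep)\to s$ as $\ep\to 0^+$, so $\la(\ep)$ is bounded above and below away from zero uniformly on $(0,\ep_0]$. Together with the $\ep$-independent left limit $W(-\infty)=n_-/N_0$ from Lemma \ref{KPPtheorem} and the decay $W\sim e^{-sz}$ as $z\to+\infty$, this yields constants in the two-sided asymptotics of $W$ (and hence of $N=N_0 W$) that are uniform in $\ep$. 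From this, the estimate $w/M\leq 1/N\leq Mw$ follows exactly as in the $\ep=0$ case.

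Next, the algebraic identity $N'/N=-(\calP+s)$ is $\ep$-free: it comes directly from the Cole-Hopf structure $N=N_0 e^{-sz} C$ and $\calP=-C'/C$, differentiating gives $N'/N=-s+C'/C=-(s+\calP)$. From this we immediately deduce
\[ \Big|\Big(\frac{1}{N}\Big)'\Big|=\frac{|\calP+s|}{N}\leq \frac{M}{N}, \qquad \Big|\Big(\frac{1}{\sqrt{N}}\Big)'\Big|=\frac{|\calP+s|}{2\sqrt{N}}\leq \frac{M}{\sqrt{N}}, \]
using that $\calP\in[-s,0]$ is monotone (inherited from $W'<0$) and therefore uniformly bounded. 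Differentiating once more and using $(\calP+s)'=\calP'$ gives $|(1/N)''|\leq M/N$ once we know $|\calP'|$ is uniformly bounded.

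The bounds $|N^{(k)}|\leq M$ and $|\calP^{(k)}|\leq M$ for $0\leq k\leq 2$ then follow from the ODE system \eqref{npp}: monotonicity and boundary data give $L^\infty$ bounds on $N,\calP$; using $N'=-N(\calP+s)$ bounds $N'$; using $N''=-(N(\calP+s))'$ plus a bound on $\calP'$ bounds $N''$. For $\calP'$ and $\calP''$, one rewrites \eqref{npp} as a system in $(\calP,\calP')$ and uses the bounds on $N,N'$ together with the boundary conditions $\calP'(\pm\infty)=0$ to bootstrap $L^\infty$ bounds that are independent of $\ep$; the only potentially dangerous term is $\ep\calP''$, but combined with $-s\calP'=N'+\ep(\calP^2)'-\ep\calP''$ one sees that the $\ep=0$ identity $-s\calP'=N'$ is perturbed by $O(\ep)$ quantities, keeping $\calP'$ uniformly bounded.

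The main obstacle is checking the $\ep$-uniformity of the asymptotic constants as $\ep\to 0^+$, in particular ensuring that the Lemma \ref{KPPtheorem} asymptotics $W-W_-\sim Ce^{\la x}$ and $W\sim e^{-sx}$ have prefactors bounded uniformly in $\ep$. This requires a compactness/continuity argument on the KPP shooting problem: since the coefficients of \eqref{W} depend smoothly on $\ep$ and the stable manifold at each endpoint varies continuously with $\ep$, the normalized traveling wave profile $W(\cdot;\ep)$ depends continuously on $\ep\in[0,\ep_0]$ in $C^2_{\mathrm{loc}}$ with uniform exponential decay at both ends, which gives the required uniform control.
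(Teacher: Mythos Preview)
Your reduction is correct and matches the paper's: the identity $N'/N=-(\calP+s)$ is indeed $\ep$-free, and once $|\calP'|,|\calP''|\leq M$ uniformly in $\ep$, all the remaining bounds follow exactly as you describe. The paper makes the same reduction.

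The gap is in your argument for $|\calP'|$ and $|\calP''|$. Writing $-s\calP'=N'+\ep(\calP^2)'-\ep\calP''$ and calling the $\ep$-terms ``$O(\ep)$ perturbations'' is circular: $\ep(\calP^2)'=2\ep\calP\calP'$ involves $\calP'$ itself, and $\ep\calP''$ is uncontrolled until you already have a bound on $\calP''$. Rewriting \eqref{npp} as a first-order system $(\calP,\calP')'=(\calP',\ep^{-1}(\ldots))$ is singular as $\ep\to 0$, so a direct ODE bootstrap does not give $\ep$-uniform bounds either. A maximum-principle attempt (at an interior extremum of $\calP'$ one has $\calP''=0$, hence $\calP'(s+2\ep\calP)=-N'$) only yields a bound when $s+2\ep\calP>0$; since $\calP\in[-s,0]$ this requires $\ep<1/2$ and does not cover arbitrary $\ep_0$.

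The paper takes a genuinely different route for this step: multiply the $\calP$-equation by $\calP''$ and integrate over $(-\infty,z)$. The sign structure ($\calP<0$, $\calP'>0$, $N'<0$, $N>0$) forces the terms $\ep\calP(\calP')^2$, $-\ep\int(\calP')^3$, $-\int N(\calP')^2$, $-\int N'\calP\calP'$ to be nonpositive and drop out, leaving
\[
\frac{s}{2}(\calP')^2+\ep\int_{-\infty}^z(\calP'')^2\ \leq\ -s\int_{-\infty}^z N'\calP'+\frac{s}{4}(\calP')^2+\frac{(N')^2}{s},
\]
where the right-hand side is controlled by the elementary estimates $\int(\calP')^2\leq s^{-2}\int(N')^2\leq C$. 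This gives $|\calP'|\leq M$ uniformly in $\ep\in(0,\ep_0]$ with no smallness assumption on $\ep$. The bound on $\calP''$ is obtained analogously by multiplying the once-differentiated equation by $\calP'''$ and exploiting the same sign structure. Your compactness/continuity sketch for the asymptotic constants of $W$ is a reasonable alternative to the paper's direct appeal to Lemma~\ref{KPPtheorem}, but it does not fill the missing $\calP',\calP''$ estimate.
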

\begin{proof}
The first inequality follows  Lemma \ref{KPPtheorem}.
 For the second line, it is obvious that  $k=0$  case holds. If $|\mathcal P^{(k)}|<M$ $(k=1,2)$ are shown,
 % which is proven in Lemma $3.4$ \cite{LiLiWa}, 
 the other items follow  the equation \eqref{npp}. In the below we present the proof of $|\mathcal P^{(k)}|<M$ $(k=1,2)$.
\\
\indent $\bullet$ $ k=1$\\
  This case $|\mathcal P'|<C$  was proved in \cite{LiLiWa}. We sketch its proof for readers convenience.\\
  \indent
  % The second equation of \eqref{npp} is in fact $-s\calP- \ep \calP' = -\ep \calP^2 + N$.
  Multiplying $\cpq$ to the second equation of \eqref{npp} and integraing over $(-\infty, z)$, we have
  \begin{align*}
  \frac s2 \cpp^2 + \ep \infz{\cpq^2}& \le \infz {( \ep\cp^2-N)'\cpq} \\
  & =\ep \infz{2 \cp \cpp\cpq } - \infz{N'\cpq}\\
  &= \ep \infz {\cp ((\cpp)^2)'} + \infz{ (-sN' - (N\cp)')\cpp} - N'\cpp \\
  & = \ep \cp \cpp^2 - \ep\infz{\cpp^3} - s \infz{N'\cpp} - \infz{(N\cpp^2+N'\cp\cpp)}- N'\cpp\\
  &\le -s \infz {N'\cpp} + \frac s4 \cpp^2 + \frac{N'^2}{s}
  \end{align*}
  by using monotonicity of $N, \cp$ and  $\cp<0$.
  Next multiplying $\cpp$ to the same equation, we can show 
 \[ \infz{ \cpp^2} \le \frac{1}{s^2} \infz {\np^2}.\]
 On the other hand, from the first equation $\np = -sN - N\cp$, we find
 \[ \infz { \np^2} \le - \| \np\|_{L^{\infty}}\infz{\np} 
% \le CN 
 \le C.\]
 Thus we have
 \begin{align}\label{Ncp}
 \infz{\cpp^2} \le C, \quad \infz{\np^2} \le C.
 \end{align}
 Also note that $\np^2 \le C$.
Plugging the bounds in the inequality
\[  \frac s2 \cpp^2 + \ep \infz{\cpq^2} \le  -s \infz {N'\cpp} + \frac s4 \cpp^2 + \frac{N'^2}{s},\]
we conclude $ |\cpp| < C$.\\

  $\bullet$ $k=2$\\
 First we observe  $\cp'' (-\infty)= N''(-\infty)=0$ from the equations for any $\ep>0$.
Multiplying $\cp'''$ to the equation $-s \cpq - \ep \cp''' = -\ep(\cp^2)'' + N''$, 
and integrating over $(-\infty, z)$, we have
\begin{align}\label{cpq} \begin{aligned}
\frac s2 \cp''^2 + \ep \infz{ \cp'''^2} & = \infz{ ( \ep \cp^2 -N)'' \cp'''}\\
& = 2\ep \infz{ \cpp^2 \cp''' }+ \ep \cp \cpq^2 - \ep \infz{ \cpp \cpq^2}
- \infz {N'' \cp''' }\\
&\le 2\ep \infz {\cpp^2 \cp'''}- \infz {N'' \cp'''}\\
& = 2 \infz{ \cpp^2 ( -s\cpq + (\ep \cp^2 - N)'')} + \infz { N''' \cpq} - N''\cpq.
\end{aligned}\end{align}
We use the monotonicity of $N, \cp$ and $\cp<0$ in the third line. 
We find
\begin{align*}
\infz{ \cpp^2 ( -s\cpq + (\ep \cp^2 - N)'')} & = -\frac{s}{3} { \cpp^3} + \infz{\cpp^2 (\ep \cp^2 - N)''}\\
& \le \infz{\cpp^2 (\ep \cp^2 - N)''}
\end{align*}
which can be shown bounded by substituting $N''$, $\ep \cpq$ with lower order terms following the equations and using  $ N^{(k)} <C$ , $|\cp^{(k)}|<C$ for $k=0, 1$ and \eqref{Ncp}. On the other hand, we find
\begin{align*}
\infz { N''' \cpq}& = \infz {( -s N' - (N\cp)')' \cpq} \\
& = \infz{ -s N'' \cpq} - \infz { (N\cp)'' \cpq} ,
\end{align*}
\begin{align*}
-\infz { (N\cp)'' \cpq}& = -\infz{ N \cpq^2} - \frac 12\infz{ N' (\cpp^2)'} -\infz {\cp N'' \cpq}\\
&\le \frac 12 \infz{ ( -sN' - (N\cp)')\cpp^2} - \frac 12 N' \cpp^2 - \infz{\cp N'' \cpq}
\end{align*}
 The first integral and the second term are bounded. It remains to estimate 
 $\infz {N''\cpq} $ and $N''\cpq$ by the Cauchy-Schwartz inequality. 
 From the first equation, it is easy to see $\infz{ N''^2} < C $.  By multiplying $\cpq$ to the equation 
   $-s \cpq - \ep \cp''' = -\ep(\cp^2)'' + N''$ and integrating over $(-\infty, z)$, we have
   \begin{align*}
   \frac s2 \infz {\cpq^2} + \frac{\ep}{2} \cpq^2 
   & \leq  \ep \infz{ (\cp^2)'' \cpq }+ \frac{1}{2s} \infz {(N'')^2}\\
   & = 2\ep \infz { \cpq^2  \cp  } + \frac{2\ep}{3} (\cpp)^3 + \frac{1}{2s} \infz {(N'')^2}\\
   & \leq  \frac{2\ep}{3} (\cpp)^3 + \frac{1}{2s} \infz {(N'')^2}\leq C.
   \end{align*} %Note that $|\cp| <s$, so if $\ep<1/4$,
    So we have
    $\infz {\cpq^2} \le C$. 
    Finally we estimate
    \begin{align*}
    |N''\cpq| \le \frac s 4 \cpq^2 + \frac{N''^2}{s} \le \frac s4 \cpq^2 + \frac{1}{s} ( sN' + (N\cp)')^2\leq \frac s4 \cpq^2 + C.
    \end{align*}
    Plugging the estimates above into \eqref{cpq}, we obtain
    $|\cpq|<C$.
\end{proof}
Proposition \ref{uniform_with} is proved by the lemmas below which are parallel to Lemma \ref{lemma0_}, \ref{lemma1_0}, \ref{lemma1_1}, 
\ref{lemma1_2}, \ref{lemma01_}, \ref{lemma23_}. Here we present in detail the zeroth order estimate of $\ep>0$ case, where the zero average condition, hence the Poincar\'{e} inequality is essentially used. 
 {For the higher order estimates, we sketch their proof. }%All the informations of traveling wave solution proved in 
%Lemma \ref{wave_propwith} can be used in the higher order estimates. 

\begin{lemma}\label{lemma0_e}
If %$M(T)>0$, 
$\la>0$
%, and $\ep\ge 0$ are
 is sufficiently small, we have the following: % for any $t\in[0,T]$,
\begin{equation}\label{eq_lemma0_e}
\| \psi \|^2 + \| \varphi\|_w^2 + \int_0^t \| \na \varphi\|_w^2
+{\ep}\int_0^t \|\na \psi\|^2
\le C ( \| \psi_0\|^2 + \| \varphi_0\|_w^2 ).% C\sqrt{M(t)} \int_0^t  \|\na \psi\|_w^2.
%\int_\mathbb{R} \fn{|\na \psi|^2}.
\end{equation}
\end{lemma}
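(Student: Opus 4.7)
The plan is to mimic the $\ep=0$ energy estimate of Lemma \ref{lemma0_} but, crucially, to replace the algebraic cancellation $\calP/N=-1/s$ (which fails when $\ep>0$, see \eqref{npp}) by a Poincar\'{e}--type bound based on the preserved zero--mean condition $\bar\vp(t)=\bar\ps(t)=0$. Since the coefficients $N$ and $\calP$ in \eqref{eq0:linear} depend only on $z$, averaging the system in $y$ shows that $\bar\vp_0=\bar\ps_0=0$ propagates in time, so the Poincar\'{e} inequality
\[
\|\vp(z,\cdot)\|_{L^2_y([0,\la])}\le C\la\|\pa_y\vp(z,\cdot)\|_{L^2_y([0,\la])},\qquad
\|\ps(z,\cdot)\|_{L^2_y([0,\la])}\le C\la\|\pa_y\ps(z,\cdot)\|_{L^2_y([0,\la])}
\]
is available for all $t>0$.

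First I multiply the $\vp$--equation of \eqref{eq0:linear} by $\vp/N$ and the $\ps$--equation by $\ps$, and integrate over $\Om$. Integration by parts yields the dissipation term $\int|\nabla\vp|^2/N$ from $-\int(\Delta\vp\cdot\vp)/N$ (plus lower--order terms weighted by $(1/N)'$ and $(1/N)''$, which are bounded by $C/N$ via Lemma \ref{wave_propwith}) and $\ep\int|\nabla\ps|^2$ from $-\ep\int\ps\Delta\ps$. The cross terms coming from $N\nabla\ps$ on the right of the $\vp$--equation and $\nabla\cdot\vp$ on the right of the $\ps$--equation cancel exactly, since
\[
\int N\nabla\ps\cdot\frac{\vp}{N}+\int\ps\,\nabla\cdot\vp=\int\nabla\ps\cdot\vp-\int\nabla\ps\cdot\vp=0.
\]

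Next I handle the two remaining ``bad'' terms. For the quadratic term $\int(\calP/N)\,\vp^1\nabla\cdot\vp$, I use $|\calP|\le C$ (Lemma \ref{wave_propwith}) and Cauchy--Schwarz to bound it by $C\|\vp/\sqrt N\|\,\|\nabla\vp/\sqrt N\|$; the Poincar\'{e} inequality above together with the comparability $w/M\le 1/N\le Mw$ then gives $\|\vp/\sqrt N\|\le C\la\|\nabla\vp/\sqrt N\|$, so the whole term is bounded by $C\la\|\nabla\vp/\sqrt N\|^2$ and is absorbed by the dissipation for $\la$ small. For the $\ep$--term $-2\ep\int\ps\,\calP\cdot\nabla\ps=-\ep\int\calP(\ps^2)_z=\ep\int\calP'\ps^2$, I use $|\calP'|\le C$ and the Poincar\'{e} inequality in $y$ to bound it by $C\ep\la^2\|\nabla\ps\|^2$, which is absorbed by the $\ep\|\nabla\ps\|^2$ dissipation for $\la$ small. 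The side terms involving $(1/N)'$, $(1/N)''$ coming from $-s\int(\vp_z\cdot\vp)/N$ and $-\int(\Delta\vp\cdot\vp)/N$ are likewise bounded by $C\|\vp/\sqrt N\|^2\le C\la^2\|\nabla\vp/\sqrt N\|^2$ and absorbed the same way.

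Putting everything together yields a differential inequality of the form
\[
\tfrac12\tfrac{d}{dt}\Big(\|\vp\|_w^2+\|\ps\|^2\Big)+c\,\|\nabla\vp\|_w^2+c\ep\,\|\nabla\ps\|^2\le 0
\]
for some $c>0$ once $\la$ is chosen sufficiently small; integrating in time gives \eqref{eq_lemma0_e}. The main obstacle is precisely the loss of the clean identity $\calP/N=-1/s$, which forced the $\ep=0$ proof; I overcome this by (i) testing with $\vp/N$ to keep the dissipation coercive in the weighted norm, (ii) exploiting the preserved zero--mean condition to apply Poincar\'{e} in the transversal direction, and (iii) choosing $\la$ small enough to absorb all non--dissipative contributions. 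No smallness of $M(T)$ is needed since \eqref{eq0:linear} has no quadratic nonlinearities.
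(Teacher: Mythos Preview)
Your proof is correct and follows essentially the same approach as the paper's: test the $\vp$-equation with $\vp/N$ and the $\ps$-equation with $\ps$, use the preserved zero-mean condition to apply the Poincar\'{e} inequality in $y$ (replacing the $\ep=0$ identity $\calP/N=-1/s$), and absorb all non-dissipative terms for $\la$ small. The only cosmetic differences are that you integrate the $\ep$-term by parts to $\ep\int\calP'\ps^2$ before applying Poincar\'{e} (the paper applies Cauchy--Schwarz directly to $-2\ep\int\calP\ps_z\ps$), and you bound the $(1/N)''$ side term directly via $|(1/N)''|\le C/N$ rather than integrating by parts first; both variants work equally well.
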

 
\begin{proof}
Recall the equations \eqref{eq0:linear}.
Multiply  $\fn{\varphi}$ to the $\varphi$ equation and $\psi$ to the $\psi$ equation. 
Integrating by parts, we have
\begin{align*}
&\frac 12 \ddt \left( \int \fn{ |\varphi|^2} +\int  |\psi|^2 \right) 
+ \int\fn{ \sum_{i} |\na \varphi^i|^2} + \ep\int |\na \psi|^2+ 
\frac s 2 \int |\varphi|^2 \left( \fn{1}\right)'\\
&= \frac 12 \int |\varphi|^2 \left(  \fn{1}\right)''+
\int \fn{\calP} \varphi^1 \na \cdot \varphi  
%+  \int \fn{\varphi \cdot \na\psi} \na\cdot \varphi
-%\underbrace{ 2\ep \int P\cdot \na\ps \psi }_{= 
 2\ep \int \calP \ps_z \psi
 %}
%- \ep \int |\na \ps|^2 \ps.
\end{align*}

  Note that   
\eqref{relation} does not hold for $\ep>0$. Instead, we estimate  the quadratic term $\int \fn{\calP} \varphi^1\na \cdot \varphi$ by
\begin{align*}
 \Big|\int \fn{\calP} \varphi^1\na \cdot \varphi\Big|  & \le   \| \vp^1/\sqrt{N}\|  \| \na\vp/\sqrt{N}\|
  %\le (1+\delta)s \la C \| \pa_y\vp^1/\sqrt{N}\|_2 \| \na\vp\|_w
  \le C \la   \| \na\vp\|^2_w,\\
\end{align*} where we used $|\calP|\leq C
%(1+\delta)s
$ (Lemma \ref{wave_propwith}) with the mean-zero condition  for $\vp_0$ 
($\bar{\vp_0}(z)=0$), which is preserved in time, in order to use  the
Poincar\'{e} inequality \begin{equation}\label{ineq:poincare2} 
\| \vp(z,\cdot_y) \|_{L^2_y([0,\la])} \le C\la \| \pa_{y} \vp(z,\cdot_y)\|_{L^2_y([0,\la])} \quad\mbox{ for } z\in\mathbb{R}.
\end{equation} %Recall that $N$ is independent of $y$.
Also we estimate
\begin{align*}
&\frac 12 \int |\vp|^2 \lr{ \fn{1}}''-\frac s 2 \int |\varphi|^2 \left( \fn{1}\right)'
\le -\int \vp\cdot\vp_z  \left( \fn{1}\right)'+\frac s 2 \int |\varphi|^2 \left|\left( \fn{1}\right)'\right|\\
&\le  C\int |\vp||\vp_z |  \left( \fn{1}\right) +\frac {Cs}{2} \int |\varphi|^2  \left( \fn{1}\right) 
\le  \frac{1}{4}\int  |\vp_z |^2\left( \fn{1}\right)+(C+\frac {Cs}{2} ) \int |\varphi|^2  \left( \fn{1}\right)\\
&\le  \frac{1}{4}\int  |\vp_z |^2\left( \fn{1}\right)+C\la^2(C+\frac {Cs}{2} ) \int |\varphi_y|^2  \left( \fn{1}\right)\le 
   (\frac{1}{4}+C\la^2) \int \frac{|\nabla\varphi|^2 }{N}
 \end{align*} where we used the estimate $|(1/N)'|\leq C/N$ in Lemma \ref{wave_propwith} and the Poincar\'{e} inequality \eqref{ineq:poincare2}.

%By assuming smallness of $\la$, %for any $\ep\ge 0$,
% we arrive at 
%\begin{align*}
%&\frac 12 \ddt \left( \int \fn{ |\varphi|^2} +\int  |\psi|^2 \right) +\frac 18 \int\fn{   |\na \varphi|^2} + \ep\int |\na \psi|^2 \\ 
%& \leq -2\ep \int \calP \ps_z \psi - \ep \int |\na \ps|^2 \ps +C M(t) \int \fn{| \na \psi|^2}
%\end{align*}

For the $\ep$-term, we estimate
\begin{align*}
% - \ep \int |\na \ps|^2 \ps&\leq \ep C \sqrt{M(t)} \|\na\psi\|^2, \\
 -2\ep \int \calP \ps_z \psi &\leq   \ep C \|\psi_z\| \|\psi\| 
 \le    \ep   C \la \|\psi_z\| \|\psi_y\|\le    \ep   C \la  \|\na\psi\|^2%.
\end{align*}
%For the first estimate, we used $\|\psi\|_{L^\infty}\le C\|\psi\|_{H^2}\leq C\sqrt{ M(t)}$.
%For the second estimate, 
where we used  the mean-zero condition for $\psi$    to use the
Poincar\'{e} inequality for $\psi$: \begin{equation}\label{ineq:poincare3} 
%\| \psi \|_{L^2_y([0,\la])} \le C\la \| \psi_{y} \|_{L^2_y([0,\la])} .
\| \ps(z,\cdot_y) \|_{L^2_y([0,\la])} \le C\la \| \pa_{y} \ps(z,\cdot_y)\|_{L^2_y([0,\la])} \quad\mbox{ for } z\in\mathbb{R}.
\end{equation}

By assuming   %$M(t)$ and
 $\la>0$ small enough, %so that $C M(t)< \frac 14$,
 we arrive at 
\begin{align*}
&\frac 12 \ddt \left( \int \fn{ |\varphi|^2} +\int  |\psi|^2 \right) 
+\frac 18 \int\fn{   |\na \varphi|^2} + \frac{\ep}{2}\int |\na \psi|^2 %+ \frac 12 \int \lr{ \fn{\calP}}' |\vp^1|^2\\
\leq  
%C \sqrt{M(t)} \int \fn{| \na \psi|^2}
0
\end{align*} which proves Lemma \ref{lemma0_e}.
% for $\ep>0$. Note that we used the smallness of $\ep$ to use Lemma \ref{wave_prop}.
 
\end{proof}

%%%%%%%%%%%%%%%%%%%%%%%%%%%%%%%%%%%%%%555
%first order
%%%%%%%%%%%%%%%%%%%%%%%%%%%%%%%55
%First order estimate:
\begin{lemma}\label{lemma1_0_e} If %$M(t)>0$, 
$\la>0$ %, and $\ep\geq0$ are
is small enough, then
we have 
\begin{equation}\begin{split}\label{ineq:lem1__e}
&\| \na \vp\|_w^2  + \| \na \ps\|^2+ \int_0^t \| \na^2 \vp\|_w^2 
+ \ep\int_0^t \|\na^2 \psi\|^2\\
&\le C ( \| \na \vp_0\|_w^2 + \| \na \ps_0\|^2+  \| \psi_0\|^2 + \| \varphi_0\|_w^2 ) 
% \phantom{XXXXX}
+  C \int_0^t \int {N} |\na \ps|^2.% + C\sqrt{M(t)} \int_0^t \int \fn{\abs{\na \ps}}.
\end{split}\end{equation}\end{lemma}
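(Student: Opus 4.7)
The plan is to mimic the proof of Lemma \ref{lemma1_0} (the $\ep=0$ case) closely, with the extra chemical diffusion $-\ep\Del\ps$ contributing the dissipation term $\ep\int_0^t\|\na^2\ps\|^2$ on the left-hand side, and with the new linear drift $-2\ep P\cdot\na\ps$ being absorbed with the help of the Poincar\'{e} inequality enabled by the mean-zero condition. Concretely, first I would differentiate the linearized system \eqref{eq0:linear} in $z$, multiply the first equation by $\vp_z/N$ and the second by $\ps_z$, and integrate over $\Om$; then repeat with $\pa_y$ in place of $\pa_z$. Summing produces
\[
\tfrac12\ddt\bigl(\|\na\vp\|_w^2+\|\na\ps\|^2\bigr)+c\|\na^2\vp\|_w^2+\ep\|\na^2\ps\|^2 \le (\text{weight terms})+(\text{quadratic }P/N'\text{ terms})+(\ep\text{ terms}),
\]
exactly as in Lemma \ref{lemma1_0}, except the $\psi$-dissipation now produces the last term on the left and two new commutator-type quantities on the right coming from $-2\ep \na(P\cdot\na\ps)\cdot\na\ps$.

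Second, the structural terms are handled as before: the weight derivative contributions $\lr{\fn{1}}', \lr{\fn{1}}''$ are bounded via Lemma \ref{wave_propwith} and Young's inequality, the cross term $\int\fn{N'}\na\ps\cdot\vp_z$ is absorbed into $C\|\sqrt{N}\na\ps\|^2+C\|\na\vp/\sqrt N\|^2$, and the $P$-quadratic terms are controlled using $|P|,|P'|\le C$ plus the Poincar\'e inequality, whose hypothesis $\bar{\vp}\equiv 0$ is preserved in time because the coefficients depend only on $z$. Integrating in time and using Lemma \ref{lemma0_e} to absorb the $\|\na\vp/\sqrt{N}\|^2$-type pieces then yields the asserted inequality modulo the $\ep$ terms.

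The main new work is to control the two $\ep$-terms $-2\ep\int (P\cdot\na\ps)_z\,\ps_z-2\ep\int (P\cdot\na\ps)_y\,\ps_y$. After expanding, they split into $\ep\int P'\ps_z^2$ (and its $y$-analogue) and $\ep\int P\,\ps_{zj}\ps_j$. The first is bounded by $C\ep\|\na\ps\|^2$, which by Poincar\'e (using $\bar\ps\equiv 0$, hence $\bar{\ps_z}\equiv 0$ and $\overline{\ps_y}\equiv 0$ automatically) is $\le C\ep\la^2\|\na^2\ps\|^2$; the second by Cauchy--Schwarz is $\le C\ep\|\na\ps\|\|\na^2\ps\|\le C\ep\la\|\na^2\ps\|^2$. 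Both can be absorbed into the left-hand-side dissipation $\tfrac{\ep}{2}\|\na^2\ps\|^2$ by taking $\la_0>0$ sufficiently small.

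The main obstacle, and the reason a nonlinear analogue of Theorem \ref{theoremnc} is avoided for $\ep>0$, is that the $\ep$-terms scale with only one power of $\ep$ in the dissipation while the nonlinear cubic terms $\ep|\na\ps|^2$ would require a stronger weighted control than what one obtains here; in our linear setting, however, those cubic terms are absent, so the smallness of $\la$ alone (not of $M(t)$) closes the estimate. Once this first-order step is established, the higher-order analogues of Lemmas \ref{lemma1_1}--\ref{lemma23_} proceed line-by-line as in Section \ref{mainproof}, with each new $\ep$-commutator estimated in the same fashion, producing the extra term $\ep\int_0^T\|\na^4\ps\|_w^2$ claimed in Proposition \ref{uniform_with}.
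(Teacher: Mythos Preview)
Your proposal is correct and follows essentially the same line as the paper's proof: differentiate \eqref{eq0:linear} in $z$ and $y$, test with $\vp_z/N$, $\vp_y/N$, $\ps_z$, $\ps_y$, handle the weight and $P,N'$ quadratic terms exactly as in Lemma \ref{lemma1_0}, and then integrate in time using Lemma \ref{lemma0_e}. The one minor deviation is in how you dispose of the residual $C\ep\|\na\ps\|^2$ coming from the $\ep$-drift: you invoke the Poincar\'e inequality on $\na\ps$ (valid since $\bar\ps=0$ implies $\overline{\ps_z}=\overline{\ps_y}=0$) to absorb it into $\tfrac{\ep}{2}\|\na^2\ps\|^2$ for $\la$ small, whereas the paper simply bounds it by Young as $C\ep\|\na\ps\|^2+\tfrac{\ep}{4}\|\na^2\ps\|^2$ and then controls $\ep\int_0^t\|\na\ps\|^2$ via the $\ep$-dissipation already present in Lemma \ref{lemma0_e}. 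Both routes close the estimate; the paper's is slightly more economical since it does not require an additional smallness of $\la$ at this step.
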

 
\begin{proof}
\iffalse
Differentiating in $z$, we have
\begin{align*}
\vp_{tz} - s\vp_{zz} - \Del \vp_z & = N' \na\psi + N \na \psi_z + P' \dvg\vp + P \dvg \vp_z,\\% + \na \psi_z \dvg \vp + \na\psi \dvg \vp_z,\\
\psi_{tz} -s\psi_{zz} -\ep \Del \ps_z & = -2\ep  (P \cdot\na \ps)_z 
%-\ep  (|\na \ps|^2)_z  
+ \dvg \vp_z.
 \end{align*}
Multiply  $\fn{\varphi_z}$ and $\psi_z$ to the above equation. 
Integrating by parts, we have
\begin{align*}
&\frac 12 \ddt \lr{ \int \fn {\abs{\vp_z} } + \abs{\ps_z}} + \int \sum_{ij}\fn{ \abs{\pa_j \vp^i_{z}}}
+ \ep\int |\na \psi_z|^2\\
 & = %\underbrace
 { \frac 12 \int \abs{\vp_z} \left( \fn{1} \right)''- \frac s2 \int \abs{\vp_z} \Fn' }
% _{\leq  \frac{1}{2s}\int|\vp_{zz}|^2(\frac{1}{N})'\leq \frac 12 \int\frac{|\vp_{zz}|^2}{N}}
 \\
 & +\int \fn{N'} \na \ps \vp_z  +\int \fn{P'} \dv \vp \vp_z + \int \fn{P} \dv \vp_z \vp_z\\
% &+  \int \na \ps_z \dv \vp \fn{\vp_z} + \int \na\ps \dv \vp_z \fn{\vp_z}\\
& -\underbrace{ 2\ep \int (P\cdot \na\ps)_z \psi_z }_
{=  2\ep \int (\calP \ps_z)_z \psi_z}.
%- \ep \int (|\na \ps|^2)_z \ps_z.
\end{align*}
Similarly,  %for
%\begin{align*}
%\vp_{ty} - s\vp_{zy} - \Del \vp_y & = N \na \psi_y + P \dvg \vp_y + \na \psi_y \dvg \vp + \na\psi \dvg \vp_y\\
%\psi_{ty} -s\psi_{zy} -\ep \Del \ps_y & = -2\ep  (P\cdot \na \ps)_y -\ep  (|\na \ps|^2)_y  + \dvg \vp_y\\
%\end{align*}
we get
\begin{align*}
&\frac 12 \ddt \lr{ \int \fn {\abs{\vp_y} } + \abs{\ps_y}} +\int \sum_{ij}\fn{ \abs{\pa_j \vp^i_{y}}} 
+ \ep\int |\na \psi_y|^2\\
   &=  
   %\underbrace
   { \frac 12 \int \abs{\vp_y} \left( \fn{1} \right)'' - \frac s2 \int \abs{\vp_y} \Fn' }
   %_{\leq  \frac{1}{2s}\int|\vp_{yz}|^2(\frac{1}{N})'\leq \frac 12 \int\frac{|\vp_{yz}|^2}{N}} 
 \\
   &+\int \fn{P} \dv \vp_y \vp_y \\
 %  &+ \int \na \ps_y \dv \vp \fn{\vp_y} + \int \na\ps \dv \vp_y \fn{\vp_y}\\
  & -\underbrace{ 2\ep \int (P\cdot \na\ps)_y \psi_y }_
{=  2\ep \int \calP \ps_{zy} \psi_y}.
%- \ep \int (|\na \ps|^2)_y \ps_y.
\end{align*}

 \fi

%%%%%%%%%%%%%%%%%%
%\ep estimate begin
%%%%%%%%%%%55
We repeat the process in the proof of Lemma \ref{lemma1_0}.
For the $\ep$ term, %\begin{align}
%& -2\ep \int (\calP \ps_z)_z \psi_z- \ep \int (|\na \ps|^2)_z \ps_z  - 2\ep \int \calP \ps_{zy} \psi_y- \ep \int (|\na \ps|^2)_y \ps_y,
%\end{align}
 we estimate
\begin{align*}
&-2\ep \int (\calP \ps_z)_z \psi_z  - 2\ep \int \calP \ps_{zy} \psi_y=
  -2\ep \Big(\int \calP' \ps_z \psi_z
-2\ep \int \calP \ps_{zz} \psi_z 
  - 2\ep \int \calP \ps_{zy} \psi_y \Big)\\
  &\leq C\ep \|P\|_{L^\infty}\|\na\ps\|\|\na^2\ps\|+C\ep \|P'\|_{L^\infty}\|\na\ps\|^2   
     \leq C\ep  \|\na\ps\|^2+\frac{\ep}{4}\|\na^2\ps\|^2   
\end{align*} The last term is absorbed into the chemical diffusion term. 
Then we have 
\begin{align*}
&  \ddt \lr { \int \fn{\abs{\na \vp}} + \int \abs{\na \ps} } + \int   \fn{\abs{ \na^2\vp } }
+  {\ep} \int |\na^2 \psi|^2
\le  
C\|{\sqrt{N}} {\na \ps}\|^2  + C   \| \frac{\na \vp}{\sqrt{N}}\|^2 
 +C\ep \|\na \ps\|^2.
\end{align*}
For the last two terms above we use Lemma \ref{lemma0_e}.
\end{proof}

\begin{lemma}\label{lemma1_1_e} If $\la>0$ is small enough, then
we have 
 \begin{align} \label{claim1_lemma1__e}
& \int_0^t \int N |\na \ps|^2 +\ep\int_0^t \int  |\na^2 \ps|^2 \le C \lr{ \| \na \ps_0 \|^2 + \| \ps _0\|^2 + \| \vp_0\|_w^2}. 
%+ C\sqrt{M(t)} \int_0^t  \int \fn{ |\na \ps |^2}.
\end{align}
\end{lemma}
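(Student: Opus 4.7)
The plan is to mirror the proof of Lemma \ref{lemma1_1}, testing the $\vp$-equation in \eqref{eq0:linear} against $\na\ps$ to manufacture $N|\na\ps|^2$ on the left. This gives the pointwise identity
\[N|\na\ps|^2 = \vp_t\cdot\na\ps - s\vp_z\cdot\na\ps - \Del\vp\cdot\na\ps - (\dv\vp)P\cdot\na\ps.\]
I would then rewrite $\vp_t\cdot\na\ps = (\vp\cdot\na\ps)_t - \vp\cdot\na\ps_t$ and use the identity $\int\Del\vp\cdot\na\ps = \int\na(\dv\vp)\cdot\na\ps$, which holds by two integrations by parts with no curl-free assumption, exactly as in Lemma \ref{lemma1_1}.

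The only genuinely new ingredient, compared with the $\ep=0$ case, is that the $\ps$-equation now yields $\dv\vp = \ps_t - s\ps_z - \ep\Del\ps + 2\ep\,P\cdot\na\ps$. Substituting this into $\int\na(\dv\vp)\cdot\na\ps$ and integrating by parts produces
\[\int\na(\dv\vp)\cdot\na\ps = \tfrac{1}{2}\ddt\int|\na\ps|^2 + \ep\int|\na^2\ps|^2 + 2\ep\int\na(P\cdot\na\ps)\cdot\na\ps,\]
since $\int|\Del\ps|^2 = \int|\na^2\ps|^2$ (two further integrations by parts, valid by $y$-periodicity and $H^3$-decay in $z$) and $\int\pa_z|\na\ps|^2 = 0$. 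The crucial gain is the $+\ep\int|\na^2\ps|^2$ term, which, once moved to the left-hand side of the master identity, produces exactly the second left-hand contribution of the claim. The parallel substitution inside $-\vp\cdot\na\ps_t$ cancels the $-s\vp_z\cdot\na\ps$ contribution against $-s\vp\cdot\na\ps_z$, just as in Lemma \ref{lemma1_1}, leaving behind only $-\ep\int\vp\cdot\na\Del\ps$ and $+2\ep\int\vp\cdot\na(P\cdot\na\ps)$.

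All remaining terms are controlled by Young's inequality combined with Lemmas \ref{wave_propwith} and \ref{lemma0_e}. The delicate leftover
\[\bigl|-\ep\textstyle\int\vp\cdot\na\Del\ps\bigr| = \bigl|\ep\textstyle\int(\dv\vp)\Del\ps\bigr| \le \tfrac{\ep}{4}\textstyle\int|\na^2\ps|^2 + C\ep\textstyle\int|\na\vp|^2\]
is absorbed into the $\ep\int|\na^2\ps|^2$ buffer on the left, and the $P$-pieces are similarly bounded by $C\ep(\|\na\vp\|^2 + \|\na\ps\|^2)$ using $|P| + |\na P| \le M$. The quadratic piece $-\int(\dv\vp)P\cdot\na\ps$ is split by Young into $\tfrac{1}{4}\int N|\na\ps|^2 + C\|\na\vp\|_w^2$, with the first fraction absorbed on the left. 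After integrating in time, the only residual boundary term $\int\vp(t)\cdot\na\ps(t)$ is dominated by $C\|\vp(t)\|^2 + \tfrac{1}{2}\|\na\ps(t)\|^2$, and the latter is absorbed against the $\tfrac{1}{2}\ddt\int|\na\ps|^2$ that survives on the left. The leftover $\|\vp(t)\|^2$, $\int_0^t\|\na\vp\|_w^2$, and $\ep\int_0^t\|\na\ps\|^2$ are all bounded directly by the data via Lemma \ref{lemma0_e}, yielding \eqref{claim1_lemma1__e}. The main care required is bookkeeping: every $\ep$-error involving $\int|\na^2\ps|^2$ must have its coefficient kept strictly smaller than the $\ep$-gain produced in the second step, so that the chemical-dissipation control is not surrendered.
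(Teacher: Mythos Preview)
Your proposal is correct and follows essentially the same approach as the paper: test the $\vp$-equation against $\na\ps$, rewrite $\vp_t\cdot\na\ps$ and $\int\Del\vp\cdot\na\ps$ exactly as in Lemma~\ref{lemma1_1}, and then substitute the $\ep>0$ $\ps$-equation to produce the new dissipation $\ep\int|\na^2\ps|^2$; the remaining $\ep$-errors are absorbed with Young's inequality and the lower-order terms are closed by Lemma~\ref{lemma0_e}. One small bookkeeping slip: after the ``parallel substitution inside $-\vp\cdot\na\ps_t$'' you should also record the term $-\int\vp\cdot\na(\dv\vp)=\int|\dv\vp|^2$, not only the two $\ep$-pieces, but this is harmless since $\int|\dv\vp|^2\le\|\na\vp\|_w^2$ is already covered by Lemma~\ref{lemma0_e}.
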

 
\begin{proof}
As in the proof of Lemma \ref{lemma1_1}, we get
 \begin{align}\label{ineq_claim1_lemma1_e}
\int N |\na \ps|^2  
=& \ddt \int \vp \cdot\na \ps - \frac 12 \ddt \int  |\na\ps|^2 \\& \nonumber + \int | \dv \vp|^2 - \int (\dv \vp)P\cdot\na\ps 
%- \int  \dv \vp | \na \ps|^2
\\ \nonumber
&+\ep\int \na( 
  \Del \ps -2  P \cdot\na \ps %-   |\na \ps|^2
)\cdot\na\ps - \ep\int\vp\cdot(   \Del \na\ps  -2 \na( P \cdot\na \ps) %-  \na(|\na \ps|^2)
 )\\& \nonumber=(I)+(II)+(III).
 \end{align}
For $(I)$ and $(II)$, we refer to the proof of Lemma \ref{lemma1_1}.
\iffalse
 Note that 
 \begin{align*}
 {\int_0^t(I)}=\int_0^t\lr{\ddt \int \vp \cdot\na \ps - \frac 12 \ddt \int  |\na\ps|^2}\leq {C(\|\vp(t)\|^2 +  \|\na\ps_0\|^2+\|\vp_0\|^2)}&
 \end{align*} by 
$ \int  \vp \na\ps \le C\| \vp \| ^2 + \frac 12   \| \na \ps\|^2$.

For the second term $(II)$, we estimate 
 \begin{align*}
  \int | \dv \vp|^2-\int (\dv \vp)P\cdot\na\ps 
 %-2s \int \vp_z \na \ps  
 & \le  C\int \fn{ | \na \vp|^2} + \frac 14 \int  N |\na \ps|^2 
  %\int  \dv \vp | \na \ps|^2 & \le C\sqrt{M(t)}    \int {| \na \ps|^2} \le C\sqrt{M(t)}    \int \fn{| \na \ps|^2}.
 \end{align*}
% by bounding $\| \dv\vp \|_{L^{\infty}}  \le C \| \dv\vp \|_{H^2}\le C \sqrt{M(t)}$. 
% $\| \na \ps \|_{L^{\infty}} \le C M(t)$.

\fi
%%%%%%%%%%%%%%%%%%
%\ep estimate begin
%%%%%%%%%%%55
For the $\ep$-term $(III)$,   we estimate \begin{align*}
 \ep \int \na(    \Del \ps -2   P \cdot\na \ps%-   |\na \ps|^2
 )\cdot\na\ps &=
-\ep\int |\na^2\ps|^2
-\ep \int \na(     2   P \cdot\na \ps
%+  |\na \ps|^2
)\cdot\na\ps\\
&\leq-\ep\|\na^2\ps\|^2 +C\ep  \|\na\ps\|^2+\frac{\ep}{4}\|\na^2\ps\|^2\\
%&\leq-\frac{3\ep}{4}\|\na^2\ps\|^2 +C\ep  \|\na\ps\|^2 
\end{align*} and 
\begin{align*}
 - \ep\int\vp\cdot(   \Del \na\ps  -2 \na( P \cdot\na \ps) 
% -  \na(|\na \ps|^2) 
 ) 
&\leq  C\ep\int|\na\vp||\na^2\ps|  + |\na\vp||\na \ps|
% +|\na\vp||\na \ps|^2
 \\
&\leq \frac\ep 4\|\na^2\ps\|^2 +C\ep  \|\na\vp\|^2+C\ep  \|\na\ps\|^2.
%+C\ep M(t)  \|\na\ps\|^2
%\\
%&\leq \frac\ep 4\|\na^2\ps\|^2 +C  \|\na\vp\|_w^2+C\ep  \|\na\ps\|^2.
\end{align*}

Integrating \eqref{ineq_claim1_lemma1_e} in time, we get
 \begin{align*} 
\int_0^t\int& N |\na \ps|^2  
%=& \int_0^t (I)+\int_0^t(II)+\int_0^t(III)\\
\leq 
C(\|\vp(t)\|^2 +\|\na\ps_0\|^2+\|\vp_0\|^2)\\
& +\int_0^t\Big(C  \|\na\vp\|_w^2+ \frac 14 \int  N |\na \ps|^2  
%+C\sqrt{M(t)}    \int \fn{| \na \ps|^2} 
- \frac\ep 2\|\na^2\ps\|^2 +C\ep  \|\na\ps\|^2\Big).
 \end{align*}

%%%%%%%%%%%%%%%%%%
%\ep estimate end
%%%%%%%%%%%55

By Lemma \ref{lemma0_e}, we have 
% \begin{align*} 
% \int_0^t \int N |\na \ps|^2 
% +\ep\int_0^t \int |\na^2 \ps|^2 
% \le& C \lr{ \| \na \ps_0 \|^2 + \| \ps _0\|^2 + \| \vp_0\|_w^2} 
%+ C\sqrt{M(t)} \int_0^t  \int \fn{ |\na \ps |^2}.
% \end{align*}
%This proves the claim 
\eqref{claim1_lemma1__e}.  
\end{proof}

\begin{lemma}\label{lemma1_2_e} If  $\la>0$  and $\ep>0$ are small enough, then
we have 
 \begin{align} 
& \int \fn {|\na \ps|^2} +   \int _0^ t \int \fn {|\na \ps|^2 }
+ \ep \int_0^t\int    \fn{|\na^2\ps|^2}
\le  C(  \| \na \ps_0 \|_w^2 + \| \ps _0\|^2 + \| \vp_0\|_{w}^2) + C\int_0^t \int \fn{ |\na^2\vp|^2}. \label{claim2_lemma1__e}
\end{align}
\end{lemma}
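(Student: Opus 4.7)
The plan is to mimic the proof of Lemma \ref{lemma1_2} but to carefully extract the new chemical diffusion term $\ep\int w|\na^2\ps|^2$ while controlling the extra contributions coming from $\ep\Del\ps$ and $-2\ep P\cdot\na\ps$ in the $\ps$-equation.

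First I apply $\na$ to the $\ps$-equation in \eqref{eq0:linear} and multiply by $w\na\ps$ to obtain the pointwise identity
\begin{align*}
\tfrac12(w|\na\ps|^2)_t-\tfrac s2(w|\na\ps|^2)_z+\tfrac s2 w'|\na\ps|^2=\ep w\na\ps\cdot\Del\na\ps-2\ep w\na\ps\cdot\na(P\cdot\na\ps)+w\na(\dv\vp)\cdot\na\ps.
\end{align*}
Since $w=w(z)$ with $|w'|\le sw$, integration by parts in space gives, for the diffusion term,
\begin{align*}
\ep\int w\na\ps\cdot\Del\na\ps=-\ep\int w|\na^2\ps|^2-\ep\int w'\na\ps\cdot\na\ps_z\le -\tfrac{3\ep}{4}\int w|\na^2\ps|^2+C\ep\int w|\na\ps|^2.
\end{align*}
For the transport correction, using $P=(\calP,0)$ and Lemma \ref{wave_propwith},
\begin{align*}
-2\ep\int w\na\ps\cdot\na(\calP\ps_z)\le C\ep\int w|\na\ps|^2+\tfrac{\ep}{4}\int w|\na^2\ps|^2.
\end{align*}
Hence the net $\ep$-contribution is bounded by $-\tfrac{\ep}{2}\int w|\na^2\ps|^2+C\ep\int w|\na\ps|^2$.

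Next I split the space integral into $z>0$ and $z<0$, relying as in Lemma \ref{lemma1_2} on the two complementary bounds $w'\ge cw$ on $\{z>0\}$ and $w\le c^{-1}N$ on $\{z\le0\}$. On $\{z>0\}$ the term $\tfrac s2 w'|\na\ps|^2$ yields a bulk dissipation $c\int_{z>0} w|\na\ps|^2$, and the boundary contribution at $z=0$ is controlled in the usual way. On $\{z<0\}$ the boundary term has the opposite sign but $w|\na\ps|^2$ is pointwise bounded by $CN|\na\ps|^2$, so the time integral on this strip is controlled via the already-established estimate \eqref{claim1_lemma1__e} from Lemma \ref{lemma1_1_e}. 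After adding the two half-strip inequalities and handling the inhomogeneity $w\na(\dv\vp)\cdot\na\ps$ by Cauchy--Schwarz as
\[\int w|\na(\dv\vp)||\na\ps|\le C\int\fn{|\na^2\vp|^2}+c_1\int N|\na\ps|^2\]
(with the $N|\na\ps|^2$ piece absorbed into \eqref{claim1_lemma1__e}), one arrives at
\[\int w|\na\ps|^2+\Big(\tfrac c2-C\ep\Big)\int_0^t\!\!\int w|\na\ps|^2+\tfrac{\ep}{2}\int_0^t\!\!\int w|\na^2\ps|^2\le\text{RHS of \eqref{claim2_lemma1__e}}.\]
Taking $\ep_0$ small enough so that $C\ep\le c/4$ gives \eqref{claim2_lemma1__e}.

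The main obstacle, as I see it, is the interaction between the weighted dissipation mechanism and the diffusion: one must verify that the new cross term $\ep\int w'\na\ps\cdot\na\ps_z$ produced by integration by parts (which would spoil the estimate if $|w'|\gg w$) is tamed by $|w'|\le sw$, and one must arrange the coefficients so that the small negative contribution $-C\ep\int w|\na\ps|^2$ is strictly dominated by the one-sided gain $c\int_{z>0}w|\na\ps|^2$ after invoking Lemma \ref{lemma1_1_e} on the $z<0$ half. Once these bookkeeping items are in place, the argument closes in exactly the same way as in the $\ep=0$ case.
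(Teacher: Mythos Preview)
Your proof is correct and follows essentially the same approach as the paper: the same pointwise identity, the same half-strip splitting using $w'\ge cw$ on $\{z>0\}$ and $w\le c^{-1}N$ on $\{z\le 0\}$, the same $\ep$-term estimate via $|w'|\le Cw$, and the same appeal to Lemma \ref{lemma1_1_e} to control $\int_0^t\int N|\na\ps|^2$, followed by taking $\ep$ small to absorb $C\ep\int w|\na\ps|^2$. The only cosmetic differences are the precise constants ($\ep/2$ versus the paper's $\ep/4$) and the order in which the $\ep$-terms and the half-strip decomposition are presented.
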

\begin{remark}
Here we used the smallness condition on $\ep$ first in order to get the weighted $L^2$-estimate for 
$\nabla \psi$ above as in Lemma \ref{lemma1_2}.
\end{remark}
\begin{proof}
First we take $\na$ to the $\ps$-equation then multiply by  $ w \na \ps$ to get
%Multiplying $ w \na \ps$ to the equation $$ \na \ps_t - s \na\ps_z -\ep \Del \na\ps = \na (\dv \vp) -2\ep \na( P \cdot\na \ps) -\ep  \na(|\na \ps|^2), $$ we have
\begin{align*} 
&\frac 12 (w |\na \ps|^2)_t - \frac s2 ( w |\na \ps|^2)_z + \frac s2 w' |\na \ps|^2 \\
&= w \na (\dv \vp) \cdot \na \ps 
+\underbrace{\ep w \na \ps\cdot\lr{\Del \na\ps -2  \na( P \cdot\na \ps) }}_{\ep\mbox{-terms}}.
\end{align*}
 \iffalse
Note that for some $c>0$
\begin{align*}
w' = se^{sz} > c w  & \quad \mbox{for } z > 0, \\
w = 1+e^{sz} \le 2
< \frac{1}{c}N 
 & \quad \mbox{for } z\le0.
\end{align*}
Integrating on each half strip %(notation : $\int_{z>0}f:=\int_0^\infty\int_{[0,\la]} f (z,y,t)dy dz$)
  and in time then summing these two,
  \fi
  
  As in the proof of Lemma \ref{lemma1_2}, we get 
 \begin{align*}
&\int w |\na \ps|^2 + \frac c2 \int_0^t \int w |\na \ps|^2 \le \int w |\na \ps_0|^2 +  \int_0^t \int N|\na\ps|^2 + 
C\int_0^t \int w |\na^2 \vp|^2  +\int_0^t \int {\ep\mbox{-terms}}\\
&\le C ( \| \na \ps_0\|_w^2 + \| \ps_0\|^2 + \| \vp_0\|_w^2)
+ C \int_0^t \int \fn{ |\na^2 \vp|^2}+\int_0^t \int {\ep\mbox{-terms}}, 
\end{align*}
where we used the previous estimate \eqref{claim1_lemma1__e}.   

%%%%%
%\ep begins
%%
For the $\ep ${-terms},  we estimate
\begin{align*}
&\int {\ep \mbox{-terms}}=\ep\int { w \na \ps\cdot\lr{\Del \na\ps -2  \na( P \cdot\na \ps)      }}\\
%&=\ep\int {w \na \ps\cdot \Del \na\ps + w \na \ps\cdot\lr{  -2  \na( P \cdot\na \ps) -   \na(|\na \ps|^2)}}\\
&=\ep\int \Big({ -w|\na^2\ps|^2 -w' \na \ps\cdot  \na\ps_z
- w \na \ps\cdot\lr{  2  \na( P \cdot\na \ps)} 
}\Big)\\
&\leq-\ep\int {  w|\na^2\ps|^2 +C\ep\int \Big( w |\na \ps|| \na^2\ps| + w| \na \ps|( |\na^2\ps|+|\na\ps| )
  } \Big)\\
&\leq-\frac \ep 4\int   w|\na^2\ps|^2 +C\ep\int w |\na \ps|^2 
  %\leq-\frac \ep 4\int   w|\na^2\ps|^2 +C\ep\int w |\na \ps|^2 
%  \\& \leq-\frac \ep 4\int   w|\na^2\ps|^2 +\frac{1}{2}\int w |\na \ps|^2 
\end{align*} where we used the estimate $|w'|\leq C|w|$. % and for the last inequality, we assumed %$\ep$ and
 %$M(t)$ small enough.

%%%%%
%\ep ends
%%%%
Plugging this estimate, we have 
\begin{align*}
&\int w |\na \ps|^2 +  (\frac{c}{2}-C\ep
%+\sqrt{M(t)}
) \int_0^t \int w |\na \ps|^2 +\frac{ \ep}{4} \int_0^t\int   w|\na^2\ps|^2 \\
&\le C ( \| \na \ps_0\|_w^2 + \| \ps_0\|^2 + \| \vp_0\|_w^2) %+ C\sqrt{M(t)} \int_0^t \int \fn{ |\na \ps|^2}
+ C \int_0^t \int \fn{ |\na^2 \vp|^2}.  
\end{align*} Then, by making $\ep$ small enough, it proves the estimate \eqref{claim2_lemma1__e}.

\end{proof}

 % a closed energy estimate up to the first order derivatives:
 By adding all the above lemmas, we get
\begin{lemma}\label{lemma01__e} If  $\la>0$  and $\ep>0$ are small enough, then
we have 
  \begin{equation}\begin{split}\label{eq_lemma01__e}
\| \vp\|_{1,w}^2 + \| \ps\|^2    + \| \na \ps\|_w^2+  \int_0^t  \sum_{l = 1,2} \| \na^{l} \vp\|_w^2
+\int_0^t   \| \na \ps\|_w^2
+\ep\int_0^t   \| \na^{2} \ps\|_w^2
\\
\quad  \le
 C  ( \|  \vp_0\|_{1,w}^2  +
 \| \na \ps_0\|_w^2+ \| \ps_0\|^2). 
\end{split}\end{equation}
\end{lemma}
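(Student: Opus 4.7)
The plan is to chain together the four preceding lemmas \ref{lemma0_e}, \ref{lemma1_0_e}, \ref{lemma1_1_e}, and \ref{lemma1_2_e}, following the template used in the proof of Lemma \ref{lemma01_} for the $\ep=0$ case. Because the system \eqref{eq0:linear} is linear, none of the $\sqrt{M(t)}$ factors that appeared in the $\ep=0$ argument are present here, so no smallness of $M(t)$ is required; the only new bookkeeping concerns the additional dissipative contributions $\ep\int_0^t\|\na^2\ps\|^2$ and $\ep\int_0^t\|\na^2\ps\|_w^2$ coming from the chemical diffusion.

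First I would insert \eqref{claim1_lemma1__e} into \eqref{ineq:lem1__e}. The term $C\int_0^t\int N|\na\ps|^2$ appearing on the right-hand side of \eqref{ineq:lem1__e} is precisely what \eqref{claim1_lemma1__e} controls, and the substitution produces the closed first-order estimate
\begin{equation*}
\|\na\vp\|_w^2 + \|\na\ps\|^2 + \int_0^t\|\na^2\vp\|_w^2 + \ep\int_0^t\|\na^2\ps\|^2 \le C\bigl(\|\vp_0\|_{1,w}^2 + \|\na\ps_0\|_w^2 + \|\ps_0\|^2\bigr).
\end{equation*}

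Next I would feed the resulting bound on $\int_0^t\|\na^2\vp\|_w^2$ into \eqref{claim2_lemma1__e}, which then closes the one-sided weighted estimate for $\na\ps$:
\begin{equation*}
\int\fn{|\na\ps|^2} + \int_0^t\int\fn{|\na\ps|^2} + \ep\int_0^t\int\fn{|\na^2\ps|^2} \le C\bigl(\|\vp_0\|_{1,w}^2 + \|\na\ps_0\|_w^2 + \|\ps_0\|^2\bigr).
\end{equation*}
Adding \eqref{eq_lemma0_e} from Lemma \ref{lemma0_e} to the two displays above, and using the equivalence $1/N \sim w$ from Lemma \ref{wave_propwith} to identify $\int \fn{|\na\ps|^2}$ with $\|\na\ps\|_w^2$ and $\ep\int_0^t\int\fn{|\na^2\ps|^2}$ with $\ep\int_0^t\|\na^2\ps\|_w^2$, yields exactly \eqref{eq_lemma01__e}.

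The only point requiring care is that the smallness of $\ep$ (used in Lemma \ref{lemma1_2_e} to absorb $C\ep\int w|\na\ps|^2$ into the diffusive gain $\tfrac{c}{2}\int w|\na\ps|^2$) and of $\la$ (used in Lemmas \ref{lemma0_e} and \ref{lemma1_0_e} through the Poincar\'e inequality enabled by the preserved zero-average conditions $\bar\vp = \bar\ps = 0$) must be chosen independently of $T$, so that every constant produced along the chain is time-uniform. Since each input lemma was already established with a time-independent constant, this is a matter of bookkeeping rather than a genuine obstacle, and I do not expect any substantially new difficulty in the argument.
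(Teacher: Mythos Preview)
Your proposal is correct and follows essentially the same approach as the paper: the paper's (suppressed) proof likewise plugs \eqref{claim1_lemma1__e} into \eqref{ineq:lem1__e} to close the first-order estimate, then feeds the resulting bound on $\int_0^t\|\na^2\vp\|_w^2$ into \eqref{claim2_lemma1__e}, and finally adds Lemma \ref{lemma0_e}. Your remarks on the absence of $\sqrt{M(t)}$ factors in the linear case and on the roles of smallness of $\ep$ and $\la$ are accurate.
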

\iffalse
\begin{proof}
Plugging the estimates \eqref{claim1_lemma1__e}  into \eqref{ineq:lem1__e}, 
we  have
 
\begin{align}\label{ineq:lem1_closed_e}
 &\| \na\vp\|_w^2 + \| \na\ps\|^2 + \int_0^t \| \na^2 \vp\|_w^2  + \ep \int_0^t\|\na^2\ps\|^2
\le  C ( \| \na \ps_0\|^2 + \| \ps_0\|^2 + \| \vp_0\|_{1,w}^2) 
\end{align}
 
In addition, from the estimate \eqref{claim2_lemma1__e} with the above estimate \eqref{ineq:lem1_closed_e}, we get
\begin{align}\label{claims_lemma1__e}
\int \fn{ |\na \ps|^2} + \int_0^t \int \fn{ |\na \ps|^2} + 
 \ep\int_0^t \int \fn{ |\na^2 \ps|^2} \le C ( \|  \vp_0\|_{1,w}^2   + \| \na \ps_0\|_w^2
+ \| \ps_0\|^2).
\end{align}
  Adding Lemma \ref{lemma0_e} and the above \eqref{ineq:lem1_closed_e} and \eqref{claims_lemma1__e}, we have \eqref{eq_lemma01__e}.

\end{proof}
\fi
 Then we can repeat this process up to the highest order: 
\begin{lemma}\label{lemma23_e}If %$M(t)>0$,
 $\la>0$  and $\ep> 0$ are small enough, then 
for $k=2, 3$ we get
\begin{align*}
&\| \na^k \vp\|^2_w + \| \na^k \ps\|^2_w +   \int_0^t  { \|\na^{k+1}\vp\|_w^2}
+ \int_0^t  { \|\na^{k}\vp\|_w^2}
+  \ep\int_0^t  { \|\na^{k+1}\ps\|_w^2}\\
&\le  C (  \| \vp_0\|_{k,w}^2+
 \| \na \ps_0\|_{k-1,w}^2+ \| \ps_0\|^2).
\end{align*}
\end{lemma}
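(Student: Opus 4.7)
My plan is to mimic the three-step architecture developed for the first-order estimate (Lemmas \ref{lemma1_0_e}--\ref{lemma01__e}) order by order, just as Lemma \ref{lemma23_} extended its first-order predecessors to $k=2,3$ in the $\ep=0$ setting. Fixing $k\in\{2,3\}$, I would apply $\ipa\jpa$ with $i+j=k$ to \eqref{eq0:linear}, test the $\vp$-equation against $\fn{\ipa\jpa\vp}$ and the $\psi$-equation against $\ipa\jpa\psi$, and add. As in Lemma \ref{lemma1_0_e}, the resulting identity produces the dissipation $\int\fn{|\na^{k+1}\vp|^2}+\ep\int|\na^{k+1}\psi|^2$ on the left, a commutator term from $[\ipa\jpa,N]\na\psi$ that is controlled by $\int N|\na^k\psi|^2$ plus lower-order pieces, quadratic $P$-terms bounded through Lemma \ref{wave_propwith} together with integration by parts that shifts a weight derivative $(\fn{1})'$ at the cost of $|(1/N)'|\le C/N$, and $\ep$-commutators from $2\ep\dv(\calP\na\psi)$ which after one integration by parts reduce to $\ep\|\na^{\le k+1}\psi\|\,\|\na^{\le k}\psi\|$ and are absorbed into the chemical dissipation by Young's inequality.

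Next, to close the $\int_0^t\!\int N|\na^k\psi|^2$ appearing on the right, I would extend Lemma \ref{lemma1_1_e}: for $k=2$ test $\Del\psi$ against $\dv(\vp\text{-eq})$ and for $k=3$ test $D\Del\psi$ against $D\dv(\vp\text{-eq})$ with $D\in\{\pa_z,\pa_y\}$, exactly in the style of the $\ep=0$ estimate \eqref{claim3_lemma23_}. The crucial structure is that $\dv\vp$ reappears as $\ps_t-s\ps_z-\ep\Del\ps+(\ep\text{-tail})$ via the $\ps$-equation, so the test produces a time derivative $\tfrac12\ddt\|D\Del\psi\|^2$, a good term $\|\na D\dv\vp\|^2$, and controllable $\ep$-corrections. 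Combined with the pointwise comparison $\int N|\na^k\psi|^2\le C\int N|\na^{k-2}\Del\psi|^2+\text{l.o.t.}$ used in \eqref{la_0}--\eqref{la_1} (applied to $\na\psi$ in place of $\psi$ for $k=3$), this gives control of $\int_0^t\!\int N|\na^k\psi|^2$ by data, a small $\ep$ tail, and $\int_0^t\|\na^{\le k}\vp\|_w^2$, the latter bounded by the previously established lower-order estimate.

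The third step is the one-sided weighted estimate à la Lemma \ref{lemma1_2_e}: I apply $\na^k$ to the $\psi$-equation, multiply by $w\na^k\psi$, and split the integral over $\{z>0\}$ and $\{z<0\}$. On the positive half-strip the bound $w'\ge cw$ gives a dissipation $c\int w|\na^k\psi|^2$, while on the negative half-strip $w\le(1/c)N$ lets me replace $\int_{z<0}w|\na^k\psi|^2$ by a $\int N|\na^k\psi|^2$ term controlled by Step (b). An additional dissipation $\ep\int w|\na^{k+1}\psi|^2$ is extracted from the $\ep\Del\psi$ piece by integration by parts, with the resulting boundary-like $\ep\int w'\na^k\psi\cdot\na^k\psi_z$ handled via $|w'|\le Cw$ and absorbed thanks to smallness of $\ep$. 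Summing all three steps at order $k$ and taking $\ep$ and $\la$ small yields the claimed bound; the estimate at order $k=3$ uses the order $k=2$ result as input.

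\textbf{Main obstacle.} The only subtle point is that Lemma \ref{wave_propwith} provides pointwise bounds on $\calP^{(j)}$ and $N^{(j)}$ only for $j\le 2$, whereas the naive expansion of $\na^3(\calP\na\psi)$ or of the commutator $[\na^3,N]\na\psi$ in the $k=3$ case produces a $\calP'''$ or $N'''$ term. I would circumvent this exactly as in the $\ep=0$ proof of Lemma \ref{lemma23_} for $k=3$: integrate by parts once to transfer the extra derivative onto $\vp$, $\psi$, or the weight, reducing to second-order coefficient bounds that are controlled, and then absorb the resulting top-order pieces into either $\int_0^t\int\fn{|\na^4\vp|^2}$ or $\ep\int_0^t\int w|\na^4\psi|^2$. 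Because the system \eqref{eq0:linear} is linear no cubic terms arise, and therefore no smallness of $M(T)$ is required at any stage; only smallness of $\ep$ and $\la$ is used, exactly as in Lemmas \ref{lemma0_e}--\ref{lemma01__e}.
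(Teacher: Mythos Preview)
Your proposal is correct and follows essentially the same approach as the paper, which in fact only writes ``We skip the proof (refer to that of Lemma \ref{lemma23_})'' and relies on exactly the three-step architecture you describe: the higher-order analogue of \eqref{higher_} with $\ep$-terms absorbed after one integration by parts, the $\int N|\na^k\ps|^2$ estimate via $\dv$ (and $D\dv$) of the $\vp$-equation as in \eqref{claim1_lemma23_}--\eqref{la_1}, and the one-sided weighted estimate on $\na^k\ps$ as in Lemma \ref{lemma1_2_e}. Your identification of the $N''',\calP'''$ obstacle at $k=3$ and its resolution by a single integration by parts to fall back on Lemma \ref{wave_propwith} is precisely how the paper handles it in the $\ep=0$ proof of Lemma \ref{lemma23_}.
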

We skip the proof(refer to that of Lemma \ref{lemma23_}). 

 \ \\
Finally, we obtain Proposition  \ref{uniform_with}: % by adding all the above lemmas:
\begin{proof}[Proof of Proposition \ref{uniform_with}]

  Adding all the lemmas in this section,  we have\\
  \begin{equation}\begin{split}\label{lemma3_e}
&  {\sup_{s\in[0,t]}\Big(\| \vp(s)\|_{3,w}^2     + \| \na \ps(s)\|_{2,w}^2 + \| \ps(s)\|^2}\Big) + \int_0^t  \sum_{l = 1}^4 \| \na^{l} \vp\|_w^2
+\int_0^t   \sum_{l = 1}^3\| \na^l \ps\|_w^2
+\ep\int_0^t     \| \na^{4} \ps\|_w^2
\\
&   \le
 C  { ( \|  \vp_0\|_{3,w}^2  +
 \| \na \ps_0\|_{2,w}^2+ \| \ps_0\|^2)}.
\end{split}\end{equation}%  This proves Proposition  \ref{uniform_with}. 
\end{proof}
 
\begin{remark} Note that the above estimate \eqref{lemma3_e} does not contain any control of  $\|\ps\|_w$. This is not a problem when $\ep>0$. Indeed, for $\ep>0$, thanks to the mean-zero assumption  (in $y$), we can use the Poincar\'{e} inequality (see \eqref{ineq:poincare3}) to get

$$\| \ps\|^2_w=\int \frac{|\ps|^2}{N}\leq C\la^2\int \frac{|\ps_y|^2}{N}\leq \| \na \ps\|_{w}^2\leq 
C  { ( \|  \vp_0\|_{3,w}^2  +
 \| \na \ps_0\|_{2,w}^2+ \| \ps_0\|^2)}.$$  However, when $\ep=0$, just having the periodic condition (in $y$) is not enough to get such an estimate.
\end{remark}

  \section{Proof of Theorem \ref{thm_eventual}}%\label{stabilitythin}
  %Stability on the thin cylinder

Taking a $y$-derivative  in \eqref{nq} we have 
\begin{align*}
\pa_t n_y - \Del n_y & = \dv (nq)_y \\
\pa_t q_y - \ep \Del q_y & = -2\ep ( (q \cdot \na) q )_y + \na n_y.
\end{align*}
Integration by parts gives 
\begin{align}
\frac 12 \ddt \| n_y\|^2 +  \|\na n_y\|^2  & \le \| q\|_{L^{\infty}} ( \del^{-1}\| n_y\|^2 + \del \| \na n_y\|^2)+ \|n\|_{L^{\infty}} ( \del^{-1} \| q_y\|^2 + \del \|\na n_y\|^2) \label{ny}\\
\frac 12 \ddt \|q_y\|^2 + \ep \|\na q_y\|^2 &\le -2\ep\int ( (q\cdot \na )q)_y \cdot q_y + \int \na n_y \cdot q_y
\label{qy}
\end{align}
where $\del>0$ is small parameter to be chosen.
The right hand side of the second inequality is bounded by 
\[\ep \| \na q\|_{L^{\infty}}\| q_y\|^2 
+ \ep \|q\|_{L^{\infty}}( \del \| \na q_y\|^2 +\del^{-1} \| q_y\|^2) + \del\|\na n_y\|^2 + \del^{-1}\|q_y\|^2.\]
We add \eqref{ny} and \eqref{qy}.
 For sufficiently small $\del>0$, we make $\| \na n_y\|^2$ and $\epsilon\| \na q_y\|^2$  terms  of the right hand side
smaller than $ \frac 12\|\na n_y\|^2 + \frac {\ep}{2} \|\na q_y\|^2 $.
By the Poincar\'{e} inequality (in $y$) we have 
\[ \frac 12\|\na n_y\|^2 \ge \frac{\| n_y\|^2}{C\la^2}, \quad \frac 12 \|\na q_y\|^2 \ge \frac{\| q_y\|^2}{C\la^2}.\]
Then we have 
 \[ \frac 12\ddt ( \|n_y\|^2 +\| q_y\|^2) + \frac{ \|n_y\|^2 +\ep\| q_y\|^2}{C\la^2}  \]
\[\le  
C_1 \del^{-1} (\|n_y\|^2 +\| q_y\|^2) + \ep C_1 (1 +\del^{-1})\|q_y\|^2 + \del^{-1} \|q_y\|^2.\]
Thanks to $\epsilon>0$, we decrease $\la>0$ to get
\[  \ddt ( \|n_y\|^2 +\| q_y\|^2) + \frac{ \|n_y\|^2 +\ep\| q_y\|^2}{C\la^2}  \le 0.\] Thus, there exists a constant $c=c(\ep,\la)>0$ such that
\[  \ddt ( \|n_y\|^2 +\| q_y\|^2) \leq -c ({ \|n_y\|^2 +\| q_y\|^2}).\]

\section*{acknowledgement}

MC's work was supported by NRF-2015R1C1A2A01054919.
KC's work was supported by the 2015 Research Fund(1.150064.01) of UNIST(Ulsan National Institute of Science and Technology) and by NRF-2015R1D1A1A01058614.
KK's work was supported by NRF-2014R1A2A1A11051161 and NRF-20151009350.
JL's work was supported by NRF-2009-0083521.
%\color{red} have to write down 

\end{document}